\setlist[enumerate]{leftmargin=15pt}
\setlist[description]{leftmargin=10pt,topsep=1pt}
\setlist[itemize]{leftmargin=15pt}
\let\origQed\qed
\newcommand*{\resetqed}{\gdef\qed{\origQed\global\let\qed\relax}}
\let\origendproof\endproof
\def\endproof{\origendproof\resetqed}
\newtheorem{theorem}{Theorem}
\newtheorem*{conjecture}{Conjecture}
\newtheorem{lemma}[theorem]{Lemma}
\newtheorem{definition}[theorem]{Definition}
\newtheorem{corollary}[theorem]{Corollary}
\newtheorem{remark}[theorem]{Remark}
\newtheorem{strategy}{Strategy}
\newenvironment{claim}{\begin{proof}[Claim.]}{\let\qed\relax\end{proof}}
\newcommand{\ZZ}{\mathbb{Z}}
\newcommand{\NN}{\mathbb{N}}
\newcommand{\case}[3][\relax]{%
  \item[Case #2:] #3.\par%
  \edef\test{#1}%
  \if\test\relax\else%
    \leavevmode%
    \marginpar{%
      \scalebox{.8}{%
        \begin{tikzpicture}[baseline=-\baselineskip]
          \path[use as bounding box] ;
          \node[draw,anchor=north west] at (0,0) {\begin{tabular}{@{}l@{}}#1\end{tabular}};
        \end{tikzpicture}%
      }%
    }%
  \fi%
}
\journal{Theoretical Computer Science}
\begin{document}

\begin{frontmatter}

\title{On the Metric Dimension of 
$ K_a \times K_b \times K_c $}

\author[cs,math]{Valentin Gledel}
\author[math]{Gerold Jäger}

\affiliation[cs]{organization={Department of Computing Science, Umeå University},
            city={Umeå},
            country={Sweden}}
\affiliation[math]{organization={Department of Mathematics and Mathematical Statistics, Umeå University},
            city={Umeå},
            country={Sweden}}

\begin{abstract}
In this work we determine the metric dimension 
of $ K_a \times K_b \times K_c$ 
for all $a,b,c\in \NN$ with $ a \le b \le c $ as follows.
For $3a<b+c$ and $2b \le c$, this value is $c-1$,
for $3a<b+c$ and $2b > c$, it is $\left \lfloor \frac{2}{3}(b+c-1) \right \rfloor$, and for $3a=b+c$, 
it is $\left \lfloor \frac{a+b+c}{2} \right \rfloor -1 $.
The only open case is  $3a>b+c$, where two values are possible, namely
$\left \lfloor \frac{a+b+c}{2} \right \rfloor -1 $
and $\left \lfloor \frac{a+b+c}{2} \right \rfloor $.
This result extends previous results of~\cite{CHMPPSW07}, who computed the metric dimension
of $ K_a \times K_b$, and of~\cite{JD20}, who computed the metric dimension of 
$ K_a \times K_a \times K_a$\footnote{In comparison to~\cite{JD20} we 
use the notation $K_a$ instead of $\ZZ_a$, as this notation of complete graphs is more common in graph theory.} 

We prove our result by introducing and analyzing a new 
variant of Static Black-Peg Mastermind, 
in which each peg has its own permitted set of colors.
For all cases, we present strategies 
which we prove to be both feasible and optimal.
Our main result follows, as
the number of questions of these strategies is
equal to the metric dimension of $K_a \times K_b \times K_c$. 
\end{abstract}

\begin{keyword}
Metric Dimension
\sep
Graph Theory
\sep
Game Theory
\sep
Static Black-Peg Mastermind
\end{keyword}

\end{frontmatter}

\section{Introduction}

The metric dimension is
an important property of an undirected and unweighted graph $G=(V,E)$. 
It is defined as the minimum size of a set $ U \subseteq V $ so that every vertex $v \in V $
is uniquely determined by the vector of distances between $v$ and the vertices in $U$. 
The concept of metric dimension has been mentioned first 
in~\cite{Blu53} and was then independently re-introduced in~\cite{HM76,Sla75}. 
The metric dimension has been studied for several graph classes, 
see e.g.~\cite{CEJO00,FHHMS15,YKR11,RKYS15}.
Determining whether the metric dimension of a graph is smaller than or equal to a given value
is $\mathsf{NP}$-complete~\cite{GJ79,KRR96}.

In this paper, we aim to determine the metric dimension of a cross product of 
complete graphs. The following results are know for this problem.
The metric dimension  of $ K_p^{c} = K_p \times K_p \times \dots \times K_p $ 
is in $\Theta(p/{\log p})$,
if $c$ is constant~\cite{Chv83}, 
the metric dimension  of $ K_b \times K_c $  
is $ \left \lfloor \frac{2}{3}(b+c-1) \right \rfloor$ if $2b \ge c$
and $c-1$ otherwise~\cite{CHMPPSW07}, and as a special case it is 
$ \left\lceil (4a-1)/3-1 \right\rceil $ for $K_a \times K_a $~\cite{Jag16},
and finally, the metric dimension  of $ K_a \times K_a \times K_a$ is 
is $ \left \lfloor \frac{3}{2} a\right \rfloor$ for $ a \ge 2 $.


%
We make our investigation into the metric dimension of $K_a \times K_b \times K_c$' by introducing a variant of Mastermind, which is 
a famous board game invented by Meirowitz in 1970.
The original game is played by two players, the \emph{codemaker} and the
\emph{codebreaker}, where the codemaker chooses a secret
code consisting of $4$ pegs and $6$ possible colors for each peg and where the 
codebreaker must discover this code by making a sequence of guesses (or questions) 
from the set of possible secrets
until the correct secret  has been found. In each answer, the codemaker gives 
black and white pegs, one black peg for each peg of the
question which is correct in both position and color, and
one white peg for each peg which is correct only in color. 
The problem we consider extends Mastermind in the following ways.
\begin{itemize}
\item We generalize the number of pegs from $4$ to $p$.
\item We generalize the number of pegs from $6$ to $c$.
\item The codemaker gives only black pegs as answers (and thus no longer white pegs, i.e., any information regarding correct colors in incorrect positions).
\item The game is static, i.e., the codebreaker gives all questions already 
at the beginning
of the game, waits for all answers and has a final question
to give the (correct) question.
\item Each peg has its own set of colors $a$, $b$, $c$ 
for given positive integers $a$, $b$, $c$.
\end{itemize}

Almost all such extensions of Mastermind 
have already been investigated (see for example~\cite{ER63,JP09,DDST16} 
for Mastermind with $p$ pegs and $c$ colors, 
\cite{Goo09,JP11} for Black-Peg Mastermind, 
\cite{JD20,Jag16,DDST16,God03} for Static Mastermind). 

The only novel extension is the last one,
i.e., the extension of each peg to an own color set.
More concretely, we generalize Mastermind to Static Black-Peg $(a,b,c)$-Mastermind
or $(a,b,c)$-Mastermind for short, and define it as follows.
We have two players, the \emph{codemaker} and the \emph{codebreaker}.
The codemaker chooses a secret code by assigning a color to each peg, 
chosen from a set of $a$ colors for the first peg, $b$ colors for the second peg and 
$c$ colors for the third peg. 
The codebreaker must discover this code by making a sequence 
of questions of possible secrets at the beginning of the game,
where the set of secrets is the same as the set of questions.
Each answer of the codemaker consists of black pegs, 
one black peg for each peg of the
question which is correct in both position and color.
The codebreaker then receives all answers
and finally has to reply with the (not any more) secret code. 
The goal is to find the a sequence of questions with minimum cardinality 
so that the codebreaker knows the secret from the received answers.
We call this an optimal strategy
Note that it can be assumed w.l.o.g that $a \le b \le c$. To simplify the study, 
we will suppose it to be the case through this paper, unless stated otherwise.

As explained in~\cite{JD20} for the particular case of $(a,a,a)$-Mastermind, 
the cardinality of an optimal strategy 
is equal to the metric dimension of  $ K_a \times K_b \times K_c$. 
To obtain our result, we extend methods from~\cite{JD20}.
We find that optimal strategies mostly consist of questions of the following kind: on one peg
the corresponding color occurs only once over all questions, 
and on the other two pegs the colors occur exactly twice.
As result, we obtain different values for the four main cases 
$3a>b+c$, $3a=b+c$, $3a<b+c$ and $2b \le c$, $3a<b+c$ and $2b> c$.
Our results are summarized in Table~\ref{sum_tab1}


\begin{table}[ht]
    \centering
    \begin{tabular}{c}
    \begin{tikzpicture}
        \draw (-0.25,0) rectangle (12,-7);
        \draw (-0.25,-3) -- (12,-3);
        \draw (-0.25,-5) -- (12,-5);
        \draw (2.5,0) -- (2.5,-7);
        
        \draw (2.5,-1) -- (12,-1);
        \draw (2.5,-2) -- (12,-2);
        \draw (2.5,-4) -- (12,-4);
        \draw (2.5,-6) -- (12,-6);

        \node[anchor = north west] at (0,0){$3a > b+c$};
        \node[anchor = north] at (7,0){$ \left \lfloor \frac{a+b+c}{2} \right \rfloor -1  \leq  f(a,b,c) \leq \left \lfloor \frac{a+b+c}{2} \right \rfloor$};
        \node[anchor = north] at (7,-1){$ f(a,a,2a-1) = 2a -1 = \lfloor \frac{a+a+2a-1}{2}\rfloor $};
        \node[anchor = north] at (7,-2){$ f(a,a,a) = \lfloor \frac{3a}{2} \rfloor$ (result from \cite{JD20})};
        
        \node[anchor = north west] at (0,-3){$3a = b+c$};
        \node[anchor = north] at (7,-3){$(a,b,c) \neq (4,6,6) \rightarrow f(a,b,c) = \left \lfloor \frac{a+b+c}{2} \right \rfloor -1$};
        \node[anchor = north] at (7,-4){$f(4,6,6) = 8 = \left \lfloor \frac{4+6+6}{2} \right \rfloor $};
        
        \node[anchor = north west] at (0,-5){$3a  < b+c$};
        \node[anchor = north] at (7,-5){$2b \leq c \rightarrow f(a,b,c) = c-1$};
        \node[anchor = north] at (7,-6){$2b > c \rightarrow f(a,b,c) = \left \lfloor \frac{2}{3}(b+c-1) \right \rfloor$};
        
    \end{tikzpicture}
    \end{tabular}

    \caption{Summary of the results appearing in this paper.}
    \label{sum_tab1}
\end{table}

The outline of this work is as follows. We start with some notations
and preliminaries in Section~\ref{sec_prel}.
After providing some key lemmas in Section~\ref{sec_key_lemmas},
we determine the metric dimension of $K_a \times K_b \times K_c$
in the following sections and prove the results of Table~\ref{sum_tab1}.
We provide a general lower bound for the metric dimension
$ \lfloor \frac{a+b+c}{2} \rfloor -1 $
in Section~\ref{sec_lower}.
We show an upper bound for the metric dimension
$ \lfloor \frac{a+b+c}{2} \rfloor $
for the case $3a\ge b+c$ in Section~\ref{sec_ge}.
In Section~\ref{sec_aa2a}, as subcases of $3a \ge b+c$, 
we consider the cases 
of $(a,b,c) = (a,a,2a)$ and of $(a,b,c)=(a,a,2a-1)$ and show
that the metric dimension is $2a-1$.
In Section~\ref{sec_eq} we show that for the case $3a=b+c$,
the lower bound  $ \lfloor \frac{a+b+c}{2} \rfloor -1 $ is attained,
except for $(a,b,c)=(4,6,6)$.
In Section~\ref{sec_le} we show that for the case $3a<b+c$ and $2b \le c$,
the metric dimension is $c-1$
and that for the case $3a<b+c$ and $2b > c$,
the metric dimension is $ \left \lfloor \frac{2}{3}(b+c-1) \right \rfloor$.
Finally, in Section~\ref{sec_future},
we discuss our results and the remaining open case, and 
as possible future work we suggest to prove three conjectures.

\section{Notations and Preliminaries}
\label{sec_prel}

Let $\NN$ be the set of positive integers,
and for $n\in\NN$ let $[n]$ be the set $\{1,2,\dots,n\}$. 
We aim to determine the metric dimension 
of $ K_a \times K_b \times K_c$, which we denote by $ f(a,b,c) $ for
$a,b,c \in \NN $.

We define a \emph{strategy} for $(a,b,c)$-Mastermind to be a set of questions.
For a secret $ s=(s_1,s_2,s_3)$ and a question $q=(q_1,q_2,q_3) $, 
let $ g(s,q) = i$ for $ i \in \{0,1,2,3\}$, if the question~$q$ receives~$i$ black pegs for the secret~$s$. 
A strategy $Q$ is \emph{feasible} if for all secrets $s,s'$ with $s\neq s'$ there is 
a question $q\in Q$ such that $g(s,q)\neq g(s',q)$. 
Thus, a \emph{feasible strategy} is one which lets the codebreaker 
win no matter which secret the codemaker chooses, 
An \emph{optimal} strategy is one which is minimal in size.

We make use of the following terminology from \cite{JD20}.

    \begin{definition}\label{def-neighboring2}
    \begin{enumerate}
    \item[{\bf (a)}]
    Two questions $(d_1,d_2,d_3)$ and $(e_1,e_2,e_3)$ are  
    \emph{neighboring} if there is exactly one index $i\in[3]$ such that $d_i=e_i$. 
    \item[{\bf (b)}]
    Two questions $(d_1,d_2,d_3)$ and $(e_1,e_2,e_3)$ are  
    \emph{double-neighboring}, 
    if there are exactly two indices $i\in[3]$ such that $d_i=e_i$. 
    \item[{\bf (c)}] For a given strategy and $ f_1,f_2,f_3  \in \NN $,
    a question $q$ is a \emph{$(f_1,f_2,f_3)$-question},
    if for $i\in[3]$, the $i$-th color of $q$ occurs 
    $f_i$ times on the $i$-th peg (throughout the entire strategy).
    We extend this by allowing $a_i$ to be $\star$, meaning that
    it is not relevant how often the $i$-th color
    occurs on the $i$-th peg.
    \end{enumerate}
    \end{definition}
    
    \begin{definition}
    Consider a strategy for $(a,b,c)$-Mastermind. 
    The \emph{strategy graph} $ G = (V,E) $ of this strategy 
    is defined as the undirected 
    graph whose vertex set $V$ is the set of all 
    questions, and in which two vertices are adjacent if and only if they 
    are neighboring questions. If two questions are double-neighboring, then the strategy graph has multiple edges.
    \end{definition}
    
    In the following we often consider strategies 
    consisting of the following types
    of questions, each of which is named by a letter:
    \begin{center}\begin{tabular}{@{}l@{ }l@{ }l@{\qquad}l@{ }l@{ }l@{}}
     $(1,2,2)$-questions& $\rightarrow $& type $A$&$(2,1,1)$-questions& $\rightarrow $ &type $D$\\
     $(2,1,2)$-questions& $\rightarrow $& type $B$&$(1,2,1)$-questions& $\rightarrow $ &type $E$\\
     $(2,2,1)$-questions& $\rightarrow $& type $C$&$(1,1,2)$-questions& $\rightarrow $ &type $F$\\
    \end{tabular}
    \end{center}
    
    In the corresponding strategy graph each vertex is of degree $1$ or $2$.
    Thus, the graph consists of connected components each of which is either a path or a cycle. 
    
    In the following, we list the questions in a given strategy 
    as follows.
    We order the blocks in an arbitrary way.
    We start with the questions of one block, continue with
    the questions of another block until the last block.
    Each path starts with one of its extreme vertices while each cycle starts
    with an arbitrary vertex. Thus, in one block, the next question 
    is one such that the corresponding vertices are adjacent, i.e., 
    the questions are neighboring.

\section{Key lemmas}
\label{sec_key_lemmas}
    In this secion we present several lemmas which are needed for the feasibility and optinmality proofs later.
    We start with the following lemma~\cite[Lemma 3, Lemma 4(a),(b)]{JD20}. Its proof from~\cite{JD20} holds 
    for the case $a=b=c$, but it works just as well without this assumption.
    
    \begin{lemma}
    \label{lem-forbidden}
    Let $a, b, c \in \NN $. Every feasible strategy for $(a,b,c)$-Mastermind
    has the following properties:
    
    \begin{enumerate}[label=(\alph*)]
    \item
    \label{lem-forbiddena}
    There exists at most
    one color which does not occur on the first peg of any
    question.
    \\
    Analogous statements hold for the second peg
    and for the third peg.
    \item
    \label{lem-forbiddenb}
    There is at most
    one $(1,1,\star)$-question.
    \\
    Analogous statements hold for the cases of $(1,\star,1)$-questions
    and $(\star,1,1)$-questions.
    \\
    In particular,
    setting $ \star = 1 $, there can be at most
    one $(1,1,1)$-question.
    
    \item
    \label{lem-forbiddenc}
    The strategy does not contain three distinct questions one of which is
    a $(1,1,\star)$-question, one of with is a $(1,\star,1)$-question,
    and the third one of which is a $(\star,1,1)$-question.

    \item
    \label{lem-forbiddend}
    Assume that there is a color
    which does not occur on the first peg
    and a color which does not occur on the second peg.
    Then the strategy cannot contain
    a $(1,1,\star)$-question.
    \\
    Analogous statements hold for the first and the third peg,
    and for the second and the third peg.
    
    \item
    \label{lem-forbiddene}
    Assume that there is a color 
    which does not occur on the first peg
    and a color which does not occur on the second peg.
    Then the strategy cannot contain both a $(1,\star,1)$-question and 
    a $(\star,1,1)$-question. 
    \\
    Analogous statements hold for the first and the third peg
    (where the strategy cannot contain both a $(1,1,\star)$-question and
    a $(\star,1,1)$-question),
    and for the second and the third peg
    (where the strategy cannot contain both a $(1,1,\star)$-question and
    a $(1,\star,1)$-question).

    \item
    \label{lem-forbiddenf}
    If the strategy contains two  
    double-neighboring $ (2,2,1) $-questions, then it contains no 
    $ (1,1,\star) $-question, and it holds that on at least one  
    of the first two pegs all colors occur.
    \\
    Analogous questions hold for double-neighboring $ (2,1,2) $-questions
    and for two double-neighboring $ (1,2,2) $-questions.
    \item
    \label{lem-forbiddeng}
    Assume that the strategy contains a $ (2,2,1) $-question,
    a $ (2,\star,1) $-question and
    a $ (\star,2,1) $-question, 
    where the first and the second question 
    are neighboring on the first peg, 
    and the first and the third question are neighboring on the second peg. 
    Then the strategy contains no 
    $ (1,1,\star) $-question, and it holds that on at least one  
    of the first two pegs all colors occur in the questions.
    \\
    Analogous statements hold for a $ (2,1,2) $-question, a $ (\star,1,2) $-question and
    a $ (2,1,\star) $-question, 
    and for a $ (1,2,2) $-question, a $ (1,\star,2) $-question and
    a $ (1,2,\star) $-question. 
    
    \end{enumerate}
    \end{lemma}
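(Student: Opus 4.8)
The unifying principle behind every part is the feasibility criterion itself: a strategy is \emph{not} feasible exactly when there exist two distinct secrets $s\neq s'$ with $g(s,q)=g(s',q)$ for all questions $q$. For each part I would assume the forbidden configuration and then construct such an indistinguishable pair, contradicting feasibility. Designing and checking these pairs rests on two trivial but decisive observations. If a color occurs exactly once on peg $i$ --- say only in the question $\hat q$ --- then, placed in position $i$ of a secret, it yields a match at position $i$ only against $\hat q$; and if a color is absent from peg $i$, then in position $i$ it matches no question at all. Building the two secrets out of such once-occurring and absent colors lets me pin down the whole answer vector and verify $g(s,\cdot)=g(s',\cdot)$ coordinate by coordinate.

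Parts (a)--(e) are then short. For (a), two colors $x,y$ absent from peg $1$ make $s=(x,s_2,s_3)$ and $s'=(y,s_2,s_3)$ indistinguishable. For (b), two distinct $(1,1,\star)$-questions $q,q'$ satisfy $q_1\neq q'_1$ and $q_2\neq q'_2$ (each of these colors occurs once), and the swapped pair $s=(q_1,q'_2,t)$, $s'=(q'_1,q_2,t)$ works for any $t$. For (c), writing the unique colors of the $(1,1,\star)$-, $(1,\star,1)$- and $(\star,1,1)$-questions as $a_1,a_2$; $b_1,b_3$; $c_2,c_3$, the cyclic pair $s=(a_1,c_2,b_3)$, $s'=(b_1,a_2,c_3)$ gives each of these three questions exactly one match from both secrets and zero to every other (using that once-occurring colors on the same peg are pairwise distinct). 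For (d), with the $(1,1,\star)$-question $q$ and colors $x,y$ absent from pegs $1,2$, take $s=(q_1,y,t)$, $s'=(x,q_2,t)$; for (e), with a $(1,\star,1)$-question $q$, a $(\star,1,1)$-question $q'$ and the same absent colors, take $s=(q_1,y,q'_3)$, $s'=(x,q'_2,q_3)$.

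Parts (f) and (g) are the substantive cases, and I expect (g) to be the principal obstacle. I would first record the geometry the hypotheses force. In (f), two distinct double-neighboring $(2,2,1)$-questions must agree on pegs $1,2$ and differ on peg $3$ (their once-occurring peg-$3$ colors cannot coincide), so the doubled colors $q_1,q_2$ occur precisely in this pair. In (g), the neighboring conditions force $q_1=q'_1$ and $q_2=q''_2$, while the three peg-$3$ colors $q_3,q'_3,q''_3$ are pairwise distinct. Both conclusions are proved by contradiction: to establish ``all colors occur on peg $1$ or on peg $2$'' I assume an absent color $x$ on peg $1$ and an absent color $y$ on peg $2$, and to exclude a $(1,1,\star)$-question $p$ I replay the same argument with the unique colors $p_1,p_2$ in the roles of $x,y$. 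For (f) the pairs $s=(x,q_2,q_3)$, $s'=(q_1,y,q_3)$ and $s=(p_1,q_2,q_3)$, $s'=(q_1,p_2,q_3)$ are indistinguishable.

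The subtlety that makes (g) the crux is that here the doubled colors on pegs $1$ and $2$ are shared across two \emph{different} partner questions rather than a single double-neighboring pair, so a naive single swap leaves $q'$ or $q''$ unbalanced. The remedy is to route the once-occurring peg-$3$ colors $q'_3,q''_3$ to opposite secrets, taking $s=(q_1,y,q''_3)$, $s'=(x,q_2,q'_3)$ (and $s=(q_1,p_2,q''_3)$, $s'=(p_1,q_2,q'_3)$ for the $(1,1,\star)$ part); one then checks that $q$, $q'$, $q''$, the question $p$, and every remaining question each receive equal counts from $s$ and $s'$. Finally, all ``analogous statements'' follow verbatim by permuting the three pegs, since every construction above is purely combinatorial in the occurrence counts and never invokes $a\le b\le c$. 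The only real labor is this last bookkeeping in (g).
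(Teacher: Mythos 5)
Your proposal is correct: every part is verified by exhibiting a pair of secrets that receive identical answers, and the constructions (including the asymmetric routing of $q'_3,q''_3$ in part (g)) all check out. This is essentially the same approach as the proof the paper relies on — the paper does not reprove the lemma but cites Lemma~3 and Lemma~4 of~\cite{JD20}, whose arguments are exactly these indistinguishable-secret-pair constructions, extended here to unequal color sets as you do.
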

    
\begin{corollary}
\label{cor_c-1}
Let $ a,b,c \in \NN$ with $ a \le b \le c$. Then it holds that $ f(a,b,c) \ge c-1$.
\end{corollary}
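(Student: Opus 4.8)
The plan is to derive the bound directly from the counting constraint that Lemma~\ref{lem-forbidden}\ref{lem-forbiddena} imposes on the peg with the largest color set. Since $a \le b \le c$, the third peg carries the most colors, and it is there that the argument yields the strongest inequality. So the whole proof reduces to a one-line pigeonhole observation combined with part~\ref{lem-forbiddena} of that lemma.

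First I would fix an arbitrary feasible strategy $Q$ for $(a,b,c)$-Mastermind. Every question $q=(q_1,q_2,q_3)\in Q$ contributes exactly one color, namely $q_3$, to the third coordinate. Hence the set of colors that occur on the third peg throughout $Q$ has cardinality at most $|Q|$. Next I would invoke Lemma~\ref{lem-forbidden}\ref{lem-forbiddena}, applied to the third peg: since $Q$ is feasible, at most one of the $c$ available colors for the third peg can be absent from every question. Therefore at least $c-1$ distinct colors must appear on the third peg, and combining this with the previous count gives $|Q|\ge c-1$. Finally, as $f(a,b,c)$ is by definition the size of an optimal (that is, minimum-cardinality) feasible strategy, the inequality $|Q|\ge c-1$ holds for every feasible $Q$ and thus $f(a,b,c)\ge c-1$.

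There is essentially no obstacle here beyond recognizing which peg to apply the lemma to: the hypothesis $a\le b\le c$ ensures that the third peg has $c$ colors, so the bound $c-1$ it produces dominates the analogous bounds $a-1$ and $b-1$ obtained from the first and second pegs. Everything else is the immediate combination of Lemma~\ref{lem-forbidden}\ref{lem-forbiddena} with the fact that $k$ questions can expose at most $k$ colors on any single peg.
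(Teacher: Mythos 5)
Your argument is correct and is exactly the paper's proof, merely written out in more detail: the paper likewise notes that a strategy with fewer than $c-1$ questions would leave at least two colors missing on the third peg, contradicting Lemma~\ref{lem-forbidden}\ref{lem-forbiddena}. No differences worth noting.
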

\begin{proof}
This follows directly from Lemma~\ref{lem-forbidden}\ref{lem-forbiddena}, as having less questions would lead to at least two missing colors in the last peg.    
\end{proof}
    
    The next lemma shows that an increase of the colors on one peg increases
    the metric dimension not more than by $1$.
    
    \begin{lemma}
    \label{lem_increase}
     Let $a, b, c \in \NN $. Then it holds that 
     $f(a,b,c+1) \leq f(a,b,c) +1$, $f(a,b+1,c) \leq f(a,b,c) +1$, and
     $f(a+1,b,c) \leq f(a,b,c) +1$.
    \end{lemma}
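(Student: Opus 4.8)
The plan is to prove the three inequalities by a single construction together with a symmetry argument. Since the three pegs play interchangeable roles in the definition of $(a,b,c)$-Mastermind, the quantity $f$ is symmetric in its arguments, so it suffices to prove $f(a,b,c+1)\le f(a,b,c)+1$; the other two inequalities then follow by placing the new color on the first, respectively second, peg instead of the third. Let $Q$ be an optimal feasible strategy for $(a,b,c)$-Mastermind, so that $|Q|=f(a,b,c)$, and assume $Q\neq\emptyset$ (the only case with $Q=\emptyset$ is $a=b=c=1$, which is checked directly). Fix any question $q=(q_1,q_2,q_3)\in Q$ and define $Q'=Q\cup\{q^*\}$, where $q^*=(q_1,q_2,c+1)$ reuses the first two colors of $q$ but carries the new color $c+1$ on the third peg. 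Then $|Q'|=|Q|+1$, so it remains to show that $Q'$ is feasible for $(a,b,c+1)$-Mastermind, i.e. that it separates every pair of distinct secrets in $[a]\times[b]\times[c+1]$.

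I would split the verification into three cases according to the third coordinates of two distinct secrets $s,s'$ (writing $[P]\in\{0,1\}$ for the truth value of a statement $P$). First, if both third coordinates lie in $[c]$, then the answers of $s$ and $s'$ to every $q\in Q$ coincide with those in the ordinary $(a,b,c)$-game, since each such $q$ carries a third color in $[c]$; feasibility of $Q$ therefore separates them. Second, if both third coordinates equal $c+1$, say $s=(s_1,s_2,c+1)$ and $s'=(t_1,t_2,c+1)$ with $(s_1,s_2)\neq(t_1,t_2)$, I would use a \emph{shadow} argument: pick any $\gamma\in[c]$ and apply feasibility of $Q$ to the distinct $(a,b,c)$-secrets $(s_1,s_2,\gamma)$ and $(t_1,t_2,\gamma)$. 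A separating question $q\in Q$ exists; as both shadow secrets have third coordinate $\gamma$, the third-peg contributions cancel, so $q$ already distinguishes on pegs $1$ and $2$, and hence separates $s$ and $s'$ (whose third pegs contribute $0$ to every $q\in Q$). Third — the only case needing the new question — suppose $s=(s_1,s_2,c+1)$ while $s'=(t_1,t_2,t_3)$ with $t_3\in[c]$. A short computation gives $g(s,q^*)=g(s,q)+1$, since $q^*$ and $q$ agree on the first two pegs while the third peg of $q^*$ matches $s$ and that of $q$ does not; likewise $g(s',q^*)=g(s',q)-[t_3=q_3]$. If $s$ and $s'$ gave equal answers to both $q$ and $q^*$, these identities would force $g(s,q)+1=g(s',q)-[t_3=q_3]=g(s,q)-[t_3=q_3]$, i.e. $[t_3=q_3]=-1$, which is impossible; so at least one of $q,q^*$ separates them.

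The step I expect to be the crux is the second case: the added question $q^*$ is useless when both secrets already carry the new color $c+1$, so one must verify that the \emph{original} strategy already resolves every such pair using only pegs $1$ and $2$, which is precisely what the shadow argument delivers. Once the three cases are assembled, $Q'$ is feasible for $(a,b,c+1)$-Mastermind, yielding $f(a,b,c+1)\le f(a,b,c)+1$; the same construction with the new color on the first or second peg gives $f(a+1,b,c)\le f(a,b,c)+1$ and $f(a,b+1,c)\le f(a,b,c)+1$.
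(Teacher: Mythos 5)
Your proposal is correct and follows essentially the same route as the paper: the same augmentation $Q'=Q\cup\{(q_1,q_2,c+1)\}$, the same case split on the third coordinates of the two secrets, and the same cancellation/contradiction arguments in each case (the paper takes $\gamma=c$ in your ``shadow'' step, but the choice of $\gamma$ is immaterial). No substantive differences.
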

    
    \begin{proof}
    
    We first show that $f(a,b,c+1) \leq f(a,b,c) +1$.
    Let $Q$ be an optimal strategy for $(a,b,c)$-Mastermind, i.e., it has $f(a,b,c)$ questions.
    The statement holds for $a=b=c=1$ because, obviously, $f(1,1,1)=0$ and $f(1,1,2)=1$. Thus, 
    assume in the following that $\max\{a,b,c\}\ge 2$ and thus $|Q|\neq\emptyset$.
    Let $(q_1,q_2,q_3) \in Q$ be arbitrary and define $Q' = Q \cup \{(q_1,q_2,c+1)\}$. 
    We show that $Q'$ is a feasible strategy for $(a,b,c+1)$-Mastermind by showing that 
    all pairs of secrets $s=(x,y,z)$ and $s'=(x',y',z')$ are distinguished by at least one question of $Q'$.
    
    \begin{itemize}
    \item $z,z' \neq c+1$.
    
    The secrets $s$ and $s'$ are secrets of $(a,b,c)$-Mastermind and therefore must be 
    distinguished by at least one question $q \in Q\subseteq Q'$. Hence, the statement is verified.
    
    \item $z=z'=c+1$.
    
    Since $Q$ is feasible, there exists a question $q=(q'_1,q'_2,q'_3)$ of $Q$ that 
    distinguishes $(x,y,c)$ and $(x',y',c)$. 
    
We consider two subcases.
\begin{itemize}
\item $q'_3 \in [c-1]$. 

Then $g(q,(x,y,c)) = g(q,s)$ and $g(q,(x',y',c)) = g(q,s')$. 

\item $q'_3 = c$. 

Then $g(q,(x,y,c)) = 1 + g(q,s)$ and $g(q,(x',y',c)) = 1 + g(q,s')$. 
\end{itemize}

In both cases, $g(q,(x,y,c)) - g(q,(x',y',c)) = g(q,s) - g(q,s')$ and since $q$ 
distinguishes $(x,y,c)$ and $(x',y',c)$, it also distinguishes $s$ and $s'$.

\item $z= c+1$ and $z' \neq c+1$.

If $(q_1,q_2,c+1)$ distinguishes $s$ and $s'$ then there is nothing to show. Thus, assume that $g((q_1,q_2,c+1),s) = g((q_1,q_2,c+1),s') = X$ for some $X \in \{0,1,2,3\}$. We have $g((q_1,q_2,q_3),s)= X-1$ 
and $g((q_1,q_2,q_3),s') \in \{X,X+1\} $, proving that $(q_1,q_2,q_3) \in Q'$ distinguishes $s$ and $s'$.

\item $z \neq c+1$ and $z' = c+1$.

This case is analogous to the case 
$z= c+1$ and $z' \neq c+1$.
\end{itemize}

In conclusion, all possible pairs of secrets of $(a,b,c+1)$-Mastermind are indeed distinguished by $Q'$. 
Since this reasoning holds for all feasible strategies $Q$, 
$f(a,b,c+1) \leq f(a,b,c) +1$ follows.

The proofs of the statements  $f(a,b+1,c) \leq f(a,b,c) +1$ and
$f(a+1,b,c) \leq f(a,b,c) +1$ are entirely analogous.
\end{proof}

\begin{remark}
\label{rem_increase}
Given a feasible strategy $Q$ for $(a,b,c)$-Mastermind,
the proof of Lemma~\ref{lem_increase} gives a construction 
for a feasible strategy for $(a,b,c+1)$-Mastermind as follows.
Choose an arbitrary question $(q_1,q_2,q_3) \in Q$. Then
add the question $(q_1,q_2,c+1)$ to $Q$ and obtain the strategy
$Q'$ for $(a,b,c+1)$-Mastermind.

Analogously, the proof also gives constructions 
for a feasible strategy for $(a,b+1,c)$-Mastermind and for a feasible strategy
for $(a+1,b,c)$-Mastermind. 
\end{remark}

Now let us present a way to decrease the value of the number of colors of one peg 
so that we still have a feasible strategy. For this, it seems quite intuitive that we always have $f(a,b,c) \le f(a+1,b,c)$ (and similarly for the second and third peg). To prove this result, one could show how to transform any strategy 
for $(a+1,b,c)$-Mastermind into a strategy for $(a,b,c)$-Mastermind using the 
same number of questions. One way to do this would be to "change" one color in the first peg 
to another color. 
However, this does not always lead to a feasible strategy. For seeing this, consider
Table~\ref{tab_projection}, where (a) 
gives a strategy for $(3,3,3)$-Mastermind with $4$ questions and (b) the corresponding
strategy for $(2,3,3)$-Mastermind after changing the $3$ of the first peg of $q_4$ into a $2$. 
However, this does not yield a feasible strategy, as the 
secrets $(2,1,2)$ and $(1,3,3)$ are not distinguished.

\begin{table}[t]
    \centering
	\setbox9=\hbox{\begin{tabular}{c || c | c | c}
                Peg & $1$ & $2$ & $3$
                \\
                \hline
                \hline
                $q_1$ & $1$ & $1$ & $1$
                \\
                \hline
                $q_2$ & $1$ & $2$ & $2$
                \\
                \hline
                $q_3$ & $2$ & $2$ & $3$
                \\
                \hline
                $q_4$ & $2$ & $3$ & $1$
                \end{tabular}}
    \subcaptionbox[table]{Feasible strategy for $(a,b,c)$-Mastermind.}{
    		\raisebox{\dimexpr\ht9-\height}[\ht9][\dp9]{
                \begin{tabular}{c || c | c | c}
                Peg & $1$ & $2$ & $3$ 
                \\
                \hline
                \hline
                $q_1$ & $1$ & $1$ & $1$ 
                \\
                \hline
                $q_2$ & $1$ & $2$ & $2$ 
                \\
                \hline
                $q_3$ & $2$ & $2$ & $3$ 
                \\
                \hline
                $q_4$ & $3$ & $3$ & $1$
                \end{tabular}}}
    \hspace*{2cm}
	\subcaptionbox[table]{Non-feasible strategy for $(2,3,3)$-Mastermind 
 obtained from the strategy of (a).}{
                \box9}
    \caption{Strategies for $(3,3,3)$-Mastermind and $(2,3,3)$-Mastermind.} 
    \label{tab_projection}
\end{table}

In the next lemma, we provide sufficient conditions to transform a feasible strategy for $(a+1,b,c)$-Mastermind into a feasible strategy for $(a,b,c)$-Mastermind using the same number of questions.
To do so, we first need the following definition, formalizing the notion of "changing a color" used earlier.

\begin{definition}
\label{def_proj}
    Let $a \ge2$, let $Q$ be a feasible strategy for $(a,b,c)$-Master\-mind, 
    and let $i,j\in[a]$ be two distinct colors of the first peg. 
    Then we obtain a strategy $Q_1(i,j)$ for $(a-1,b,c)$-Mastermind
    by replacing each occurrence of $j$ with $i$ 
    and then decreasing each color on the first peg that is greater than~$j$ by~$1$.
    We call this strategy $Q_1(i,j)$ the \emph{projection strategy} from color $j$ to $i$
    for the first peg. 
    We define the projection strategies $Q_2(i,j)$ and $Q_3(i,j)$ for the second and third peg 
    analogously (where $b \ge 2$, $i,j\in[b]$ and $ c \ge 2 $, $i,j\in[c]$, respectively).
\end{definition}

\begin{lemma}
\label{lem_proj}
    Let $a \ge2$, let $Q$ be a feasible strategy for $(a,b,c)$-Mastermind,
    and consider distinct colors $i,j\in[a]$ of the first peg. 
    If for all $y\in[b]$ and $z\in[c]$ there exists a question $(q_1,q_2,q_3) \in Q$ 
    such that $q_1 \in \{i,j\}$,  $q_2 \neq y$ and $q_3 \neq z$,
    then the projection strategy $Q_1(i,j)$ is feasible for $(a-1,b,c)$-Mastermind.
    
    Analogous statements hold for the second peg and $(a,b-1,c)$-Mastermind, and for the third peg 
    with $(a,b,c-1)$-Mastermind.
\end{lemma}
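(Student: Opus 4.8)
The plan is to show that the projection $Q_1(i,j)$ distinguishes every pair of distinct secrets of $(a-1,b,c)$-Mastermind, using the feasibility of $Q$ together with the hypothesis. Let me think about how the projection map interacts with the answer function $g$.

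Let me set up notation. Write $\pi\colon [a]\to[a-1]$ for the map on first-peg colors induced by the projection: $\pi$ sends $j\mapsto i$, fixes every color $<j$ other than $j$, and decreases every color $>j$ by one. Each question $q=(q_1,q_2,q_3)\in Q$ becomes $\bar q=(\pi(q_1),q_2,q_3)\in Q_1(i,j)$. The key observation is that for a secret $s=(x,y,z)$ of $(a-1,b,c)$-Mastermind and a question $\bar q$, the black-peg count $g(s,\bar q)$ depends on whether the first coordinates match, i.e.\ on whether $x=\pi(q_1)$, and on the matches in the second and third coordinates. So the plan is to lift a secret $s=(x,y,z)$ over $[a-1]\times[b]\times[c]$ to secrets over $[a]\times[b]\times[c]$ via $\pi^{-1}$, and relate the answers.

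The main difficulty is that $\pi$ is not injective: the two colors $i$ and $j$ of the original game both map to the single color $i$ (i.e.\ to $\pi(i)=\pi(j)$) of the smaller game. So a secret $s=(x,y,z)$ with $x=\pi(i)=\pi(j)$ has two preimages, $(i,y,z)$ and $(j,y,z)$, which are distinguished in $Q$ but might both collapse to the same answers under $Q_1(i,j)$. This is exactly the obstruction that the hypothesis is designed to rule out, so the heart of the proof is the following case analysis. Take two distinct secrets $s=(x,y,z)$ and $s'=(x',y',z')$ of the smaller game.

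\begin{itemize}
\item If $x,x'$ are both distinct from the collapsed color $\pi(i)$, then $s$ and $s'$ have unique preimages $\tilde s,\tilde s'$ under $\pi$ in the first coordinate; these are distinct secrets of $(a,b,c)$-Mastermind, so some $q\in Q$ distinguishes them, and since $\pi(\tilde s_1)=x=s_1$ etc., the same matching pattern persists and $\bar q$ distinguishes $s,s'$.
\item The remaining, genuinely new cases are those where at least one of $x,x'$ equals $\pi(i)$. Here a preimage in the first coordinate is ambiguous ($i$ or $j$), and a naive lift can fail exactly as in the example of Table~\ref{tab_projection}. This is where I invoke the hypothesis: for the relevant $y,z$ (or $y',z'$) the hypothesis gives a question $(q_1,q_2,q_3)\in Q$ with $q_1\in\{i,j\}$, $q_2\neq y$ and $q_3\neq z$. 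After projection this becomes a question $\bar q$ whose first color is $\pi(i)$ and which scores $0$ on the second and third pegs against that secret; one then checks directly that $\bar q$ assigns different black-peg counts to $s$ and $s'$, because it registers a first-peg match for the secret whose first color is $\pi(i)$ but (owing to $q_2\neq y$, $q_3\neq z$) no spurious matches elsewhere. The point of the conditions $q_2\neq y$ and $q_3\neq z$ is precisely to guarantee that the count $g(s,\bar q)$ equals $1$ exactly when the first coordinate of $s$ is $\pi(i)$, isolating the first-peg behaviour so that collapsed and non-collapsed secrets receive different answers.
\end{itemize}

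I expect the hardest and most delicate step to be the bookkeeping in this last case: one must verify that such a ``separating'' question exists for every troublesome pair and that, after projection, its answer really does differ on $s$ and $s'$ rather than coinciding by an unlucky coincidence in the second or third peg — which is exactly what the three simultaneous conditions $q_1\in\{i,j\}$, $q_2\neq y$, $q_3\neq z$ secure. Once the first-peg case analysis is clean, the proofs for the second and third pegs follow by the evident symmetry of the statement, relabelling the roles of the coordinates, so I would dispatch them with a sentence rather than repeating the argument.
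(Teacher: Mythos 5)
Your overall route is the same as the paper's: reduce to showing that every pair of distinct secrets of the smaller game is distinguished, use feasibility of $Q$ when neither first coordinate is the collapsed colour, and invoke the hypothesis for the mixed pairs. But your case split leaves a genuine hole. Your Case~2 is ``at least one of $x,x'$ equals $\pi(i)$'', yet the argument you give there --- a question $\bar q=(\pi(i),q_2,q_3)$ that registers a first-peg match for ``the secret whose first colour is $\pi(i)$'' and nothing else --- only makes sense when \emph{exactly} one of the two secrets is collapsed. In the subcase $x=x'=\pi(i)$ with $(y,z)\neq(y',z')$, any such $\bar q$ gives a first-peg match to \emph{both} secrets, so it need not distinguish them, and the hypothesis is of no help. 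The paper treats this subcase by a different mechanism: regard $s,s'$ as secrets of the original $(a,b,c)$-game, take a question $q\in Q$ that distinguishes them there, and observe that after projection the black-peg count of \emph{each} of the two secrets changes by the same amount (by $+1$ if $q$'s first colour is the one being eliminated, by $0$ otherwise), so the projected question still distinguishes them. That uniform-shift observation is absent from your sketch, and without it the subcase is unproved.

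There is a second, smaller slip in the exactly-one-collapsed subcase. You hedge about whether to feed $(y,z)$ or $(y',z')$ into the hypothesis, and then justify the conclusion by claiming that the conditions force $g(s,\bar q)=1$ exactly when the first coordinate of $s$ is $\pi(i)$. That biconditional is false as stated: if the conditions are imposed relative to the collapsed secret, the non-collapsed secret can still collect one black peg from a coincidence on the second or third peg, giving both secrets the answer $1$. The correct use, as in the paper, is to apply the hypothesis to the second and third colours of the \emph{non-collapsed} secret $s'$: then the projected question $(\pi(i),q_2,q_3)$ scores $0$ on $s'$ and at least $1$ on $s$, which is what separates them.
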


\begin{proof}
W.l.o.g., consider the projection $Q_1(a-1,a)$, as the projection principle is 
the same for every peg, and by changing the color names, every projection for the first peg 
is analogous to a $Q_1(a-1,a)$ projection.

In the following we show that each pair of secrets $s=(x,y,z)$ and $s'=(x',y',z')$ of $(a-1,b,c)$-Mastermind 
is distinguished by at least one question of $Q_1(a-1,a)$.
As $s$ and $s'$ are also secrets of $(a,b,c)$-Mastermind, there is a 
question $q=(q_1,q_2,q_3)$ of $Q$ that distinguishes these secrets. 

If $q_1 \neq a$, then $q\in Q_1(a-1,a)$ distinguishes $s$ and $s'$. Assume therefore in the following  that $q_1 = a$. Then $q$ has been replaced by $q' = (a-1,b,c)$ in $Q_1(a-1,a)$.
We have the following four cases.

\begin{itemize}
\item $x,x' \neq a-1$. 

Then $g(q,s) = g(q',s)$ and $g(q,s') = g(q',s')$, so $s$ and $s'$ are still distinguished. 

\pagebreak[3]

\item $x=x'=a-1$. 

Then $g(q,s) = g(q',s)-1$ and $g(q,s') = g(q',s')-1$, and the difference in the number of black pegs 
remains the same. Therefore, $q'$ distinguishes $s$ and $s'$. 

\item $x=a-1$ and $x' \neq a-1$. 

Then by assumption there is a question $(q_1',q_2',q_3') \in Q$ such that $q_1' \in \{a-1,a\}$, $q_2' \neq y'$ and 
$q_3' \neq z'$. Therefore, the question $(a-1,q_2',q_3')$ is in $Q_1(a-1,a)$. 
For this question it holds that $g((a-1,q_2',q_3'),s) \ge 1$ and $g((a-1,q_2',q_3'),s') = 0$. 
Therefore, it distinguishes $s$ and $s'$. 

\item $x \neq a-1$ and $x' = a-1$. 

This case is analogous to the case 
$x=a-1$ and $x' \neq a-1$. 

\end{itemize}

In conclusion, each pair of secrets of $(a-1,b,c)$-Mastermind is distinguished by $Q_1(a-1,a)$.

The proofs of the two other projection strategies work analogously.
\end{proof}

\begin{corollary}
\label{cor_proj}
If there exists an optimal strategy $Q$ and colors $i,j$ which fulfill the conditions 
of Lemma~\ref{lem_proj}, then it holds that
$f(a-1,b,c) \le f(a,b,c)$ (and analogous for the second and the third peg).
\end{corollary}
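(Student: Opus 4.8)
The plan is to derive Corollary~\ref{cor_proj} directly from Lemma~\ref{lem_proj}. The key observation is that the inequality $f(a-1,b,c) \le f(a,b,c)$ amounts to exhibiting \emph{some} feasible strategy for $(a-1,b,c)$-Mastermind whose size equals $f(a,b,c)$, since $f(a-1,b,c)$ is by definition the minimum size over all feasible strategies. Thus I would begin by taking the hypothesized optimal strategy $Q$ for $(a,b,c)$-Mastermind, which by optimality has exactly $|Q| = f(a,b,c)$ questions, together with the colors $i,j \in [a]$ that are assumed to satisfy the hypotheses of Lemma~\ref{lem_proj}.

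Next I would apply Lemma~\ref{lem_proj} to $Q$ and the pair $(i,j)$. Since the conditions of the lemma are assumed to hold, the projection strategy $Q_1(i,j)$ is feasible for $(a-1,b,c)$-Mastermind. The crucial quantitative point is that projection does not create or delete questions: by Definition~\ref{def_proj}, $Q_1(i,j)$ is obtained by \emph{relabeling} colors on the first peg (replacing each $j$ by $i$ and shifting larger colors down by one), which is a map applied to each existing question. Hence $|Q_1(i,j)| \le |Q|$, and in fact equality holds unless the relabeling causes two formerly distinct questions to coincide; in either case we obtain a feasible strategy of size at most $|Q| = f(a,b,c)$.

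Combining these, $Q_1(i,j)$ is a feasible strategy for $(a-1,b,c)$-Mastermind with at most $f(a,b,c)$ questions. Therefore the metric dimension $f(a-1,b,c)$, being the minimum size of a feasible strategy, satisfies $f(a-1,b,c) \le |Q_1(i,j)| \le f(a,b,c)$. The analogous statements for the second and third pegs follow identically, invoking the corresponding parts of Lemma~\ref{lem_proj} and the projection strategies $Q_2(i,j)$ and $Q_3(i,j)$ from Definition~\ref{def_proj}.

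I do not expect a genuine obstacle here, as the corollary is essentially a restatement of the lemma's conclusion in terms of the optimization quantity $f$. The one subtlety worth flagging explicitly is the possibility that two distinct questions of $Q$ are identified by the projection, so that $|Q_1(i,j)|$ is strictly smaller than $|Q|$; this only strengthens the inequality and poses no difficulty, but it is the single point where I would be careful to write $\le$ rather than $=$.
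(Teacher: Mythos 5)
Your proposal is correct and matches the paper's (implicit) argument: the paper states this as an immediate corollary of Lemma~\ref{lem_proj}, and your derivation --- projecting an optimal strategy of size $f(a,b,c)$ to obtain a feasible strategy for $(a-1,b,c)$-Mastermind of at most that size --- is exactly the intended reasoning. Your remark that the projection could identify two questions and thus only strengthen the inequality is a correct and careful observation.
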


In the remainder of this paper, we will introduce several strategies using
questions of type A, B, C.
We will use the following lemma to prove the feasibility of these strategies.
To do so, we provide a list of patterns such that strategies avoiding all these
patterns are feasible.

Let $T \in \{A,B,C\}$ be a type of question. The \emph{single peg} of $T$ is the peg that contains the color occurring only in the question of type $T$,
and the two other pegs are the \emph{double pegs} of $T$. 
For example, the single peg of a question of type $A$ is the first peg and the other 
two pegs are its double pegs. 

We say that a set of $k$ questions is \emph{a sequence of 
type $T_1T_2 \ldots T_k$} if these questions can be arranged 
in a sequence $q_1,q_2,\dots,q_k$ such that
\begin{description}
\item[(a)] $q_i$ and $q_{i+1}$ are neighboring to each other for all $i\in[k-1]$, and
\item[(b)] $q_i$ is of type $T_i\in\{A,B,C\}$ for all $i\in[k]$.
\end{description}

\begin{lemma}
\label{lem:super_lemma}
    Let a set $Q$ of questions contain only questions of type $A$, $B$ and $C$ and avoid the following patterns: 
    \renewcommand{\labelenumi}{\theenumi)}
    \begin{description}
        \item[(1)] two colors missing on the same peg,
        \item[(2)] a missing color on each peg,
        \item[(3)] a sequence of type $TTT$ and missing colors on both of the double pegs of $T$.
        \item[(4)] a sequence of type $TTT$ and a pair of double-neighboring $TT$,
        \item[(5)] a sequence of type $T_1T_1T_2T_2$ ($T_1 \neq T_2$) with a missing color on the peg that is double for both $T_1$ and $T_2$,
        \item[(6)] a sequence of type $T_1T_1T_2T_3T_3$, where all three types distinct,
        \item[(7)] two different sequences of the same type $T_1T_2$ ($T_1 \neq T_2$),
        \item[(8)] a sequence of type $T_1T_2$ ($T_1 \neq T_2$), a sequence of type $T_2T_2$ of double-neighboring questions, and a missing color on the single peg of $T_2$,
        \item[(9)] a sequence of type $TT$ of double-neighboring questions, with missing colors on both the double pegs of $T$,
        \item[(10)] two different sequences of type $TT$ of double-neighboring questions,
        \item[(11)] two different sequences of types $T_1T_1$ and $T_2T_2$, both of these consisting of double-neighboring questions, and missing colors on the single pegs of both $T_1$ and $T_2$, and
        \item[(12)] sequences of types $AA$, $BB$, and $CC$ of double-neighboring questions.
    \end{description}
    Then $Q$ is a feasible strategy.
\end{lemma}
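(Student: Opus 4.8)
The plan is to prove feasibility directly by the definition: for an arbitrary pair of distinct secrets $s=(x_1,x_2,x_3)$ and $s'=(x_1',x_2',x_3')$, I must exhibit a question $q\in Q$ with $g(s,q)\neq g(s',q)$. Since $s\neq s'$, they differ on at least one peg, and I would split into cases by \emph{how many} pegs they differ on: three, two, or one. The forbidden patterns (1)--(12) are precisely the obstructions that make one of these cases fail, so the proof is really a verification that avoiding them leaves enough distinguishing questions in each case.

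First I would handle the easy structural facts. A question $q$ of type $A$ is a $(1,2,2)$-question, so its first color appears only in $q$; thus $q$ gives a black peg on peg~1 to a unique secret value, while its peg-2 and peg-3 colors each appear in exactly one \emph{other} question. The key computational observation is that $g(s,q)$ counts matches, and the \emph{difference} $g(s,q)-g(s',q)$ decomposes peg-by-peg: peg $i$ contributes $[q_i=x_i]-[q_i=x_i']$, which is nonzero exactly when $q$ matches one of $s,s'$ but not the other on peg $i$. So I want, across the whole pattern-avoidance hypothesis, to guarantee that the per-peg contributions cannot always cancel.

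The main work is the case analysis. When $s,s'$ differ on all three pegs, a single question matching $s$ (or $s'$) on one peg and neither on the others already distinguishes them; the existence of such a question follows from patterns (1) and (2) being absent (few missing colors means most color-triples are "covered"). When they differ on exactly two pegs, say agree on peg $i$, I would look at questions whose single or double pegs align with peg $i$: here is where the \emph{sequence} patterns (3)--(6), (12) enter, because a run of type $TTT$ or the combinations $T_1T_1T_2T_2$, $T_1T_1T_2T_3T_3$, and the triple $AA,BB,CC$ are exactly the configurations in which all relevant questions could give equal black-peg counts to both secrets. The subtlest case is when $s,s'$ differ on exactly one peg: then only the double-occurrence structure of the colors on that peg can separate them, and this is governed by the double-neighboring patterns (7)--(11). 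I expect the bulk of the argument to be translating each "cannot distinguish" scenario into one of the listed patterns and checking the translation is tight.

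The hard part will be the single-peg-difference case together with patterns (7)--(11), since there the distinguishing power comes only from how the two colors $x_i,x_i'$ are shared across questions, and the cancellation analysis must track the multiplicities (which are $1$ or $2$) and the neighboring/double-neighboring adjacencies simultaneously. I would organize this by fixing the differing peg, enumerating which questions contain $x_i$ or $x_i'$ on that peg, and arguing that if none distinguishes then the questions containing these colors form one of the forbidden double-neighboring sequences. I would also lean on Lemma~\ref{lem-forbidden}, especially parts \ref{lem-forbiddenf} and \ref{lem-forbiddeng}, to rule out the degenerate sub-configurations, rather than re-deriving them. Throughout, I would present each case with the explicit witness question and a one-line computation of $g(s,q)-g(s',q)$, deferring the routine sign-checking.
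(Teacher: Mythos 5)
Your high-level plan---split by the number of pegs on which the two secrets differ---is a legitimate starting point, but you have assigned the difficulty, and the patterns, to the wrong cases, and the resulting plan does not close. If $s$ and $s'$ differ on exactly one peg, say the first, then for every question $q$ we have $g(s,q)-g(s',q)=[q_1=x_1]-[q_1=x_1']$, so \emph{any} question containing one of the two differing colors on that peg distinguishes them, and such a question exists by pattern~(1) alone. This case is trivial and needs none of the double-neighboring patterns (7)--(11). Conversely, your claim that in the all-three-pegs-differ case ``the existence of such a question follows from patterns (1) and (2) being absent'' is false, and this is precisely where all the work of the lemma lives: patterns (1) and (2) only guarantee that each secret color is covered somewhere, not that some question matches one secret on a peg while avoiding the other secret on the remaining two pegs. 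A question containing $x_1$ can fail to distinguish by also containing $y_2$ or $z_2$, the follow-up question forced by the type structure can fail in the same way, and ruling out an unbounded chain of such cancellations is exactly what patterns (3)--(12) are for.

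The paper's proof does not decompose by the number of differing pegs at all. It first shows that the problematic configurations reduce to the case $x_1\neq x_2$, $y_1\neq y_2$, $z_1\neq z_2$ together with a question $q_1=(x_1,y_2,\star)$, proves a small claim characterizing exactly which questions fail to distinguish two secrets differing on all three pegs, and then runs a forced-chain argument organized by the \emph{types} of the successive questions $q_1,q_2,q_3,\dots$: each color occurring twice forces a further question, whose form is pinned down by the claim, until either a distinguishing question appears or one of the twelve patterns is assembled. Every one of patterns (3) through (12) is invoked in some branch of that chain. Your proposal defers this entire chain to ``translating each cannot-distinguish scenario into one of the listed patterns,'' which is the whole content of the proof, and the one piece of the translation you do commit to (that (7)--(11) govern the single-peg case and that (1)--(2) suffice for the three-peg case) is incorrect. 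The reliance on Lemma~\ref{lem-forbidden}\ref{lem-forbiddenf},\ref{lem-forbiddeng} is also misplaced: those concern feasibility of arbitrary strategies and are not used in the paper's proof of this lemma, which is self-contained.
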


\begin{proof}
Let $Q$ be a strategy which consists only of questions of type $A$, $B$ or $C$ 
and which avoids all the listed patterns. 
Furthermore, let $s_1 = (x_1,y_1,z_1)$ and $s_2 = (x_2,y_2,z_2)$ be two secrets with $s_1 \neq s_2$. 
We show that $Q$ distinguishes $s_1$ and $s_2$.

First, let us prove a claim that will simplify the proof.

\begin{claim}
Let $x_1 \neq x_2$, $y_1 \neq y_2$, $z_1 \neq z_2$.
\begin{description}
\item[(a)]
Then the following are the only questions which do not distinguish $s_1$ and $s_2$:
\begin{description}
\item[(i)] questions $q$ with $g(q,s_1)=g(q,s_2)=0$, i.e., 
which contain neither $x_1$ nor $x_2$ on the first peg, 
neither $y_1$ nor $y_2$ on the second peg, 
neither $z_1$ nor $z_2$ on the third peg, ]
\item[(ii)] questions $ (x_i,y_j,\star)$, $(x_i,\star,z_j)$, $(\star,y_i,z_j)$ for $ i,j \in [2]$, $ i \neq j$,
where ``$\star$'' stands for a color different from the
colors of the two secrets on this peg. 
\end{description}

\item[(b)]
In particular, $ (x_i,y_j,z_k)$ for $ i,j,k \in [2]$ distinguishes $s_1$ and $s_2$.

\item[(c)]
In particular, a question distinguishes $s_1$ and $s_2$, if the color on one peg is
the color of $s_1$ (resp. $s_2$), and the colors on the two other pegs
differ from the colors of $s_2$ (resp. $s_1$).
\end{description}
\end{claim}

\begin{proof}[Proof (Claim).]
\begin{description}
\item[(a)]
As $x_1 \neq x_2$, $y_1 \neq y_2$, $z_1 \neq z_2$ hold, $g(q,s_1)=g(q,s_2)>0$ implies $g(q,s_1)=g(q,s_2)=1$. 
Then the color on one peg of this question is only equal 
to the color of $s_1$, 
the color on another peg is only equal 
to the color of $s_2$. 
and the color on the remaining peg 
is different from both colors of $s_1$ and $s_2$ on this peg,
\item[(b)] This follows, as these questions are not listed in (a).
\item[(c)] This follows, as these questions are not listed in (a).
\end{description}
\let\qed\relax\end{proof}

\bigskip

Since $s_1 \neq s_2$ holds, $s_1$ and $s_2$ differ in at least one peg. 
W.l.o.g., let $ x_1 \neq x_2 $. As $Q$ avoids pattern 1, there is a question
$q_1$ containing $x_1$ or $x_2$ on the first peg, say $ q_1 = (x_1,\star,\star)$. 

If $q_1$ contains neither $y_2$ on the second nor $z_2$ on the third peg, then it distinguishes $s_1$ and $s_2$. 
Thus, we can assume w.l.o.g. that $q_1$ contains $y_2$ on the second peg. 
Moreover, we can also assume that $y_1 \neq y_2$. This is because $y_1=y_2$ implies that $q_1$ contains $z_2$ on the third peg and $z_1\neq z_2$ and we could let the third peg take this role.
In summary, assume in the following that $Q$ contains a question $q_1 = (x_1,y_2, \star)$ with 
$x_1 \neq x_2$ and  $y_1 \neq y_2$. 

We consider different cases, depending on the type of $q_1$. 
To follow the proof more easily, Figure~\ref{fig:cases_super_lemma} shows the state of the questions at the start of each subcase of the proof. This information is also repeated in the margin at the beginning of each subcase.

\begin{figure}[H]
    \resizebox{\linewidth}{!}{
    \begin{tikzpicture}
    
        \node[draw, align = left] (c1) at (0,0) {
        \textbf{Case 1}\\
        $q_1\colon (x_1,y_2,\star)$ - $A$
        };

        \node[draw, align = left] (c11) at (-3.75,-2) {
        \textbf{Case 1.1}\\
        $q_1\colon (x_1,y_2,\star)$ - $A$\\
        $q_2\colon (\star,y_2,z_1)$ - $A$
        };
        \draw[->] (c1) -- (c11);

        \node[draw, align = left] (c111) at (-6,-4.5) {
        \textbf{Case 1.1.1}\\
        $q_1\colon (x_1,y_2,\star)$ - $A$\\
        $q_2\colon (\star,y_2,z_1)$ - $A$\\
        $q_3\colon (x_2,\star,z_1)$ - $A$
        };
        \draw[->] (c11) -- (c111);
        
        \node[draw, align = left] (c112) at (-2.5,-4.5) {
        \textbf{Case 1.1.2}\\
        $q_1\colon (x_1,y_2,\star)$ - $A$\\
        $q_2\colon (\star,y_2,z_1)$ - $A$\\
        $q_3\colon (x_2,\star,z_1)$ - $B$
        };
        \draw[->] (c11) -- (c112);

        \node[draw, align = left] (c12) at (4.5,-2) {
        \textbf{Case 1.2}\\
        $q_1\colon (x_1,y_2,\star)$ - $A$\\
        $q_2\colon (\star,y_2,z_1)$ - $C$
        };
        \draw[->] (c1) -- (c12);        

        \node[draw, align = left] (c121) at (1,-4.5) {
        \textbf{Case 1.2.1}\\
        $q_1\colon (x_1,y_2,\star)$ - $A$\\
        $q_2\colon (\star,y_2,z_1)$ - $C$\\
        $q_3\colon (x_2,y_1,\star)$ - $A$
        };
        \draw[->] (c12) -- (c121);
        
        \node[draw, align = left] (c122) at (4.5,-4.5) {
        \textbf{Case 1.2.2}\\
        $q_1\colon (x_1,y_2,\star)$ - $A$\\
        $q_2\colon (\star,y_2,z_1)$ - $C$\\
        $q_3\colon (x_2,y_1,\star)$ - $B$
        };
        \draw[->] (c12) -- (c122);

        \node[draw, align = left] (c123) at (8,-4.5) {
        \textbf{Case 1.2.3}\\
        $q_1\colon (x_1,y_2,\star)$ - $A$\\
        $q_2\colon (\star,y_2,z_1)$ - $C$\\
        $q_3\colon (x_2,y_1,\star)$ - $C$
        };
        \draw[->] (c12) -- (c123);
        
        \node[draw, align = left] (c1121) at (-4.5,-7.5) {
        \textbf{Case 1.1.2.1}\\
        $q_1\colon (x_1,y_2,\star)$ - $A$\\
        $q_2\colon (\star,y_2,z_1)$ - $A$\\
        $q_3\colon (x_2,\star,z_1)$ - $B$ \\
        $q_4\colon (x_2,y_1,\star)$ - $B$ 
        };
        \draw[->] (c112) -- (c1121);

        \node[draw, align = left] (c1122) at (-0.5,-7.5) {
        \textbf{Case 1.1.2.2}\\
        $q_1\colon (x_1,y_2,\star)$ - $A$\\
        $q_2\colon (\star,y_2,z_1)$ - $A$\\
        $q_3\colon (x_2,\star,z_1)$ - $B$ \\
        $q_4\colon (x_2,y_1,\star)$ - $C$ 
        };
        \draw[->] (c112) -- (c1122);

        \node[draw, align = left] (c2) at (0,-10) {
        \textbf{Case 3}\\
        $q_1\colon (x_1,y_2,\star)$ - $C$
        };

        \node[draw, align = left] (c21) at (-4,-12) {
        \textbf{Case 3.1}\\
        $q_1\colon (x_1,y_2,\star)$ - $C$\\
        $q_2\colon (x_1,\star,\star)$\\
        $q_3\colon (\star, y_2, \star)$
        };
        \draw[->] (c2) -- (c21);

        \node[draw, align = left] (c22) at (4,-12) {
        \textbf{Case 3.2}\\
        $q_1\colon (x_1,y_2,\star)$ - $C$\\
        $q_2\colon (x_1,y_2,\star)$ - $C$
        };
        \draw[->] (c2) -- (c22);        

        \node[draw, align = left] (c221) at (2,-14.5) {
        \textbf{Case 3.2.1}\\
        $q_1\colon (x_1,y_2,\star)$ - $C$\\
        $q_2\colon (x_1,y_2,\star)$ - $C$\\
        $q_3\colon (x_2, y_1, \star)$
        };
        \draw[->] (c22) -- (c221);
        
        \node[draw, align = left] (c222) at (6,-14.5) {
        \textbf{Case 3.2.2}\\
        $q_1\colon (x_1,y_2,\star)$ - $C$\\
        $q_2\colon (x_1,y_2,\star)$ - $C$\\
        $q_3\colon (x_2,\star , z_1)$
        };
        \draw[->] (c22) -- (c222);

    \end{tikzpicture}
    }
    \caption{The questions and the colors of the secrets $s_1$ and $s_2$ at the start of each subcase of Lemma~\ref{lem:super_lemma}}
    \label{fig:cases_super_lemma}
\end{figure}
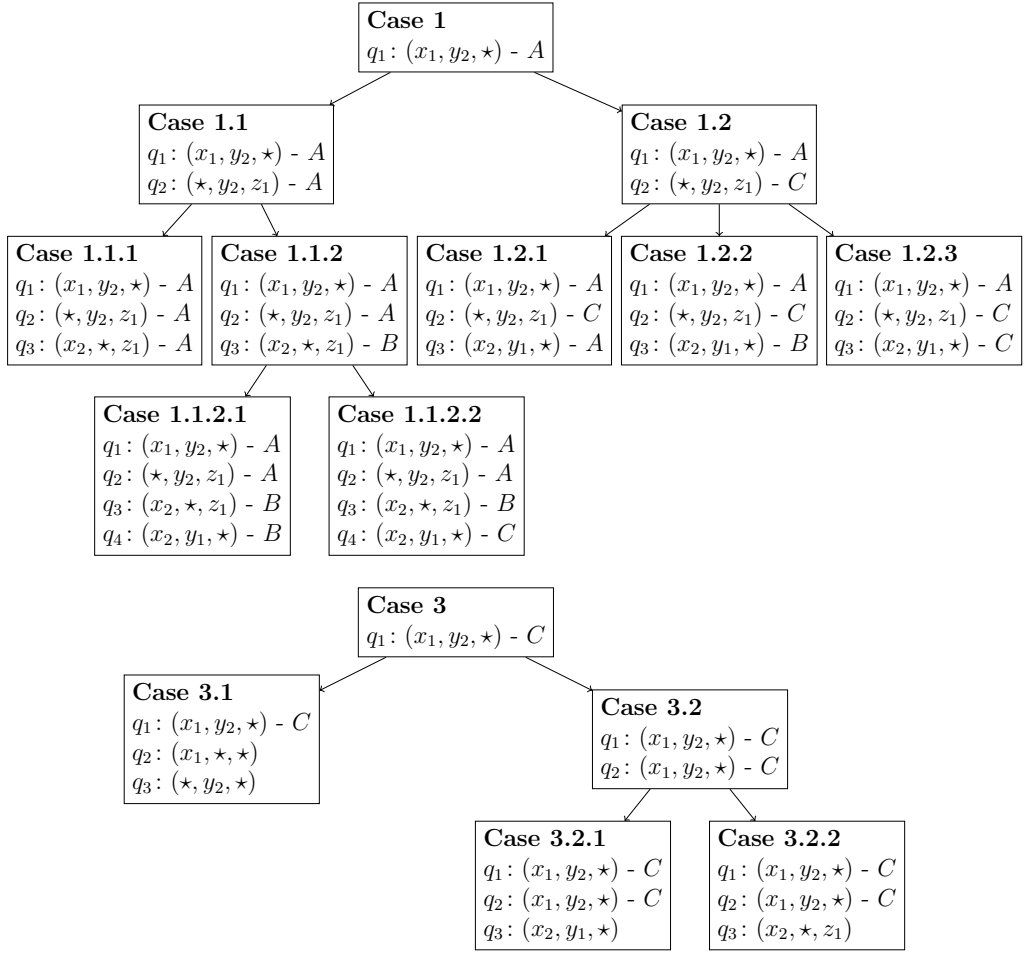

\begin{description}

\case[$q_1\colon (x_1,y_2,\star)$ - $A$]1{$q_1$ is of type $A$}%
Since $q_1$ is of type $A$, no other question contains the color $x_1$ on the first peg and 
there is a question $q_2$, different from $q_1$, which contains $y_2$ on the second peg.
\footnote{In the following, we will omit the reference to the 
peg in question when talking about the colors $x_i,y_i,z_i$ ($i\in[2]$), as these colors always 
refer to the first, second, and third peg, respectively. Thus we write, e.g., that a question 
does not contain $z_2$ when we mean that it does not contain 
that color on the third peg.}
Similarly to $q_1$, if $q_2$ does not contain $z_1$, it distinguishes $s_1$ and $s_2$ (because it also does not contain $x_1$). 
Hence, $q_2=(\star,y_2,z_1)$.
Moreover, we have $z_1 \neq z_2$.
Thus, in Case 1,
$x_1 \neq x_2$, $y_1 \neq y_2$, $z_1 \neq z_2$ holds,
and we can apply the claim.

We now look at the type of $q_2$. Note that it cannot be of type $B$, 
since $q_2$ contains $y_2$ which is also in $q_1$.
\pagebreak[3]

\begin{description}
\case[$q_1\colon (x_1,y_2,\star)$ - $A$\\
  $q_2\colon (\star,y_2,z_1)$ - $A$]{1.1}{$q_2$ is of type $A$}
Since $q_2$ is of type $A$, there is a question $q_3$, 
different from $q_2$, that contains $z_1$. If $q_3 = q_1$ , 
then we have $q_1 = (x_1,y_2,z_1)$ and 
$x_1 \neq x_2,y_1 \neq y_2,z_1 \neq z_2$, therefore $q_1$ distinguishes $s_1$ and $s_2$. We can then assume that $q_1 \neq q_3$.

Similarly to previous cases, $q_3$ must contain either $x_2$ or $y_2$, as otherwise it would distinguish $s_1$ and $s_2$. However, 
since $y_2$ is already in $q_1$ and $q_2$, it cannot occur in $q_3$. Therefore, $q_3 = (x_2, \star, z_1)$ and $q_3$ is either of type $A$
or of type $B$.

\begin{description}
\case[$q_1\colon (x_1,y_2,\star)$ - $A$\\
        $q_2\colon (\star,y_2,z_1)$ - $A$\\
        $q_3\colon (x_2,\star,z_1)$ - $A$]{1.1.1}{$q_3$ is of type $A$}
%
Since $Q$ avoids pattern~3, 
either $y_1$ or $z_2$ occur in some question $q_4$. 
W.l.o.g, we can assume that $q_4 = (\star, y_1, \star)$. 
Note that $q_4 \neq q_3$, as otherwise $q_4=(x_2,y_1,z_1) $, which is not possible by Claim~(b). Further,
$q_4 \notin\{ q_1, q_2\}$ holds as both questions contain $y_2$. Thus, $q_4 \notin\{ q_1, q_2, q_3\}$. 
As $x_2$ occurs in $q_3$, and $q_3$ is of type $A$, Claim~(a) yields
$q_4 = (\star, y_1, z_2)$. 

Since $q_4$ is of type $A$, $B$ or $C$, there exists a question $q_5$ 
that contains $y_1$ or $z_2$. 
Moreover, since $Q$ avoids pattern~4, $q_5$ cannot contain both $y_1$ and $z_2$. 
Since $q_1$ and $q_3$ are of type~$A$, $q_5$ cannot contain $x_1$ or $x_2$ either. 
Therefore, $q_5$ will either contain $y_1$ and neither $x_2$ nor $z_2$, or $q_5$ will 
contain $z_2$ and neither $x_1$ nor $y_1$. By Claim~(c), in both cases, $q_5$ 
distinguishes $s_1$ and $s_2$.

\case[$q_1\colon (x_1,y_2,\star)$ - $A$\\
        $q_2\colon (\star,y_2,z_1)$ - $A$\\
        $q_3\colon (x_2,\star,z_1)$ - $B$]{1.1.2}{$q_3$ is of type $B$}%
As $q_3$ is of type $B$, there exists another question $q_4$ that contains $x_2$, 
and by Claim~(a), it must also contain $y_1$ or $z_1$,
Since $z_1$ already occurs in the questions $q_2$ and $q_3$, $q_4 = (x_2, y_1, \star)$ follows.

\begin{description}
\case[$q_1\colon (x_1,y_2,\star)$ - $A$\\
        $q_2\colon (\star,y_2,z_1)$ - $A$\\
        $q_3\colon (x_2,\star,z_1)$ - $B$\\
        $q_4\colon (x_2, y_1, \star)$ - $B$]{1.1.2.1}{$q_4$ is of type $B$}%
Since $Q$ avoids pattern~5, $z_2$ is not a missing color, and it must occur in a 
question $q_5$. 
Then $q_5$ must be different from $q_2$ and $q_3$, 
which contain $z_1$, and from $q_1$, as otherwise $q_5=(x_1,y_2,z_2)$ would hold, 
and $q_4$, as otherwise $q_5=(x_2,y_1,z_2)$ would hold, each of these cases contradicting Claim~(b). Moreover, since $q_1$ is of type $A$ and 
$q_4$ of type $B$, $q_5$ contains neither $x_1$ nor $y_1$, but contains $z_2$. 
Therefore, by Claim~(c),  $q_5$ distinguishes $s_1$ and $s_2$. 

\case[$q_1\colon (x_1,y_2,\star)$ - $A$\\
        $q_2\colon (\star,y_2,z_1)$ - $A$\\
        $q_3\colon (x_2,\star,z_1)$ - $B$\\
        $q_4\colon (x_2, y_1, \star)$ - $C$]{1.1.2.2}{$q_4$ is of type $C$}
Then there exists another question $q_5$ that contains $y_1$.
Because of $y_1 \neq y_2$,
it is different from $q_1$ and $q_2$. 
As $q_3$ and $q_4$ contain $x_2$,
$q_5$ cannot contain it. 
By Claim~(a), must contain $z_2$, i.e.,
$q_5 = (\star, y_1, z_2)$. 

Since $y_1$ occurs in both $q_4$ and $q_5$, $q_5$ cannot be of type $B$. It cannot be of type $C$ either, as $q_1,q_2,\dots,q_5$ 
would give the sequence $AABCC$ which is 
forbidden by pattern~6. So $q_5$ must be of type $A$. Therefore, there is another 
question $q_6$ that contains~$z_2$. This question cannot contain $x_1$, which only 
occurs in $q_1$, nor $y_1$, which occurs in both $q_4$ and $q_5$. However, it contains
$z_2$. Therefore, by Claim~(c),  $q_5$ distinguishes $s_1$ and $s_2$. 
\end{description}

\end{description}

\case[$q_1\colon (x_1,y_2,\star)$ - $A$\\
        $q_2\colon (\star,y_2,z_1)$ - $C$]{1.2}{$q_2$ is of type $C$}%
Since $Q$ avoids pattern~2, at least one color from $x_2$, $y_1$ or $z_2$ occurs in a question. 
Note that the cases where $x_2$ or $z_2$ occur in a question are symmetric. Moreover, 
if $q_3$ contains $x_2$, it must also contain $y_1$, since $z_1$ occurs in the question $q_2$ of type~$C$.
Analogously, if $q_3$ contains $z_2$, it must also contain $y_1$, since 
$x_1$ occurs in the question $q_1$ of type~$A$.
If $y_1$ occurs in a question, it must also contain $x_2$ or $z_2$. 
Therefore, we can assume w.l.o.g. that there exists a question $q_3 = (x_2,y_1,\star)$.

\begin{description}
\case[$q_1\colon (x_1,y_2,\star)$ - $A$\\
        $q_2\colon (\star,y_2,z_1)$ - $C$\\
        $q_3\colon (x_2,y_1,\star)$ - $A$]{1.2.1}{$q_3$ is of type $A$}%
Since $q_3$ is of type $A$, there is another question $q_4$ that contains $y_1$. 
As the question $q_3$ of type~$A$ contains $x_2$, 
$q_4$ cannot contain $x_2$ and it must contain $z_2$.
$q_4$ question cannot be of type $B$, as $y_1$ occurs in question $q_3$ and it cannot 
be of type $C$ as $Q$ avoids pattern~7 and both pairs of questions $q_1q_2$ and $q_3q_4$ 
would both be a sequence $AC$. Therefore, $q_4$ is of type $A$. Note now that 
if we forget questions $q_1$ and $q_2$, the questions $q_3$ and $q_4$ are 
exactly in the same situation as questions 
$q_1$ and $q_2$ in Case~1.1. 
This case has been concluded with the fact that $s_1$ and $s_2$ are distinguished, 
and thus this is also the case here.

\pagebreak[3]

\case[$q_1\colon (x_1,y_2,\star)$ - $A$\\
        $q_2\colon (\star,y_2,z_1)$ - $C$\\
        $q_3\colon (x_2,y_1,\star)$ - $B$]{1.2.2}{$q_3$ is of type $B$}%
There exists another question $q_4$ that contains $x_2$. It cannot be question $q_2$, as then 
$q_4= (x_2,y_2,z_1) $ would distinguish $s_1$ and $s_2$ by Claim~(b). Moreover, 
since $q_2$ is of type $C$  and $q_3$ of type $B$, $q_4$ can contain neither 
$y_1$ nor $z_1$. 
Therefore, by Claim~(c), it distinguishes $s_1$ and $s_2$.

\pagebreak[3]

\case[$q_1\colon (x_1,y_2,\star)$ - $A$\\
        $q_2\colon (\star,y_2,z_1)$ - $C$\\
        $q_3\colon (x_2,y_1,\star)$ - $C$]{1.2.3}{$q_3$ is of type $C$}%
Since $q_3$ is of type $C$, there exist questions $q_4$ and $q_5$, different from $q_3$, that contain $x_2$ and $y_1$, respectively. 

\begin{description}
\item $q_4 \neq q_5$.

Then $q_4$ can contain neither $y_1$, 
which occurs in both $q_3$ and $q_5$, nor $z_1$, which occurs in the question $q_2$ of type~$C$. 
Therefore, by Claim~(c), it distinguishes $s_1$ and $s_2$.

\item $q_4 = q_5$.

Then we have the sequence $AC$ of the questions $q_1q_2$ 
and the double-neighboring questions $q_3q_4$. Since $Q$ avoids pattern~8, there is no 
missing color on the third peg (the single peg of type~$C$).
Thus, there must be a question $q_6$ that contains $z_2$ and is not equal to $q_2$ since $z_1 \neq z_2$.
The cases where $q_6 \in \{q_1,q_3,q_4\}$ lead to $q_6=(x_1,y_2,z_2)$, $q_6=(x_2,y_1,z_2)$
and $q_6= (x_2,y_1,z_2)$, respectively. By Claim~(b), in all three cases, $q_6$ distinguishes $s_1$ and $s_2$. 
Therefore, $q_6$ is a new question. 
However, $q_6$ can neither contain $x_1$, which occurs on the single peg of $q_1$, nor $y_1$, 
which occurs in both $q_3$ and $q_4$. 
Therefore, by Claim~(c), it distinguishes $s_1$ and $s_2$.
\end{description}
\end{description}
\end{description}

\case[]{2}{$q_1$ is of type $B$}
By symmetry, switching the roles of pegs~$1$ and~$2$, this works analogously to Case 1.

\case[$q_1\colon (x_1,y_2,\star)$ - $C$]{3}{$q_1$ is of type $C$}%
Then there exists another question $q_2$ that contains $x_1$ and another question $q_3$ 
that contains $y_2$. These two questions can be the same, in the case that $q_2=q_3$ is double 
neighboring to $q_1$. Let us now study two cases depending on whether they are 
different questions or not.

\begin{description}
\case[$q_1\colon (x_1,y_2,\star)$ - $C$\\
        $q_2\colon (x_1,\star,\star)$\\
        $q_3\colon (\star, y_2, \star)$]{3.1}{$q_2 \neq q_3$}
To avoid distinguishing $s_1$ and $s_2$, $q_2$ must contain $z_2$ and $q_3$ must 
contain $z_1$, with $z_1 \neq z_2$. 
Thus, for the remainder of Case 3.1 we can assume that $x_1 \neq x_2$, $y_1 \neq y_2$, and $z_1 \neq z_2$, which means that we can apply the claim.

If not both of $z_1$ and  $z_2$ occur in other questions,
we are back in Case~1, where we replace type $A$ with type $C$.
Thus, let $q_4$ be another question that contains $z_1$, and 
let $q_5$ be another question that contains $z_2$. 
Both these questions must be 
different from $q_1$, as otherwise $q_4=(x_1,y_2,z_1)$ and $q_5=(x_1,y_2,z_2)$, which by Claim~(b) would distinguish $s_1$ and $s_2$. 

For $q_4$ and $q_5$ to not distinguish 
$s_1$ and $s_2$ it must be the case that $q_4$ contains $x_2$ and $q_5$ contains $y_1$, 
both colors also occurring in other questions, as otherwise again 
we would be in Case 1. However, then $q_2$ is of type $B$, $q_3$ of type $A$, $q_4$ of type $B$ 
and $q_5$ of type $A$, 
with $q_2q_5$ and $q_4q_3$ both forming sequences of type $BA$, which is 
forbidden by pattern~7. 
Therefore, if we are not in Case~1, we have a question distinguishing $s_1$ and $s_2$.

Note that with this case, we have finished the proof in all cases in which 
one of the colors of the secrets is not part of the double pegs of double-neighboring questions. 
We can then assume in the following that every color of the secrets only 
occurs as part of double-neighboring questions.

\case[$q_1\colon (x_1,y_2,\star)$ - $C$\\
        $q_2\colon (x_1,y_2,\star)$ - $C$]{3.2}{$q_2 = q_3$}
Since $Q$ avoids pattern~9, one of $x_2$ and $y_1$ occurs in another question. Assume w.l.o.g.\ that there exists another question $q_3$ (note that as we are in the case 
$q_2 = q_3$, we can renumber and continue with $q_3$, which now is different from $q_2$) that contains $x_2$. If $q_3$ does not distinguish $s_1$ and $s_2$, it must contain either $y_1$ or $z_1$, which leads to the following two cases.

\begin{description}
\case[$q_1\colon (x_1,y_2,\star)$ - $C$\\
        $q_2\colon (x_1,y_2,\star)$ - $C$\\
        $q_3\colon (x_2, y_1, \star)$]{3.2.1}{$q_3 $ contains $y_1$}
By the assumption above, $q_3$ is of type $C$, double-neighboring to another question $q_4$ on the two first pegs. However, $q_1q_2$ and $q_3q_4$ would be two pairs 
of double-neighboring questions of type $CC$ 
which is impossible since $Q$ avoids pattern~10.

\case[$q_1\colon (x_1,y_2,\star)$ - $C$\\
        $q_2\colon (x_1,y_2,\star)$ - $C$\\
        $q_3\colon (x_2,\star , z_1)$]{3.2.2}{$q_3 $ contains $z_1$}
Similar to Case 3.2.1, $q_3$ must be double-neighboring to a question $q_4$ on the first and
third peg. 
We now have two pairs of double-neighboring questions, $q_1q_2$ of type $CC$ and $q_3q_4$ 
of type $BB$. Since $Q$ avoids pattern~11, it is impossible that both the second and third 
peg have missing colors. W.l.o.g, we can thus assume that there is a 
question $q_5$  that contains $y_1$. It must also contain $z_2$, since $x_2$ already occurs in $q_3$  and $q_4$. By assumption, it is double-neighboring to another 
question $q_6$ on $y_1$ and $z_2$.
However, if this were the case, we would have three pairs 
of double-neighboring questions, $q_1q_2$ of type $CC$, $q_3q_4$ of type $BB$ and $q_5q_6$ of 
type $AA$, which is forbidden by pattern~12. Therefore, 
this case cannot occur either.
\end{description}
\end{description}
\end{description}

In conclusion, if a strategy contains only questions of types $A$, $B$, and $C$ and avoids all of the 12~patterns listed, it must be feasible.\qed
\end{proof}

\begin{remark}
Let a strategy have only questions of type $A$, $B$ or $C$. Then
by Lemma~\ref{lem:super_lemma},
avoiding the patterns 1 to 12 is sufficient for the feasibility of a strategy.
On the other hand, it is easy to see that
if a strategy contains one of the patterns (except possibly pattern 2), it is not feasible.
\end{remark}

We end this section by noting a lower bound for $(a,b,c)$-Mastermind when $b$ and $c$ 
are not too far apart, using results from C\'{a}ceres \emph{et al.}~\cite{CHMPPSW07}.

\begin{lemma}
    \label{lem:lower_bound_2b_greater_c}
    Let $a,b,c \in \NN $ with $a \le b \le c$ and $2b > c$. Then it holds that $f(a,b,c) \ge \lfloor \frac{2}{3}
    \left(b+c-1\right) \rfloor$.
\end{lemma}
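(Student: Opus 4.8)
The plan is to reduce the three-peg game to the two-peg game on the last two pegs by freezing the color of the first peg, and then to quote the metric dimension of $K_b\times K_c$ from \cite{CHMPPSW07}. Recall that, exactly as in the three-peg correspondence used throughout this paper, the optimal static black-peg Mastermind value on two pegs with $b$ and $c$ colors equals the metric dimension of $K_b\times K_c$. Since $a\le b\le c$ and $2b>c$ (so in particular $2b\ge c$), the formula of \cite{CHMPPSW07} gives that this quantity is $\lfloor \frac{2}{3}(b+c-1)\rfloor$. It therefore suffices to show that every feasible strategy for $(a,b,c)$-Mastermind yields a feasible strategy for the two-peg game $(b,c)$-Mastermind of no greater size.

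First I would let $Q$ be an arbitrary feasible strategy for $(a,b,c)$-Mastermind and restrict attention to the secrets of the form $(1,y,z)$ with $y\in[b]$, $z\in[c]$; these are legitimate secrets since $a\ge 1$. For such secrets the first peg contributes equally, so for any question $q=(q_1,q_2,q_3)\in Q$ and any two such secrets $s=(1,y,z)$, $s'=(1,y',z')$ we have
\[
  g(q,s)-g(q,s') = \bigl(\delta_{q_2,y}-\delta_{q_2,y'}\bigr)+\bigl(\delta_{q_3,z}-\delta_{q_3,z'}\bigr),
\]
where $\delta$ is the Kronecker delta. The right-hand side is precisely the difference in black pegs that the two-peg question $(q_2,q_3)$ produces for the secrets $(y,z)$ and $(y',z')$ of $(b,c)$-Mastermind. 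Hence $q$ distinguishes $s$ and $s'$ if and only if its projection $(q_2,q_3)$ distinguishes $(y,z)$ and $(y',z')$.

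Next I would set $\tilde Q=\{(q_2,q_3):(q_1,q_2,q_3)\in Q\}$. Because $Q$ is feasible it distinguishes every pair $(1,y,z)$, $(1,y',z')$, so by the equivalence just established $\tilde Q$ distinguishes every pair $(y,z)$, $(y',z')$ and is a feasible strategy for $(b,c)$-Mastermind. Projection can only identify questions, so $|\tilde Q|\le|Q|$, and therefore $|Q|\ge|\tilde Q|\ge \lfloor \frac{2}{3}(b+c-1)\rfloor$. Choosing $Q$ optimal gives $f(a,b,c)\ge\lfloor \frac{2}{3}(b+c-1)\rfloor$.

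The only real obstacle is bookkeeping around the quoted external result: I must verify that $2b>c$ places us in the branch of the \cite{CHMPPSW07} formula giving $\lfloor \frac{2}{3}(b+c-1)\rfloor$ rather than $c-1$, that the two-peg ordering $b\le c$ is respected, and that the (routine) equivalence between two-peg static black-peg Mastermind and the metric dimension of $K_b\times K_c$ is stated so that the external theorem applies verbatim. The combinatorial heart of the argument --- that freezing one peg turns a feasible three-peg strategy into a feasible two-peg strategy --- is then immediate from the displayed identity.
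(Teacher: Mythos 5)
Your proposal is correct, and its overall route is the same as the paper's: both reduce the claim to the formula $\lfloor \frac{2}{3}(b+c-1)\rfloor$ for the metric dimension of $K_b\times K_c$ from~\cite{CHMPPSW07}, using $2b>c$ (hence $2b\ge c$) to land in the right branch. The only difference is in how the reduction from three pegs to two is justified. The paper simply cites \cite[Corollary~3.2]{CHMPPSW07}, which states that the metric dimension of $K_a\times K_b\times K_c$ is at least that of $K_b\times K_c$, i.e.\ $f(1,b,c)\le f(a,b,c)$. You instead prove this monotonicity from scratch: restricting to secrets with first coordinate $1$, the first-peg contribution cancels in the difference $g(q,s)-g(q,s')$, so the projection $\tilde Q=\{(q_2,q_3)\}$ of any feasible three-peg strategy is a feasible two-peg strategy with $|\tilde Q|\le|Q|$. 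This argument is sound (it is the two-peg analogue of the projection ideas in Lemma~\ref{lem_proj}) and makes the lemma self-contained at the cost of a few extra lines; the paper's version is shorter but leans on one more external citation. Either way the combinatorial content is identical, so this counts as essentially the same proof with one quoted step expanded.
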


\begin{proof}
    First, by \cite[Theorem 6.1]{CHMPPSW07}, the metric dimension of $K_b \times K_c$ is $\lfloor \frac{2}{3} \left(b+c-1\right) \rfloor$, therefore $f(1,b,c) = \lfloor \frac{2}{3}  \left(b+c-1\right) \rfloor$. 
    Moreover, by \cite[Corollary 3.2]{CHMPPSW07}, the metric dimension of $K_a \times K_b \times K_c$ is greater than or equal to $K_b \times K_c$ and therefore $f(1,b,c) \le f(a,b,c)$ holds. The assertion follows.
\end{proof}

\section{General lower bound}
\label{sec_lower}
In this section, we prove a general lower bound, which is formulated in the following theorem.

\begin{theorem}
\label{thm_nearopt}
    Let $a,b,c \in \NN$. Then $f(a,b,c) \ge \lfloor \frac{a+b+c}{2}\rfloor - 1$ holds.
\end{theorem}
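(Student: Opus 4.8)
The plan is to establish the lower bound $f(a,b,c) \ge \lfloor \frac{a+b+c}{2}\rfloor - 1$ via a counting argument on the colors appearing in a feasible strategy $Q$. The guiding intuition comes from the structure that the authors have already emphasized: the ``cheapest'' useful questions are those in which one peg contributes a color occurring only once while the other two pegs contribute colors occurring twice (the types $A$, $B$, $C$). A single question involving $k$ questions can ``cover'' only a limited number of colors across the three pegs, so if very few questions are used, many colors on the three pegs must be missing entirely, and by Lemma~\ref{lem-forbidden}\ref{lem-forbiddena} at most one color may be missing per peg. I would turn this into a precise inequality by double counting color-occurrences.

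First I would set $n = |Q|$ and, for each peg $p \in [3]$, count how many distinct colors of peg $p$ actually occur among the $n$ questions. Call these counts $\alpha, \beta, \gamma$ for the three pegs. Since each question supplies exactly one color per peg, the total number of (peg, color)-slots is $3n$, and one has $\alpha + \beta + \gamma \le 3n$ trivially; but to get a bound in terms of $a,b,c$ I need a lower bound on $\alpha + \beta + \gamma$. By Lemma~\ref{lem-forbidden}\ref{lem-forbiddena}, at most one color is missing on each peg, so $\alpha \ge a-1$, $\beta \ge b-1$, $\gamma \ge c-1$. This alone gives $3n \ge (a-1)+(b-1)+(c-1)$, i.e. $n \ge \frac{a+b+c-3}{3}$, which is too weak. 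The key refinement is that colors occurring only \emph{once} are wasteful: a question all of whose colors are ``fresh'' (occur once) can be essentially matched against secrets too cheaply, so feasibility forces most occurrences to have multiplicity at least $2$. I would make this rigorous using the forbidden-configuration lemmas — in particular parts \ref{lem-forbiddenb}, \ref{lem-forbiddenc}, and the double-neighboring restrictions \ref{lem-forbiddenf}, \ref{lem-forbiddeng} — to bound the number of colors that may occur exactly once.

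The heart of the argument is the following balance. Let $m_p$ denote the number of colors on peg $p$ that occur exactly once across $Q$, and let the remaining occurring colors have multiplicity $\ge 2$. Counting occurrences on peg $p$ gives $n \ge m_p + 2(\text{(number of colors of multiplicity} \ge 2)) \ge m_p + 2(\alpha_p - m_p) = 2\alpha_p - m_p$, where $\alpha_p$ is the number of distinct occurring colors on peg $p$. Summing over the three pegs and using $\alpha_p \ge (\text{colors on peg }p) - 1$, I obtain $3n \ge 2\big((a-1)+(b-1)+(c-1)\big) - (m_1+m_2+m_3)$. The decisive step is then to bound $m_1+m_2+m_3$ from above: the forbidden patterns (at most one $(1,1,1)$-type question, the prohibition in \ref{lem-forbiddenc} on three simultaneous single-peg configurations, and the structural constraints on paths/cycles in the strategy graph) should force $m_1+m_2+m_3$ to be small relative to $n$, yielding after rearrangement $n \ge \lfloor \frac{a+b+c}{2}\rfloor - 1$.

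The main obstacle I anticipate is precisely this last bound on the number of multiplicity-one colors: the naive occurrence count gives a coefficient that is slightly off, and closing the gap to the floor $\lfloor \frac{a+b+c}{2}\rfloor - 1$ will require exploiting feasibility more carefully than a pure counting bound permits — most likely by analyzing the connected components of the strategy graph (each a path or a cycle, as noted after Definition~\ref{def-neighboring2}) and arguing that each component can ``save'' at most a bounded number of single-occurrence colors, with the additive $-1$ coming from the single permitted missing color or the single permitted $(1,1,1)$-question. I would therefore structure the proof so that the rough inequality $3n \gtrsim 2(a+b+c)$ is obtained first, and then sharpen the $m_p$ bounds using Lemma~\ref{lem-forbidden}; if the pure algebra stalls, I would fall back on a direct case analysis distinguishing whether each peg has a missing color, since each such missing color tightens $\alpha_p$ and simultaneously, via \ref{lem-forbiddend}--\ref{lem-forbiddene}, restricts which single-occurrence questions can coexist — exactly the trade-off that produces the factor $\tfrac12$ and hence the floor in the claimed bound.
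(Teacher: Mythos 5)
Your plan is essentially the paper's own proof: your $m_1,m_2,m_3$ are the paper's $x,y,z$, your per‑peg count $n\ge 2\alpha_p-m_p$ combined with $\alpha_p\ge(\text{number of colors})-1$ from Lemma~\ref{lem-forbidden}\ref{lem-forbiddena} is exactly the paper's starting inequality, and the fallback you describe (a case split on how many pegs have a missing color) is precisely the paper's Cases 1--3. The one step you leave as ``should force $m_1+m_2+m_3$ to be small'' is where all the work happens, and it closes by a one‑line pigeonhole that you do not quite state: writing $m_1+m_2+m_3=\sum_{q\in Q}s_q$ with $s_q$ the number of pegs of $q$ carrying a single‑occurrence color, an excess $m_1+m_2+m_3\ge n+t$ forces questions with $s_q\ge2$, i.e.\ $(1,1,\star)$‑, $(1,\star,1)$‑ or $(\star,1,1)$‑questions, to absorb a total excess of $t$; Lemma~\ref{lem-forbidden}\ref{lem-forbiddenb}--\ref{lem-forbiddene} cap that excess at $0$, $1$ or $2$ according to whether three, two, or at most one peg has a missing color. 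This is exactly the trade‑off you anticipated: each additional missing color costs $2$ in the lower bound $\alpha_p\ge a-1$ but buys back at least $1$ in the cap on $m_1+m_2+m_3$, and in each of the three cases the arithmetic $4n\ge 2(a+b+c)-6-t'$ (for the appropriate constant $t'$) yields $n\ge\lfloor\frac{a+b+c}{2}\rfloor-1$. Note that you do not need parts \ref{lem-forbiddenf} and \ref{lem-forbiddeng} or the path/cycle structure of the strategy graph for this theorem; parts \ref{lem-forbiddenb}--\ref{lem-forbiddene} suffice, and the additive $-1$ comes out of the case arithmetic rather than from any single permitted exception.
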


\begin{proof}
    Assume that $Q$ is a feasible strategy for $(a,b,c)$-Mastermind, with $x$ colors that occur only 
    once on the first peg, $y$ colors that occur only once on the second peg and $z$ colors that occur 
    only once on the third peg. Assume also that $|Q| = \lfloor \frac{a+b+c}{2}\rfloor - k$ with $k \ge 0$, and thus
    \begin{eqnarray}
    \label{abc_eq}
    2(a+b+c) & \ge & 4|Q| +4k. 
    \end{eqnarray}
    By Lemma~\ref{lem-forbidden}\ref{lem-forbiddena}, on each peg at most one color is missing. 
    \begin{description}
        \case 1{On each peg one color is missing}
        
        Recall that $x$ is the number of colors that occur exactly once on the first peg. 
        As all other colors except for the missing one occur at least twice, $|Q| \ge x + 2(a-x-1)$, and thus $x \ge 2a - |Q| -2$.
        We obtain in the same way that $y \ge 2b - |Q| - 2$ and $z \ge 2c - |Q| - 2$.
        
        Thus, by~\eqref{abc_eq},
        \begin{eqnarray*}
        x+y+z \; \ge \; 2(a+b+c) - 3|Q| - 6 \; \ge \; |Q| + 4k - 6.
        \end{eqnarray*}
        
        If $x+y+z \ge |Q|+1$, there must be a $(1,1,\star)$, a $(1,\star,1)$ or a $(\star,1,1)$-question, 
        but by Lemma~\ref{lem-forbidden}\ref{lem-forbiddend} 
        this is impossible, since there is a missing color on each peg. 
        So $x+y+z \le|Q|$ and thus $4k \le 6$, which shows that $k \le 1$.

        \case 2{There are two pegs with one color missing on each}

        W.l.o.g., the missing colors occur on the first and the second peg and the third peg has no missing color.
        Analogously to Case 1, for the first two pegs we have $x \ge 2a - |Q| -2$ and $y \ge 2b - |Q| -2$. 
        On the third peg, since there is no missing color, $|Q| \geq z + 2(c-z)$ holds and $z \geq 2c - |Q|$
        follows. 
        
        Thus, by~\eqref{abc_eq},
        \begin{eqnarray*}
        x+y+z \; \ge \; 2(a+b+c) - 3|Q| - 4 \; \ge \; |Q| + 4k - 4.
        \end{eqnarray*}
        
        Since there is a missing color on the first and on the second peg, 
        by Lemma~\ref{lem-forbidden}\ref{lem-forbiddend}, 
        no $(1,1,\star)$-question can occur, and 
        by Lemma~\ref{lem-forbidden}\ref{lem-forbiddene}, 
        it is not possible that there is both a $(1,\star,1)$-question and
        a $(\star,1,1)$-question.
        Furthermore, by Lemma~\ref{lem-forbidden}\ref{lem-forbiddenb},
        if one of these questions occurs, there is no other question of the same type.
        If $x+y+z \ge |Q|+2$, one of these situations would have to be the case, which shows that $x+y+z \le |Q| + 1$. Thus, we must have $4k - 4 \le 1$ 
        and so $k \le 1$. 

        \case 3{There is at most one peg having a missing color}
        
        W.l.o.g., there are no missing colors occurring on the second and the third peg.
        As above, this means $y \ge 2b - |Q|$ and $z \ge 2c - |Q|$. As there
        there might be a missing color on the first peg, we have $x \geq 2a - |Q| - 2$.
        
        Thus, by~\eqref{abc_eq},
        \begin{eqnarray*}
        x+y+z \; \ge \; 2(a+b+c) - 3|Q| - 2 \; \ge \; |Q| + 4k - 2.
        \end{eqnarray*}
        
        By  Lemma~\ref{lem-forbidden}\ref{lem-forbiddenb}, at most one
        $(1,1,\star)$-question, at most one  $(1,\star.1)$-question, and at most one 
        $(\star.1,1)$-question occurs. 
        By Lemma~\ref{lem-forbidden}\ref{lem-forbiddenc},
        it is not possible that all of them occur at the same time. 
        If $x+y+z \ge |Q|+3$, this would be the case, which shows that $x+y+z \le |Q| + 2 $, $4k - 2 \le 2$ and so $k \le 1$. 
        
    \end{description}

    Thus, the feasibility of $Q$ implies that $k \le 1$, which proves that $|Q| \ge \lfloor \frac{a+b+c}{2}\rfloor - 1$ for all feasible strategies.
\end{proof}

\section{The case of $\mathbf{(a,b,c)}$-Mastermind with $ \mathbf{3a \ge b+c} $}
\label{sec_ge}

In this section, we prove the following theorem.

\begin{theorem}
    \label{thm:3a_larger}
    Let $a,b,c \in \NN$ with $a \leq b \leq c$ and $3a \geq b+c$. Then it holds that $f(a,b,c) \leq \lfloor \frac{a+b+c}{2} \rfloor$.
\end{theorem}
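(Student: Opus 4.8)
The plan is to construct an explicit feasible strategy using only questions of types $A$, $B$, and $C$ whose size is exactly $\lfloor\frac{a+b+c}{2}\rfloor$, and then to invoke Lemma~\ref{lem:super_lemma} for feasibility. Since $f(a,b,c)$ equals the size of a minimum feasible strategy, exhibiting one feasible strategy of this size proves the upper bound. The first task is to fix how many questions of each type to use. If a strategy consists of $n_A$, $n_B$, $n_C$ questions of types $A$, $B$, $C$, then on the first peg the type-$A$ colors occur once while the type-$B$ and type-$C$ colors occur twice, so the number of distinct first-peg colors is $n_A+\tfrac12(n_B+n_C)$, and the analogous count holds on the other two pegs. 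Requiring every color to be covered, i.e.\ $n_A+\tfrac12(n_B+n_C)=a$, $n_B+\tfrac12(n_A+n_C)=b$, $n_C+\tfrac12(n_A+n_B)=c$, forces $n_A=2a-N$, $n_B=2b-N$, $n_C=2c-N$ with $N=n_A+n_B+n_C=\tfrac12(a+b+c)$.

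Next I would verify that these counts are non-negative integers. Using $a\le b\le c$ together with the hypothesis $3a\ge b+c$ one obtains $2b\ge c$, hence $3b\ge a+c$ and $3c\ge a+b$, so that $n_A,n_B,n_C\ge 0$; a parity check shows they are integers exactly when $a+b+c$ is even. When $a+b+c$ is odd, Lemma~\ref{lem-forbidden}\ref{lem-forbiddena} permits one missing color, so I would instead demand that all colors be covered on the first two pegs while exactly one color is left uncovered on the (largest) third peg, i.e.\ replace $c$ by $c-1$ in the third equation. This yields $N=\lfloor\frac{a+b+c}{2}\rfloor$ and integer counts, and non-negativity again follows from $3a\ge b+c$ and $a\le b\le c$; the only case where a count would turn negative is $(a,b,c)=(1,1,1)$, which is handled directly since $f(1,1,1)=0$.

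Then I would give an explicit assignment of colors realizing exactly $n_A,n_B,n_C$ questions of each type, so that every single-peg color is used once, every double-peg color is used exactly twice, and all colors are covered except possibly one on the third peg. I would arrange the questions into the path/cycle components of the strategy graph chosen so that (i) no two questions are double-neighboring, making the strategy graph simple, (ii) the missing color, if any, lies only on the third peg, and (iii) adjacencies between questions of different types are kept under control. Feasibility then follows from Lemma~\ref{lem:super_lemma}: confining the missing color to a single peg avoids patterns~1 and~2, and, since for every type the double peg other than the third is fully covered, also pattern~3; the simplicity of the strategy graph eliminates patterns~4 and~8--12 (all of which require double-neighboring questions); and the controlled inter-type adjacencies dispose of the mixed sequence patterns~5,~6, and~7. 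Counting the questions gives $N=\lfloor\frac{a+b+c}{2}\rfloor$, as required.

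The main obstacle will be the third step, namely the explicit construction together with the full pattern check. The delicate point is to cover every color while using each single-peg color exactly once and each double-peg color exactly twice, and \emph{simultaneously} arranging the blocks so that the forbidden mixed-type sequences of patterns~5--7 do not appear. This is most awkward in the extreme subregimes (for instance when $3a$ greatly exceeds $b+c$, so that one of $n_A,n_B,n_C$ dwarfs the others and long single-type runs become unavoidable) and in the small-parameter boundary cases, which will likely have to be verified separately.
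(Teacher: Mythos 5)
Your setup is the same as the paper's: you pick the type counts $n_A=2a-N$, $n_B=2b-N$, $n_C=2c-N$ with $N=\lfloor\frac{a+b+c}{2}\rfloor$ (replacing $c$ by $c-1$ when $a+b+c$ is odd), check non-negativity from $3a\ge b+c$ and $a\le b\le c$, and plan to verify feasibility of a block-structured strategy via Lemma~\ref{lem:super_lemma}. However, the entire difficulty of the theorem lives in your third step, which you defer, and two of the structural claims you make there are not achievable, so the plan as stated does not go through.

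First, with only questions of types $A$, $B$, $C$, every question has both of its double-peg colors occurring twice, so every vertex of the strategy graph has degree $2$ (counted with multiplicity) and every component is a cycle; a component of size $2$ is automatically a pair of double-neighboring questions. Moreover, a cycle consisting solely of questions of one type must alternate between sharing the two double pegs and hence has \emph{even} length. Consequently: (i) your claim that the strategy graph can be made simple fails whenever one of the counts equals $2$ and that type forms its own block (the paper instead tolerates up to two double-neighboring pairs, which is harmless since pattern~12 requires all three of $AA$, $BB$, $CC$); and (ii) when $a+b+c\equiv 2,3\pmod 4$ all three counts are odd, so single-type blocks are impossible and the blocks must mix types --- this is exactly where patterns~5, 6, 7 become live threats and why the paper needs a separate Strategy~\ref{strat1b} whose third block interleaves one $A$, one $B$, and $z$ questions of type $C$. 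Finally, your plan does not account for $(a,b,c)\in\{(4,4,4),(4,4,5)\}$, where $n_A=n_B=n_C=2$ forces three double-neighboring pairs (pattern~12) under any single-type block decomposition; the paper abandons the $A/B/C$ framework there and uses a separately verified ad hoc strategy. Without the explicit color assignments and the case analysis handling the parity obstruction and these exceptional triples, the upper bound is not established.
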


To establish this theorem, we define suitable feasible strategies. To do so, let $c'=c$ if $a+b+c$ is even and $c'=c-1$ if $a+b+c$ is odd, so that $a+b+c'$ is an even number. 
We can now define $x$, $y$ and $z$, such that the strategies to be constructed below contain $x$~questions of type~$A$, $y$~questions of type $B$, and $z$~questions of type $C$:
\[
x:= 2a - \frac{a+b+c'}{2}, \; \;y:= 2b - \frac{a+b+c'}{2}, \; \;z:= 2c' - \frac{a+b+c'}{2}.
\]

Note that $x$, $y$ and $z$ are integers, because $a + b + c' \equiv 0 \pmod{2}$, and  that $x+y+z = \lfloor \frac{a+b+c}{2} \rfloor$. Moreover, since $3a \geq b+c$ and $b \geq a$, $x$ and $y$ are non-negative. If $c' = c$ then $z$ is also non-negative. 

In the remaining case, where $c'= c-1$, $z$ is non-negative if $3(c-1) \geq a+b$. Assume now that $3(c-1) < a+b$. It follows that $c-1 < a$, because otherwise we would have $3(c-1) \geq 3a \geq b+c \geq a+b$. Together, the inequalities $a \leq b \leq c$ and $c-1 < a$ imply $a=b=c$. So, $a+b+c - 3 = 3(c-1) < a+b$ and thus $c < 3$. This leaves two cases: $a=b=c=1$, where $f(a,b,c)=0$ and the statement of the theorem holds, or $a=b=c=2$, which does not fall into the case $c' = c-1$ since $a+b+c$ is even. Therefore, disregarding the uninteresting case $a=b=c=1$, $z$ is always non-negative, too.

\subsection{Strategy for the case $a+b+c \equiv 0,1 \pmod 4 $}

In this subsection, we provide a strategy for the case $3a \geq b+c$ and $a+b+c \equiv 0,1 \pmod 4 $. 
Observe that in this case, $x$, $y$ and $z$ are even.

As mentioned earlier, the strategy consists of $x$~questions of type~$A$, $y$~questions of type~$B$, 
and $z$~questions of type~$C$. Each of these question types will form a block (recall that a block is a connected component of the strategy graph).

\begin{strategy}
({$\lfloor \frac{a+b+c}{2}\rfloor$}-strategy for $(a,b,c)$-Mastermind 
with $3a \geq b+c$, $a+b+c \equiv 0,1 \pmod{4}$ and $ (a,b,c) \notin\{(4,4,4), (4,4,5)\}$)
\label{strat1a}
\end{strategy}

\begin{itemize}[wide=0pt]
\item[First peg:]
\mbox{}
\begin{itemize}
    \item First block: colors $1,\, 2, \, \dots, \, x$.
    \item Second block: colors $x+1,\, x+1, \, x+2, \, x+2, \,
        \dots, \, x+y/2, \, x+y/2 $.
    \item Third block: colors $x+y/2+1,\, x+y/2+1, \, x+y/2+2, \, x+y/2+2, \,
        \dots, \, a, \, a $.
\end{itemize}
\item[Second peg:]
\mbox{}
\begin{itemize}
    \item First block: colors $1,\, 1,\, 2,\, 2, \, \dots, \, x/2,\, x/2$.
    \item Second block: colors $x/2+1,\, x/2+2, \, 
        \dots, \, x/2 +y-1, \, x/2+y $.
    \item Third block: colors $x/2+y+1,\, x/2+y+2, \, x/2+y+2,\, 
        \dots, \, b, \, b, \, x/2+y+1 $.
\end{itemize}
\item[Third peg:]
\mbox{}
\begin{itemize}
    \item First block: colors $1,\, 2, \, 2, \, 
        \dots, \, x/2, \, x/2, \, 1 $.
    \item Second block: colors $x/2+1,\, x/2+2, \, x/2+2, \, \dots, \, x/2+y/2,\, x/2+y/2, \, x/2+1$.
    \item Third block: colors $x/2+y/2+1,\, x/2+y/2+2, \, \dots, \,  c'-1,\, c'$.
\end{itemize}
\end{itemize}

\begin{remark}
\label{rem1a}
Strategy~\ref{strat1a} has the following characteristics:
\begin{description}
\item[(a)]
It contains three (possibly empty) blocks, each forming a cycle of questions:
one cycle consisting of $x$ questions of type~$A$,
one consisting of $y$ questions of type~$B$, and
one consisting of $z$ questions of type~$C$.
\item[(b)] Each block can contain a double-neighboring pair of questions (if $x=2$, $y=2$ 
or $z=2$). However, because of $ (a,b,c) \not \in \{(4,4,4), (4,4,5)\}$, there 
can be at most two double-neighboring pairs of questions in total.
\item[(c)] There are no missing colors on the first two pegs, and there is
a single missing color on the third peg only in the case $a+b+c \equiv 1 \pmod{4}$.
\end{description}
\end{remark}

Two examples for different triples ($a,b,c)$ are given in Table~\ref{tab3abc}(b),(c).

\begin{table}[t]
	\setbox9=\hbox{\begin{tabular}{c || c | c | c || c }
                Peg & $1$ & $2$ & $3$ & Type
                \\
                \hline
                \hline
                $q_1$ & $1$ & $1$ & $1$ & A
                \\
                \hline
                $q_2$ & $2$ & $1$ & $1$ & A
                \\
                \hline\hline
                $q_3$ & $3$ & $2$ & $2$ & B
                \\
                \hline
                $q_4$ & $3$ & $3$ & $3$ & B
                \\
                \hline
                $q_5$ & $4$ & $4$ & $3$ & B
                \\
                \hline
                $q_6$ & $4$ & $5$ & $2$ & B
                \\
                \hline\hline
                $q_7$ & $5$ & $6$ & $4$ & C
                \\
                \hline
                $q_8$ &  $5$ & $7$ & $5$ & C
                \\
                \hline
                $q_9$ &  $6$ & $7$ & $6$ & C
                \\
                \hline
                $q_{10}$ &  $6$ & $8$ & $7$ & C
                \\
                \hline
                $q_{11}$ &  $7$ & $8$ & $8$ & C
                \\
                \hline
                $q_{12}$ &  $7$ & $6$ & $9$ & C
                \end{tabular}}
    \subcaptionbox[table]{$(a,b,c)=(4,4,c)$ for $c\in\{4,5\}$ and $6$ questions.}{
    		\raisebox{\dimexpr\ht9-\height}[\ht9][\dp9]{
                \begin{tabular}{c || c | c | c}
                Peg & $1$ & $2$ & $3$ 
                \\
                \hline
                \hline
                $q_1$ & $1$ & $1$ & $1$ 
                \\
                \hline
                $q_2$ & $1$ & $1$ & $2$ 
                \\
                \hline\hline
                $q_3$ & $1$ & $2$ & $3$ 
                \\
                \hline
                $q_4$ & $2$ & $3$ & $3$ 
                \\
                \hline\hline
                $q_5$ & $3$ & $4$ & $4$ 
                \\
                \hline
                $q_6$ & $4$ & $4$ & $4$ 
                \end{tabular}}}
    \hspace*{\fill}
	\subcaptionbox[table]{$(a,b,c)=(4,6,6)$, yielding $x=0$ and $y=z=4$, and thus $8$ questions. (Note that for $(4,6,7)$ the condition $ 3a \ge b+c$ does not hold.)}{
    		\raisebox{\dimexpr\ht9-\height}[\ht9][\dp9]{
                \begin{tabular}{c || c | c | c || c }
                Peg & $1$ & $2$ & $3$ & Type
                \\
                \hline
                \hline
                $q_1$ & $1$ & $1$ & $1$ & B
                \\
                \hline
                $q_2$ & $1$ & $2$ & $2$ & B
                \\
                \hline
                $q_3$ & $2$ & $3$ & $2$ & B
                \\
                \hline
                $q_4$ & $2$ & $4$ & $1$ & B
                \\
                \hline\hline
                $q_5$ & $3$ & $5$ & $3$ & C
                \\
                \hline
                $q_6$ & $3$ & $6$ & $4$ & C
                \\
                \hline
                $q_7$ & $4$ & $6$ & $5$ & C
                \\
                \hline
                $q_8$ &  $4$ & $5$ & $6$ & C
                \end{tabular}}}
    \hspace*{\fill}
	\subcaptionbox[table]{$(a,b,c)=(7,8,c)$ for $c\in\{9,10\}$, yielding $x=2$, $y=4$, $z=6$ and thus $12$ questions.}{
                \box9}
    \caption{Three examples of strategies for $(a,b,c)$-Mastermind with 
    $3a \ge b+c$ and $a+b+c \equiv 0,1 \pmod 4 $.\label{tab3abc}}
\end{table}

\begin{lemma}
    \label{lem:0mod4}
    Strategy~\ref{strat1a} is feasible for $(a,b,c)$-Mastermind where $a \leq b \leq c$, $3a \geq b+c$, $a+b+c \equiv 0,1 \pmod{4}$ and $(a,b,c) \not \in \{(4,4,4),(4,4,5)\}$.
\end{lemma}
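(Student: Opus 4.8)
The plan is to invoke Lemma~\ref{lem:super_lemma}: since Strategy~\ref{strat1a} uses only questions of types $A$, $B$ and $C$, it suffices to show that the strategy avoids each of the twelve forbidden patterns. I would carry this out by exploiting the three structural facts recorded in Remark~\ref{rem1a}, namely that the strategy decomposes into (at most) three blocks, one cycle per type; that a block contains a double-neighboring pair only when it consists of exactly two questions, and that there are at most two such pairs in total; and that the only possible missing color lives on the third peg (and occurs only when $a+b+c \equiv 1 \pmod 4$). One extra observation drives much of the argument: any \emph{sequence} of questions, being a chain of consecutive neighboring questions, must lie entirely within one connected component of the strategy graph, hence within a single block; since each block is monochromatic, every sequence consists of questions of one single type.

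I would first dispose of the patterns mentioning missing colors. For the bookkeeping, recall that the single pegs of $A$, $B$, $C$ are the pegs $1$, $2$, $3$, respectively, so that for each type the two double pegs always include one of the first two pegs. Pattern~1 is excluded because at most one color is missing overall; pattern~2 because pegs $1$ and $2$ have no missing color. Patterns~3 and~9 require a missing color on \emph{both} double pegs of some type $T$, but one of those pegs is always among the first two and hence has no missing color. Pattern~11 requires missing colors on the single pegs of two distinct types, i.e.\ on two distinct pegs, which is impossible since only peg~$3$ can be missing a color.

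Next I would handle the patterns that call for a sequence mixing two or three distinct types, namely patterns~5, 6, 7 and~8. By the monochromatic-block observation above, no sequence can contain two different types, so each of these patterns is vacuously avoided.

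Finally I would treat the three genuinely double-neighboring patterns. A sequence $TT$ of double-neighboring questions occurs exactly when the $T$-block has two questions, in which case it is the unique such pair in that block. This at once rules out pattern~10 (two different double-neighboring $TT$-pairs of the same type could only come from two distinct $T$-blocks, but there is only one), and pattern~4 (a double-neighboring $TT$-pair forces the $T$-block to have size two, leaving no room for a $TTT$-sequence in the same, and only, $T$-block). Pattern~12 asks for double-neighboring pairs of all three types at once, i.e.\ $x=y=z=2$; a short computation from the definitions of $x,y,z$ shows this forces $a=b=c'=4$, that is $(a,b,c)\in\{(4,4,4),(4,4,5)\}$, precisely the triples excluded by hypothesis, matching the ``at most two double-neighboring pairs'' clause of Remark~\ref{rem1a}. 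The main obstacle is this last interplay in patterns~4, 10 and~12 between the cycle structure of the blocks and the double-neighboring pairs: the crux is the fact, supplied by Remark~\ref{rem1a}(b), that double-neighboring pairs arise only inside size-two blocks, together with the verification that the excluded triples $(4,4,4)$ and $(4,4,5)$ are exactly those for which pattern~12 could otherwise occur.
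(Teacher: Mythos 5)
Your proposal is correct and follows essentially the same route as the paper's own proof: both invoke Lemma~\ref{lem:super_lemma}, dispose of patterns 1, 2, 3, 9, 11 via the location of the (at most one) missing color, of patterns 5--8 via the monochromatic block structure, of patterns 4 and 10 via the fact that double-neighboring pairs only arise in size-two blocks, and of pattern 12 via the same computation showing $x=y=z=2$ forces $(a,b,c)\in\{(4,4,4),(4,4,5)\}$. No gaps to report.
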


\begin{proof}
We use Lemma~\ref{lem:super_lemma} to prove that 
Strategy~\ref{strat1a} is feasible.
By Remark~\ref{rem1a}(a), the condition of having only questions of type $A$, $B$ or $C$ is fulfilled, and by Remark~\ref{rem1a}(c) the patterns 1, 2, 3, 9 and 11 do not occur.

Since no questions of different types are neighboring to each other, the 
patterns 5, 6, 7 and 8 do not occur.

If two questions are double-neighboring, then the strategy contains no other questions of the same type. Therefore, the patterns 4 and 10 do not occur.

Lastly, note that since $(a,b,c) \not \in \{(4,4,4),(4,4,5)\}$ the strategy does not consist of three double-neighboring pairs of questions.
To show this, let $x=y=z=2$ hold.
Indeed, since $x = 2a - (a+b+c')/2 = 2a - (x+y+z)$, it follows that $a = (2x+y+z)/2 = 4$. 
Similarly, it follows that $b = 4$ and $c' = 4$. 
Since we excluded this case, there are at most two double-neighboring questions and pattern 12 does not occur.

Therefore, none of the forbidden patterns of 
Lemma~\ref{lem:super_lemma} occurs, and $Q$ is feasible.
\end{proof}

\begin{remark}
    As stated in Lemma~\ref{lem:0mod4}, Strategy~\ref{strat1a} cannot be applied to $(4,4,4)$ and 
    $(4,4,5)$. Note, however, that these cases can nevertheless be solved with~$6$ questions. Indeed, the 
    strategy described in Table~\ref{tab3abc}(a) is feasible for both of these cases, too.
    For the feasibility we refer to a computer program which checks the feasibility 
    of strategies by exhaustive search~\cite{SC22}.
\end{remark}

\subsection{Strategy for the case $a+b+c \equiv 2,3 \pmod 4 $}
\label{ssec_23}

We now provide a strategy for the case $3a \geq b+c$ and $a+b+c \equiv 2,3 \pmod 4 $.
Observe that in this case, $x$, $y$ and $z$ are odd. 

As before, the strategy has $x$~questions of type~$A$, $y$~questions of type~$B$, 
and $z$~questions of type~$C$. The first two blocks consist of $x-1$ and $y-1$~questions of type $A$ and $B$, respectively. The third block starts with one question of type~$A$ and one of type~$B$, and continues with $z$~questions of type $z$.

\begin{strategy}
({$\lfloor \frac{a+b+c}{2}\rfloor$}-strategy for $(a,b,c)$-Mastermind 
with $3a \geq b+c$, 
and $a+b+c \equiv 2,3 \pmod{4}$)
\label{strat1b}
\begin{itemize}[wide=0pt]
\item[First peg:]
\mbox{}
\begin{itemize}
\item First block: colors $1,\, 2, \, \dots, \, x-1$.
\item Second block: colors $x,\, x, \, x+1, \, x+1, \,
    \dots, \, x-1+(y-1)/2, \, x-1+(y-1)/2 $.
\item Third block: colors $x+(y-1)/2,\, x+(y-1)/2+1, 
\, x+(y-1)/2+1, \, x+(y-1)/2+2, \, \, x+(y-1)/2+2, \,
    \dots, \, a, \, a $.
\end{itemize}
\item[Second peg:]
\mbox{}
\begin{itemize}
\item First block: colors $1,\, 1,\, 2,\, 2, \, \dots, \, (x-1)/2,\, (x-1)/2$.
\item Second block: colors $(x-1)/2+1,\, (x-1)/2+2, \, 
    \dots, \, (x-1)/2 +y-2, \, (x-1)/2+y-1 $.
\item Third block: colors $(x-1)/2+y,\, (x-1)/2+y+1, \, 
(x-1)/2+y+2,\, (x-1)/2+y+2, \, 
    \dots, \, b, \, b, \, (x-1)/2+y $.
\end{itemize}

\pagebreak[3]

\item[Third peg:]
\mbox{}
\begin{itemize}
\item First block: colors $1,\, 2, \, 2, \, 
    \dots, \, (x-1)/2, \, (x-1)/2, \, 1 $.
\item Second block: colors $(x-1)/2+1,\, (x-1)/2+2, \, (x-1)/2+2, \, \dots, \, (x-1)/2+(y-1)/2,\, (x-1)/2+(y-1)/2, \, (x-1)/2+1$.
\item Third block: colors $(x-1)/2+(y-1)/2+1,\, (x-1)/2+(y-1)/2+1, \, (x-1)/2+(y-1)/2+2, \, (x-1)/2+(y-1)/2+3, \, \dots, \,  c'-1,\, c'$.
\end{itemize}
\end{itemize}
\end{strategy}

\begin{remark}
\label{rem2a}
Strategy~\ref{strat1a} has the following characteristics:
\begin{description}
\item[(a)]
It contains three (possibly empty) blocks, each forming a cycle of questions. The first consists of $x-1$ questions of type $A$ and
the second of $y-1$ questions of type $B$. The third block contains 
one question of type $A$, one of type $B$, and $z$ of type $C$.
\item[(b)] The first and the second block can each contain a double-neighboring pair of questions, but there cannot be any other double-neighboring pairs.
\item[(c)] There is no missing color on the first two pegs, and 
only one missing color on the third peg for the case $a+b+c \equiv 3 \pmod{4}$.
\end{description}
\end{remark}

\begin{table}
\setbox9=\hbox{\begin{tabular}{c || c | c | c || c}
            Peg & $1$ & $2$ & $3$ & Type
            \\
            \hline
            \hline
            $q_1$ & $1$ & $1$ & $1$ & A
            \\
            \hline
            $q_2$ & $2$ & $1$ & $1$ & A
            \\
            \hline\hline
            $q_3$ & $3$ & $2$ & $2$ & B
            \\
            \hline
            $q_4$ & $3$ & $3$ & $3$ & B
            \\
            \hline
            $q_5$ & $4$ & $4$ & $3$ & B
            \\
            \hline
            $q_6$ & $4$ & $5$ & $2$ & B
            \\
            \hline\hline
            $q_7$ & $5$ & $6$ & $4$ & A
            \\
            \hline
            $q_8$ & $6$ & $7$ & $4$ & B
            \\
            \hline
            $q_9$ & $6$ & $8$ & $5$ & C
            \\
            \hline
            $q_{10}$ & $7$ & $8$ & $6$ & C
            \\
            \hline
            $q_{11}$ & $7$ & $9$ & $7$ & C
            \\
            \hline
            $q_{12}$ & $8$ & $9$ & $8$ & C
            \\
            \hline
            $q_{13}$ & $8$ & $6$ & $9$ & C
            \end{tabular}}
\hspace*{\fill}
    \subcaptionbox[table]{$(a,b,c)=(3,3,c)$ for $c\in\{4,5\}$, yielding $x=y=1$ and $z=3$, and thus $5$ questions.}[.45\linewidth]{
            \centering
    		\raisebox{\dimexpr\ht9-\height}[\ht9][\dp9]{
            \begin{tabular}{c || c | c | c || c }
            Peg & $1$ & $2$ & $3$ & Type
            \\
            \hline
            \hline
            $q_1$ & $1$ & $1$ & $1$ & A
            \\
            \hline
            $q_2$ & $2$ & $2$ & $1$ & B
            \\
            \hline
            $q_3$ & $2$ & $3$ & $2$ & C
            \\
            \hline
            $q_4$ & $3$ & $3$ & $3$ & C
            \\
            \hline
            $q_5$ &  $3$ & $1$ & $4$ & C
            \end{tabular}}}
\hspace*{\fill}
    \subcaptionbox[table]{$(a,b,c)=(8,9,c)$ for $c\in\{9,10\}$, yielding $x=3$ and $y=z=5$, and thus $13$ questions.}[.45\linewidth]{
            \centering
            \begin{tabular}{c || c | c | c || c}
            Peg & $1$ & $2$ & $3$ & Type
            \\
            \hline
            \hline
            $q_1$ & $1$ & $1$ & $1$ & A
            \\
            \hline
            $q_2$ & $2$ & $1$ & $1$ & A
            \\
            \hline\hline
            $q_3$ & $3$ & $2$ & $2$ & B
            \\
            \hline
            $q_4$ & $3$ & $3$ & $3$ & B
            \\
            \hline
            $q_5$ & $4$ & $4$ & $3$ & B
            \\
            \hline
            $q_6$ & $4$ & $5$ & $2$ & B
            \\
            \hline\hline
            $q_7$ & $5$ & $6$ & $4$ & A
            \\
            \hline
            $q_8$ & $6$ & $7$ & $4$ & B
            \\
            \hline
            $q_9$ & $6$ & $8$ & $5$ & C
            \\
            \hline
            $q_{10}$ & $7$ & $8$ & $6$ & C
            \\
            \hline
            $q_{11}$ & $7$ & $9$ & $7$ & C
            \\
            \hline
            $q_{12}$ & $8$ & $9$ & $8$ & C
            \\
            \hline
            $q_{13}$ & $8$ & $6$ & $9$ & C
            \end{tabular}}
\hspace*{\fill}
\caption{Two examples of strategies for $(a,b,c)$-Mastermind with 
    $3a \ge b+c$ and $a+b+c \equiv 2,3 \pmod 4 $.\label{tab4abc}}
\end{table}

\begin{lemma}
    \label{lem:2mod4}
    Strategy~\ref{strat1b} is feasible for $(a,b,c)$-Mastermind where $a \leq b \leq c$, $3a \geq b+c$ and $a+b+c \equiv 2,3 \pmod{4}$.
\end{lemma}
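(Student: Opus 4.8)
The plan is to prove feasibility exactly as in Lemma~\ref{lem:0mod4}, namely by invoking Lemma~\ref{lem:super_lemma}: I would verify that Strategy~\ref{strat1b} consists only of questions of type $A$, $B$ and $C$ and then avoids all twelve forbidden patterns. The type restriction is immediate from Remark~\ref{rem2a}(a), so the whole proof reduces to a pattern-by-pattern check. Throughout I would lean on the structural facts recorded in Remark~\ref{rem2a}: the strategy splits into three blocks, the first a cycle of $x-1$ type-$A$ questions, the second a cycle of $y-1$ type-$B$ questions, and the third a single cycle whose cyclic type sequence is $A,B,C,\dots,C$ (one $A$, one $B$, then $z$ consecutive $C$'s, with $z\ge 1$ odd); there are no missing colors on the first two pegs and at most one missing color on the third; and the only double-neighboring pairs are the (at most one) type-$A$ pair in the first block and the (at most one) type-$B$ pair in the second block.

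The patterns driven by missing colors I would dispose of immediately. Since pegs $1$ and $2$ have no missing color and peg $3$ has at most one, patterns~1 and~2 cannot occur. For every type $T$ at least one of its two double pegs is peg~$1$ or peg~$2$, so patterns~3 and~9, which require missing colors on \emph{both} double pegs of $T$, are impossible. The single peg of $A$ is peg~$1$ and that of $B$ is peg~$2$; as the only available double-neighboring pairs are of types $A$ and $B$, pattern~11 would demand missing colors on pegs~$1$ and~$2$, and pattern~8 a missing color on peg~$1$ or peg~$2$, all excluded.

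The patterns built from single-type sequences and double-neighboring pairs I would handle by block counting. There is never a double-neighboring $CC$ pair, and there is at most one double-neighboring pair of each of the types $A$ and $B$, so patterns~10 and~12 cannot occur. For pattern~4, whenever the first block realizes a double-neighboring $AA$ pair it has exactly two questions (so no $AAA$ sequence arises there), the unique type-$A$ question of the third block is flanked by a $B$ and a $C$, and any longer first block is a cycle of length $\ge 4$ whose consecutive questions are only single-neighboring; the analogous statement holds for $B$, while type $C$ admits no double-neighboring pair at all. Hence pattern~4 is avoided.

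The genuinely new work, and the step I expect to be the main obstacle, concerns the mixed-type patterns~5,~6 and~7, because---unlike in Strategy~\ref{strat1a}, where the third block was pure type $C$---the third block here contains neighboring questions of distinct types. Here I would exploit that all cross-type adjacencies live in the single cyclic sequence $A,B,C,\dots,C$. Reading this cycle, the only maximal runs of equal types are the single $A$, the single $B$, and the run of $z$ consecutive $C$'s; consequently the only mixed two-term sequences occurring are $AB$, $BC$ and $CA$, each exactly once, which rules out pattern~7. Since $A$ and $B$ each occur only once while only $C$ appears repeatedly, no subsequence of the form $T_1T_1T_2T_2$ or $T_1T_1T_2T_3T_3$ with the required \emph{distinct} paired types can appear, so patterns~5 and~6 are avoided as well. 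With all twelve patterns excluded, Lemma~\ref{lem:super_lemma} yields the feasibility of Strategy~\ref{strat1b}.
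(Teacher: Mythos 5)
Your proposal is correct and follows essentially the same route as the paper: both invoke Lemma~\ref{lem:super_lemma} and rule out the twelve patterns one by one using the structural facts of Remark~\ref{rem2a} (no missing colors on pegs~1 and~2, double-neighboring pairs only of types $A$ and $B$, and the third block being the unique place where distinct types are adjacent, in the cyclic order $A,B,C,\dots,C$). Your treatment of patterns~4--7 is somewhat more explicit than the paper's, but the underlying observations are the same, so there is nothing to add.
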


\begin{proof}
We will use Lemma~\ref{lem:super_lemma} to prove that Strategy~\ref{strat1b} is feasible.
By Remark~\ref{rem2a}(a), the condition of having only questions of 
types $A$, $B$ or $C$ is fulfilled.
By Remark~\ref{rem2a}(c), none of the patterns 1, 2, 3, 9 and 11 can occur, and Remark~\ref{rem2a}(b) excludes pattern 12. Moreover, since there are no double 
neighboring questions of type $C$, and the only possible missing color is on the third peg, which is the single peg of type $C$, pattern 8 does not occur.

Since all the questions of type~$A$ except one form a cycle in the first block, and all those of type~$B$ except one form a cycle in the second block, the patterns~4 and~10 do not occur.

Lastly, the only question of type~$A$ that is not neighboring to another one of type~$A$ is 
in the cycle of the third block, which contains only one question of type~$A$. Similarly, 
the only question of type~$B$ that is not neighboring to another one is in the cycle of 
the third block, which contains only one question of type~$B$. 
Therefore, none of the patterns 5, 6 and 7 occurs.

In summary, we have excluded all of the forbidden patterns of Lemma~\ref{lem:super_lemma}, which proves that the
strategy is feasible.
\end{proof}

This lemma allows us to conclude the proof of Theorem~\ref{thm:3a_larger}.

\begin{proof}[Proof of Theorem~\ref{thm:3a_larger}.]
    Recall that $a \leq b \leq c$ and $3a \le b+c$.

    Our computer program~\cite{SC22} shows that the strategy of Table~\ref{tab3abc}(a), which has $\lfloor \frac{a+b+c}{2} \rfloor=6$ questions, is feasible for the cases $(4,4,4)$ and $(4,4,5)$.
    Lemmas~\ref{lem:0mod4} and~\ref{lem:2mod4} prove for all other cases 
    that Strategy~\ref{strat1a} is feasible for $ a+b+c \equiv 0,1 \pmod{4} $ 
    and Strategy~\ref{strat1b} is feasible for $ a+b+c \equiv 2,3 \pmod{4}$, 
    both of which consist of $\lfloor \frac{a+b+c}{2} \rfloor$ questions. This covers all cases, showing that $f(a,b,c) \le \lfloor \frac{a+b+c}{2} \rfloor$.
\end{proof}

Note that there is a gap of $1$ between the lower bound of Theorem~\ref{thm_nearopt} 
and the upper bound given by Theorem~\ref{thm:3a_larger}. Indeed, 
in the following sections, we will show that it is possible to obtain better 
strategies in some cases. 

\section{The cases of $\mathbf{(a,a,2a-1)}$-Mastermind and $\mathbf{(a,a,2a)}$-Mastermind} 
\label{sec_aa2a}

In this section, we will give a strategy with $2a-1$ questions for 
both $(a,a,2a-1)$-Mastermind and for $(a,a,2a)$-Mastermind, 
and we will show that this strategy is optimal in both cases. 
Note that in the case $(a,a,2a-1)$ the strategy of Section~\ref{ssec_23} already has $2a-1$ questions, and the strategy presented in this section gives an 
alternative way to reach a feasible strategy with this number of questions.

\begin{strategy}
($ (2a-1)$-strategy for $(a,a,2a-i)$-Mastermind, where $i\in\{0,1\}$)
\label{strat2}
\begin{enumerate}[topsep=0pt]
\item First peg: colors
$1, \, 1,\, 2, \, 2, \, \dots, \, a-1, \, a-1, a $.
\item Second peg: colors $1,\, 2, \, 2, \, \dots, \, a-1, \, a-1, \, a, \, a $.
\item Third peg: colors
$1,\, 2, \, 3, \, \dots, \, 2a-2, \, 2a-1 $.
\end{enumerate}
\end{strategy}

\begin{remark}
Strategy~\ref{strat2} has the following characteristics:
\begin{description}
\item[(a)] It consists of one path consisting of a question $q_1$ of type $D$, $2a-3$ questions $q_2, q_3,\dots,q_{2a-2} $ of type~$C$, and a final question $q_{2a-1}$ of type~$E$.
\item[(b)] No double-neighboring questions occur.
\item[(c)] For the case $c=2a-1$, there is no missing color. 
\item[] For the case $c=2a$, there is only one missing color, namely color $2a$ on the third peg. 
\end{description}
\end{remark}

One example is given in Table~\ref{tabaa2a}(a) 
(note that the example of Table~\ref{tabaa2a}(b) is based on Table~\ref{tabaa2a}(a)
and will be referred to in Section~\ref{ssec_2b<=c} about the case $ 3a < b + c $ and 
$ 2b \le  c$).

\begin{table}
\setbox9=\hbox{\begin{tabular}{c || c | c | c }
                Peg & $1$ & $2$ & $3$ 
                \\
                \hline
                \hline
                $q_1$ & $1$ & $1$ & $1$
                \\
                \hline
                $q_2$ & $1$ & $2$ & $2$
                \\
                \hline
                $q_3$ & $2$ & $2$ & $3$
                \\
                \hline
                $q_4$ & $2$ & $3$ & $4$
                \\
                \hline
                $q_5$ & $3$ & $3$ & $5$
                \\
                \hline
                $q_6$ & $3$ & $4$ & $6$
                \\
                \hline
                $q_7$ & $3$ & $4$ & $7$
                \\
                \hline
                $q_8$ &  $3$ & $5$ & $8$
                \\
                \hline
                $q_9$ & $3$ & $5$ & $9$
                \\
                \hline
                $q_{10}$ & $3$ & $5$ & $10$
                \end{tabular}}
    \subcaptionbox[table]{$(a,b,c)=(5,5,c)$ for $c\in\{9,10\}$, yielding $9$ questions.\label{subtabaa2a}}[.535\linewidth]{
            \centering
    		\raisebox{\dimexpr\ht9-\height}[\ht9][\dp9]{
                \begin{tabular}{c || c | c | c || c}
                Peg & $1$ & $2$ & $3$ & Type
                \\
                \hline
                \hline
                $q_1$ & $1$ & $1$ & $1$ & $D$
                \\
                \hline
                $q_2$ & $1$ & $2$ & $2$ & $C$
                \\
                \hline
                $q_3$ & $2$ & $2$ & $3$ & $C$
                \\
                \hline
                $q_4$ & $2$ & $3$ & $4$ & $C$
                \\
                \hline
                $q_5$ & $3$ & $3$ & $5$ & $C$
                \\
                \hline
                $q_6$ & $3$ & $4$ & $6$ & $C$
                \\
                \hline
                $q_7$ & $4$ & $4$ & $7$ & $C$
                \\
                \hline
                $q_8$ &  $4$ & $5$ & $8$ & $C$
                \\
                \hline
                $q_9$ & $5$ & $5$ & $9$ & $E$
                \end{tabular}}}
\hspace*{\fill}
    \subcaptionbox[table]{$(a,b,c)=(3,5,11)$, yielding $10$ questions.}[.43\linewidth]{
            \centering
    		\raisebox{\dimexpr\ht9-\height}[\ht9][\dp9]{            
                \begin{tabular}{c || c | c | c }
                Peg & $1$ & $2$ & $3$ 
                \\
                \hline
                \hline
                $q_1$ & $1$ & $1$ & $1$
                \\
                \hline
                $q_2$ & $1$ & $2$ & $2$
                \\
                \hline
                $q_3$ & $2$ & $2$ & $3$
                \\
                \hline
                $q_4$ & $2$ & $3$ & $4$
                \\
                \hline
                $q_5$ & $3$ & $3$ & $5$
                \\
                \hline
                $q_6$ & $3$ & $4$ & $6$
                \\
                \hline
                $q_7$ & $3$ & $4$ & $7$
                \\
                \hline
                $q_8$ &  $3$ & $5$ & $8$
                \\
                \hline
                $q_9$ & $3$ & $5$ & $9$
                \\
                \hline
                $q_{10}$ & $3$ & $5$ & $10$
                \end{tabular}}}
    \caption{An example of a strategy for $(a,a,2a-1)$-Mastermind and $(a,a,2a)$-Mastermind, and one example for the case $3a < b+c$ and $2b \leq c$.\label{tabaa2a}}
\end{table}

\begin{theorem}
\label{thm_aa2a}
Let $a \in \NN$. Then it holds that $f(a,a,2a)=2a-1$. 
\end{theorem}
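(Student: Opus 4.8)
The plan is to prove the two matching bounds $f(a,a,2a)\ge 2a-1$ and $f(a,a,2a)\le 2a-1$ separately. The lower bound is immediate: since $a+b+c=4a$ is even, Theorem~\ref{thm_nearopt} gives $f(a,a,2a)\ge\lfloor\frac{4a}{2}\rfloor-1=2a-1$. Observe that the general constructions of Section~\ref{sec_ge} only yield $f(a,a,2a)\le\lfloor\frac{4a}{2}\rfloor=2a$, one question too many, so the real content is the upper bound $f(a,a,2a)\le 2a-1$, which has to come from the sharper Strategy~\ref{strat2}. Hence the whole task reduces to proving that Strategy~\ref{strat2} with $c=2a$, which has exactly $2a-1$ questions, is feasible.

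First I would write the questions explicitly. Indexing the path $q_1,\dots,q_{2a-1}$ from its type-$D$ end, one checks that $q_k=(\lceil k/2\rceil,\ \lfloor k/2\rfloor+1,\ k)$ for $k\in[2a-1]$; in particular $q_1=(1,1,1)$ is of type $D$ and $q_{2a-1}=(a,a,2a-1)$ is of type $E$. Since Strategy~\ref{strat2} uses the types $C$, $D$, $E$ rather than only $A$, $B$, $C$, Lemma~\ref{lem:super_lemma} does not apply, so feasibility must be established directly. I would do this by showing that the answer map $s\mapsto(g(s,q_1),\dots,g(s,q_{2a-1}))$ is injective on the secrets. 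The key structural fact is that the third peg uses the \emph{distinct} colors $1,\dots,2a-1$ (with color $2a$ missing), so the third-peg contribution is just an indicator: writing $B(x,y)_k:=\mathbf{1}\{\lceil k/2\rceil=x\}+\mathbf{1}\{\lfloor k/2\rfloor+1=y\}$ for the two-peg answer and $e_z$ for the unit vector in position $z$ (with $e_{2a}:=0$, since no question has third color $2a$), the full answer vector factors as $A(s)=B(x,y)+e_z$.

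With this decomposition the injectivity statement becomes: if $A(s)=A(s')$ then $B(x,y)-B(x',y')=e_{z'}-e_z$, a vector with at most one entry $+1$, at most one entry $-1$, and all others $0$. Writing $B(x,y)=U(x)+V(y)$, where $U(x)$ has its ones in positions $\{2x-1,2x\}\cap[2a-1]$ and $V(y)$ in positions $\{2y-2,2y-1\}\cap[2a-1]$, the difference is a signed combination supported on these (at the boundaries $x=a$ or $y=1$, fewer) positions. If $(x,y)=(x',y')$ the difference vanishes, forcing $e_{z'}=e_z$ and hence $z=z'$, so $s=s'$. The remaining and genuinely technical step is to rule out $(x,y)\neq(x',y')$: I would show that the only way $U(x)-U(x')+V(y)-V(y')$ can collapse to a vector with at most one $+1$ and one $-1$ is through massive cancellation, and then trace the forced coincidences of supports—keeping careful track of the odd/even parities of $2x-1,2x,2y-2,2y-1$ and of the boundary cases $x=a$, $y=1$—back to $x'=x$ and $y'=y$, a contradiction. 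I expect this support-cancellation bookkeeping to be the main obstacle; once feasibility is in hand, the two bounds combine at once to give $f(a,a,2a)=2a-1$.
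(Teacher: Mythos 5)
Your reduction is correct as far as it goes: the lower bound via Theorem~\ref{thm_nearopt} is valid (the paper instead uses Corollary~\ref{cor_c-1}, which gives the same value $2a-1$ here), the explicit form $q_k=(\lceil k/2\rceil,\lfloor k/2\rfloor+1,k)$ matches Strategy~\ref{strat2}, and the decomposition $A(s)=U(x)+V(y)+e_z$ of the answer vector is accurate. But the proof has a genuine gap: the entire content of the upper bound is the claim that $U(x)-U(x')+V(y)-V(y')=e_{z'}-e_z$ forces $(x,y)=(x',y')$, and you do not prove it --- you only announce that you ``would show'' it by tracking support cancellations. This is not a routine omission: the supports $\{2x-1,2x\}$ and $\{2y-2,2y-1\}$ can overlap (when $y=x$ or $y=x+1$), the boundary cases $x=a$ and $y=1$ shrink the supports to single positions, and the right-hand side can itself be zero, a single $\pm1$, or a $+1$/$-1$ pair depending on whether $z$ or $z'$ equals the missing color $2a$. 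Several of these configurations come close to colliding, and ruling them all out is precisely the case analysis that constitutes the proof. As submitted, the hardest step is asserted rather than carried out.

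For comparison, the paper avoids this bookkeeping with a different idea you may want to adopt: it augments $Q$ by the single question $(a,1,2a+1)$, which closes the path into a cycle in which every question becomes a $(2,2,1)$-question of type $C$, so that Lemma~\ref{lem:super_lemma} \emph{does} apply and certifies feasibility of the augmented strategy for $(a,a,2a+1)$-Mastermind. Any pair of secrets of $(a,a,2a)$-Mastermind not separated by $Q$ must then be separated only by the added question, and since the color $2a+1$ never occurs as a secret, this pins down very specific relations among the secrets' colors ($y_1=1$, $y_2\neq1$, etc.), after which a short targeted case analysis on $q_1,q_2,q_{2i-2},q_{2i-1}$ yields a contradiction. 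Either route can work, but yours still requires you to actually execute the support-cancellation argument before the theorem is proved.
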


\begin{proof}

First note that $f(a,a,2a) \ge 2a-1$ directly follows from 
Corollary~\ref{cor_c-1}. It only remains to show that $f(a,a,2a) \le 2a-1$ holds. To do so, we will show that Strategy~\ref{strat2} is feasible.

Let $Q$ be the set of questions of Strategy~\ref{strat2}.
We will use Lemma~\ref{lem:super_lemma} to prove by contradiction that this strategy is feasible
for $(a,a,2a)$-Mastermind. For this sake, assume that there are two secrets $s_1 = (x_1,y_1,z_1)$ and 
$s_2 = (x_2,y_2,z_2)$ that are not distinguished by $Q$.

Consider the strategy $Q' = Q \cup \{(a,1,2a+1)\}$ 
for $(a,a,2a+1)$-Mastermind. 
As it has only questions of type~$C$ 
(note that the last question is neighboring both to $q_1$ and $q_{2a-1}$),
the initial condition of Lemma~\ref{lem:super_lemma} is fulfilled.
As only the third peg has a missing color, and it has only one missing color, 
the patterns~1 and~2 do not occur.
As no double-neighboring questions occur, 
the patterns~4, 8, 9, 10, 11, and 12 do not occur.
Only questions of type~$C$ occur more than once, so the 
patterns~5, 6 and 7 do not occur.
Lastly, since only questions of type~$C$ occur more than once 
and there is no missing color on the first and second peg, 
pattern 3 does not occur.
Therefore, all the forbidden patterns of Lemma~\ref{lem:super_lemma} 
do not occur, and thus the strategy is feasible
for $(a,a,2a+1)$-Mastermind. 

In particular, $s_1$ and $s_2$ are valid secrets of $(a,a,2a+1)$-Mastermind 
and must be distinguished by $Q'$. 
Since we assumed  that $s_1$ and $s_2$ are not distinguished by $Q$, 
the only question that can distinguish them is $(a,1,2a+1)$. 
Moreover, since $2a+1$ is not a color of $(a,a,2a)$-Mastermind, 
it holds that one of the secrets gets a black peg from the 
first or second peg while the other one does not. W.l.o.g., 
we can then assume that $y_1 = 1$ and $y_2 \neq 1$. However, since we 
assumed that $s_1$ and $s_2$ are not distinguished 
by $Q$, $s_1$ and $s_2$ must receive the same number of black 
pegs from the question $q_1=(1,1,1) \in Q$. Since $s_1$ gets one black peg from it, 
we have $x_2 = 1$ and $x_1 \neq 1$, 
or $z_2 = 1$  and $z_1 \neq 1$.

\pagebreak[3]

\begin{description}
    \item[Case 1:] $x_2 = 1$ and $x_1 \neq 1$

\nopagebreak
    
        The question $q_2 = (1,2,2)$ gives at least one black peg for $s_2$, 
        because $x_2 = 1$, and, since it cannot distinguish $s_1$ 
        and $s_2$, it must also give at least one black peg for $s_1$. 
        Moreover, $x_1 \neq 1$ and $y_1 = 1$, so we must have $z_1 = 2$.
        
        Now consider the value of $y_2$. 
        
        If $y_2 = 2$, then $q_2$ gives 
        two black pegs for $s_2$ and only one for $s_1$, distinguishing the 
        two secrets. 
        
        If $y_2 = i \not \in [2]$, then this 
        color occurs in two questions of $Q$, $q_{2i-2} = (i-1,i,2i-2)$ 
        and $q_{2i-1} = (i,i,2i-1)$. Since $z_1 = 2$ and  $x_1$ cannot be 
        both $i-1$ and $i$, at least one of these questions will give 
        zero black pegs for $s_1$, distinguishing $s_1$ and $s_2$. 

\pagebreak[3]
        
    \item[Case 2:] $z_2 = 1$ and $z_1 \neq 1$

        \begin{description}
        \item[Case 2.1:] $x_1 = x_2$. 
        
        Consider the questions containing the color 
        $y_2 = i$. As before, it occurs in the questions  
        $q_{2i-2} = (i-1,i,2i-2)$ and $q_{2i-1} = (i,i,2i-1)$. 
        Since $x_1 = x_2$, the first peg gives the same number of black
        pegs for the two secrets for each question. As, furthermore, $z_1$ can occur in at most 
        one of $q_{2i-2}$ and $q_{2i-1}$, at least one of these two questions 
        gives more black pegs for $s_2$ than for $s_1$.
        
        \item[Case 2.2:] $x_1 \not= x_2$. 
        
        Consider the value of $x_1$. 
        
        If $x_1 = 1$, 
        then $q_1$ gives two black pegs for $s_1$ and only one 
        for $s_2$, distinguishing $s_1$ and $s_2$. 
        
        If  $x_1 = i \not \in \{1,a\}$, then there are two 
        questions containing $x_1$, namely $q_{2i-1} = (i,i,2i-1)$ 
        and $q_{2i} = (i,i+1,2i)$, and at least one of them 
        gives no black pegs for $s_2$, but at least one black peg for~$s_1$.         
        
        Lastly, if $x_1 = a$, then the only way 
        for $s_1$ and $s_2$ to not be distinguished by 
        $q_{2a-1}$ is that $y_2 = a$ and $y_1 \neq a$. 
        For $s_1$ and $s_2$ to not be distinguished by 
        $q_{2a-2}$, which gives at least one black peg for $s_2$, we must have $z_1 = 2a-2$. 
        Thus, no matter the value of $x_2$, the questions containing it will 
        give more black pegs for $s_2$ than for~$s_1$.
        \end{description}
\end{description}

In conclusion, $s_1$ and $s_2$ are distinguished in all cases, and therefore $Q$ is a feasible strategy. This concludes the proof.
\end{proof}

\begin{theorem}
\label{thm_aa2a-1}
Let $a \in \NN $.
Then $ f(a,a,2a-1) = 2a-1 $ holds.
\end{theorem}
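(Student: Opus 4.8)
The plan is to establish the two bounds separately. For the lower bound, I would apply Corollary~\ref{cor_c-1}, which gives $f(a,a,2a-1) \ge (2a-1)-1 = 2a-2$. This is off by one from the target, so I need a sharper argument to rule out $|Q| = 2a-2$. Here I would invoke the general lower bound of Theorem~\ref{thm_nearopt}: since $\lfloor \frac{a+a+(2a-1)}{2}\rfloor - 1 = \lfloor \frac{4a-1}{2}\rfloor - 1 = (2a-1) - 1 = 2a-2$, Theorem~\ref{thm_nearopt} alone also only yields $2a-2$. So neither generic bound suffices on its own. The cleanest route to the sharp lower bound is to reuse the reasoning already invested in the $(a,a,2a)$ case. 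Since increasing the last color count by one can only increase the metric dimension, Lemma~\ref{lem_increase} gives $f(a,a,2a) \le f(a,a,2a-1) + 1$, and combined with Theorem~\ref{thm_aa2a} this yields $2a-1 = f(a,a,2a) \le f(a,a,2a-1) + 1$, hence $f(a,a,2a-1) \ge 2a-2$. This is still one short, so the monotonicity trick does not immediately close the gap either.

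The resolution I would pursue is to note that $(a,a,2a-1)$ satisfies $3a = a + (2a-1) + 1 > a + (2a-1)$, i.e.\ $3a > b+c$, placing it in the regime where the lower bound $\lfloor\frac{a+b+c}{2}\rfloor - 1 = 2a-2$ of Theorem~\ref{thm_nearopt} is the governing bound; to get $2a-1$ I would refine the counting argument in the proof of Theorem~\ref{thm_nearopt} for this specific triple. Concretely, with $a=b$ and $c=2a-1$, one checks that the inequality chain forcing $k \le 1$ becomes tight only under structural conditions (a specific configuration of single-occurrence colors and $(1,1,\star)$-type questions) that the parity and the equality $a=b$ make impossible; this should promote the bound from $k\le 1$ to $k\le 0$, giving $f(a,a,2a-1) \ge 2a-1$. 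I expect this refinement to be the main obstacle, as it requires re-examining each case of the Theorem~\ref{thm_nearopt} proof under the extra arithmetic constraints rather than applying a black-box result.

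For the upper bound $f(a,a,2a-1) \le 2a-1$, the work is essentially already done: Strategy~\ref{strat2} was defined for $(a,a,2a-i)$ with $i\in\{0,1\}$ and has exactly $2a-1$ questions. The feasibility proof for $(a,a,2a)$ in Theorem~\ref{thm_aa2a} proceeds by embedding into $(a,a,2a+1)$ and distinguishing secrets there; since every secret of $(a,a,2a-1)$-Mastermind is also a secret of $(a,a,2a)$-Mastermind, feasibility of Strategy~\ref{strat2} for the larger color set $2a$ immediately implies feasibility for $2a-1$. Alternatively, as the excerpt itself remarks, the $a+b+c \equiv 2,3 \pmod 4$ construction of Section~\ref{ssec_23} already produces a feasible $(2a-1)$-question strategy here. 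Either observation gives $f(a,a,2a-1) \le 2a-1$ with no new computation.

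Combining the refined lower bound with the upper bound yields $f(a,a,2a-1) = 2a-1$, completing the proof.
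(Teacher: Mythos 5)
Your upper bound is correct and matches the paper: Strategy~\ref{strat2} never uses color $2a$ on the third peg, so its feasibility for $(a,a,2a)$-Mastermind (established in Theorem~\ref{thm_aa2a}) carries over verbatim to $(a,a,2a-1)$-Mastermind. Your preliminary analysis of the lower bound is also accurate — Corollary~\ref{cor_c-1}, Theorem~\ref{thm_nearopt}, and the monotonicity of Lemma~\ref{lem_increase} each stall at $2a-2$.

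The genuine gap is that your lower bound is a plan, not a proof, and the plan points in a direction the paper does not take. You propose to re-run the counting argument of Theorem~\ref{thm_nearopt} and argue that ``parity and the equality $a=b$'' force $k\le 0$; you never identify the mechanism that actually closes the gap, and you concede this is the main obstacle. The paper's argument is more direct and hinges on one observation you do not make: if $|Q|\le 2a-2$, then since the third peg has $2a-1$ colors and at most one may be missing (Lemma~\ref{lem-forbidden}\ref{lem-forbiddena}), \emph{every} color appearing on the third peg appears exactly once — so every question is a $(\star,\star,1)$-question. From there a three-way case split on the first two pegs finishes the job: if some peg among the first two has no missing color, a counting argument forces two $(1,\star,1)$-questions, contradicting Lemma~\ref{lem-forbidden}\ref{lem-forbiddenb}; if both have a missing color and some color occurs once on one of them, Lemma~\ref{lem-forbidden}\ref{lem-forbiddend} is violated; and in the remaining case every question is a $(2,2,1)$-question, which is ruled out by Lemma~\ref{lem-forbidden}\ref{lem-forbiddenf} and \ref{lem-forbiddeng} depending on whether the two neighbors of a question coincide. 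Without the ``every question is a $(\star,\star,1)$-question'' step, or some equally concrete structural handle, your refinement of the $k\le 1$ chain has nothing to bite on, so the lower bound remains unproved as written.
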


\begin{proof}
Regarding the proof of the upper bound $ f(a,a,2a-1) \le 2a-1 $, we have shown in the proof of Theorem~\ref{thm_aa2a}
that Strategy~\ref{strat2} is feasible for $(a,a,2a)$-Mastermind. As this strategy does not contain the color
$2a$ on the third peg, it is also feasible for $(a,a,2a-1)$-Mastermind.

Regarding the proof of the lower bound $ f(a,a,2a-1) \ge 2a-1 $, 
for the sake of contradiction, assume that
$ f(a,a,2a-1) \le 2a-2 $ for some $a \in \NN $. 
By Lemma~\ref{lem-forbidden}\ref{lem-forbiddena}, on each peg,
at most one color is missing. For the third peg, this means
that indeed one color is missing and 
each color except the missing one occurs there exactly once.
We distinguish the following three cases. 
\begin{description}

\item[Case 1:] On at least one of the first two pegs, no 
color is missing. 

By symmetry, we can assume w.l.o.g.\ that no color is missing on the first peg.
By $ (a-2) \cdot 2  + 2 \cdot 1 = 2a-2$, then
at least two colors occur only once on the first peg,
meaning that there are at least two $(1,\star, 1)$ questions.
By Lemma~\ref{lem-forbidden}\ref{lem-forbiddenb} this is not possible.

\item[Case 2:] On both the first and second peg, one color is missing,
and on one of the first two pegs one color occurs only once.

W.l.o.g., on the first peg one color occurs only once,
meaning that there is at least one $(1,\star,1) $-question.
Since one color is missing on the first and the third peg,
by Lemma~\ref{lem-forbidden}\ref{lem-forbiddend} this is not possible.

\item[Case 3:] On both the first and the second peg, one color is missing,
and all other colors on the first and the second peg occur exactly twice.

All questions are $(2,2,1)$-questions. Consider a question $q$. 
Let $q'$ be its neighbor on the first peg and $q''$ be its neighbor 
in the second peg. If $q' = q''$, then $q$ and $q'=q''$ 
are double-neighboring. By Lemma~\ref{lem-forbidden}\ref{lem-forbiddenf},
this is not possible since one color is missing on each of the first two pegs.
However, by Lemma~\ref{lem-forbidden}\ref{lem-forbiddeng} $q' \neq q''$ is not possible either, since one color is missing on each of the first two pegs.
\qed
\end{description}
\end{proof}

In this section, we have shown that in the case of $(a,a,2a)$-Mastermind, the lower bound of Theorem~\ref{thm_nearopt} is reached, while in the case $(a,a,2a-1)$ (but also in the case $(a,a,a)$ given by~\cite{JD20}) the upper bound of Theorem~\ref{thm:3a_larger} is reached.

Note that for $(a,a,2a)$-Mastermind we have $3a=b+c$
and for $(a,a,2a-1)$-Mastermind (and $(a,a,a)$-Mastermind), we have $3a>b+c$. In the next section, we will generalize this observation and show that we can reach the bound of Theorem~\ref{thm_nearopt} in almost all the cases with $3a =b+c$.

\section{The case of $\mathbf{(a,b,c)}$-Mastermind with $3a = b+c$.} 
\label{sec_eq}

In this section, we prove the following theorem.

\begin{theorem}
\label{thm:3a=bc}
Let $a,b,c \in \NN$ with $a \leq b \leq c$, $3a = b+c$ and $(a,b,c) \neq (4,6,6)$. Then it holds that $f(a,b,c) = \lfloor \frac{a+b+c}{2} \rfloor -1$.
\end{theorem}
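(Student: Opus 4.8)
The plan is to match the general lower bound $f(a,b,c) \ge \lfloor \frac{a+b+c}{2}\rfloor - 1$ of Theorem~\ref{thm_nearopt} with an equal upper bound. Because $3a = b+c$, we have $a+b+c = 4a$, so the claimed value is exactly $2a-1$, and it suffices to produce, for every admissible triple other than $(4,6,6)$, a feasible strategy with $2a-1$ questions. I would first dispose of the extreme case $b = a$ (equivalently $c = 2a$): there the triple is $(a,a,2a)$ and Theorem~\ref{thm_aa2a} already gives $f(a,a,2a) = 2a-1$. Hence from now on I may assume $a < b \le c$, so that $a < b \le c < 2a$ and both $b-a$ and $c-a$ are positive.

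Next I would determine the shape of a candidate strategy with $2a-1$ questions by counting. If such a strategy uses only questions of types $A$, $B$, $C$, each question produces exactly one colour occurring only once on its single peg, so there are $2a-1$ such \emph{singles} in total. Comparing this with the number of singles forced on the three pegs---which, writing $m_1,m_2,m_3 \in \{0,1\}$ for the missing colours per peg (at most one each by Lemma~\ref{lem-forbidden}\ref{lem-forbiddena}), is at least $2a+3-2(m_1+m_2+m_3)$---shows that $m_1+m_2+m_3 \ge 2$; since three missing colours would violate pattern~$2$, feasibility needs exactly two missing colours. The cleanest realization keeps every colour multiplicity at most two and places the two missing colours on the second and third pegs (the first peg, having the fewest colours, stays complete). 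This leads to the type distribution of one question of type~$A$, $2(b-a)-1$ of type~$B$, and $2(c-a)-1$ of type~$C$ (both counts being positive odd integers): relative to the $\lfloor \frac{a+b+c}{2}\rfloor$-strategy of Section~\ref{sec_ge}, one deletes a type-$B$ and a type-$C$ question and inserts a single type-$A$ question, for a net saving of one question.

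It then remains to arrange these questions into blocks (paths and cycles of the strategy graph) so that none of the twelve forbidden patterns of Lemma~\ref{lem:super_lemma} arises, from which feasibility follows. Several patterns are automatic: the multiplicity-two bound prevents any three questions of one type from closing into a cycle and forbids double-neighboring pairs, ruling out patterns $4$, $10$ and $12$; and placing the two missing colours on single pegs rather than on a shared double peg disposes of patterns $1$, $2$, $3$, $5$, $9$ and $11$. The delicate part concerns the unique type-$A$ question, whose two neighbours are forced to be one type-$B$ question (sharing its third-peg colour) and one type-$C$ question (sharing its second-peg colour): one must route the $B$'s and $C$'s so that these two neighbours are not themselves flanked by further questions of their own type---which would create the window $BBACC$ forbidden by pattern~$6$---and so that no mixed two-term sequence is repeated---which is forbidden by pattern~$7$. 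Concretely I would keep most of the $B$'s and most of the $C$'s in long single-type cycles, as in Strategy~\ref{strat1a}, and splice a short mixed bridge through the type-$A$ question whose further neighbours are of the opposite type. (If an $A,B,C$-only arrangement proves too rigid in some parity class, an equally good route mirrors Strategy~\ref{strat2}, using one type-$D$ and one type-$E$ question together with a run of type-$C$ questions and a direct distinguishing argument as in the proof of Theorem~\ref{thm_aa2a}.)

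The main obstacle is exactly this arrangement step, and it is also what isolates the single exception. For $(4,6,6)$ the forced distribution is one $A$, three $B$'s and three $C$'s; every way of threading the type-$A$ question between its forced $B$- and $C$-neighbours, given that the remaining $B$'s and $C$'s must link up through the complete first peg, produces either the pattern-$6$ window or two equal mixed sequences of pattern~$7$, so Lemma~\ref{lem:super_lemma} cannot certify a seven-question strategy---consistent with the fact, recorded in Table~\ref{sum_tab1}, that $f(4,6,6) = 8$. For every other triple with $b > a$ the extra slack makes the splicing possible, and the proof concludes by exhibiting the arrangement, most naturally organized according to the parities of $b-a$ and $c-a$ as in Strategies~\ref{strat1a} and~\ref{strat1b}, and invoking Lemma~\ref{lem:super_lemma}.
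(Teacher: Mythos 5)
Your overall plan coincides with the paper's: the lower bound is Theorem~\ref{thm_nearopt}, the case $b=a$ is delegated to Theorem~\ref{thm_aa2a}, and for $b>a$ one builds a strategy with one type-$A$ question, $2(b-a)-1$ type-$B$ questions and $2(c-a)-1$ type-$C$ questions, certified by Lemma~\ref{lem:super_lemma}; this is exactly the paper's Strategy~\ref{strat_equal}. However, you stop at the point where the actual work begins. You acknowledge that the ``arrangement step'' is the main obstacle and then do not carry it out: no explicit block structure is given, so nothing is actually checked against the twelve patterns. The paper's arrangement (one cycle of $z-1$ type-$C$ questions, and a second cycle consisting of one $C$, all $y$ questions of type $B$, and the single $A$) is not determined by your sketch, and without it the feasibility verification is vacuous.

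There is also a concrete error: you assert that the multiplicity-two bound ``forbids double-neighboring pairs'' and use this to dismiss patterns 4, 10 and 12 (and implicitly 9). That is false --- a block of exactly two questions of one type is a double-neighboring pair, fully consistent with every colour occurring at most twice. This is precisely what goes wrong when $z = 2(c-a)-1 \le 3$, i.e.\ $c \le a+2$: the type-$C$ block degenerates. The paper therefore restricts Strategy~\ref{strat_equal} to $z\ge 5$ and treats the finitely many remaining triples separately, supplying ad hoc computer-verified strategies for $(2,3,3)$ and $(3,4,5)$ and excluding $(4,6,6)$. Your proposal neither detects that these small cases escape the generic construction nor provides strategies for them, so as written the argument fails for $(2,3,3)$ and $(3,4,5)$. (Your remarks on $(4,6,6)$ are harmless since that triple is excluded from the statement, but note that showing Lemma~\ref{lem:super_lemma} cannot certify a seven-question strategy would not by itself show that none exists; the paper resorts to exhaustive search for that fact.)
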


To establish this theorem, we define suitable feasible strategies. The case $(a,a,2a)$ has already been dealt with 
in Section~\ref{sec_aa2a}, so in this section we will focus on the case $b>a$ (and thus $c < 2a$). 
Note also that, in this case, it holds that $a+b+c = 4a$, i.e., $a+b+c$
is a multiple of $4$.

We define $x$, $y$ and $z$, such that the strategies to be constructed below 
contain $x$~questions of type~$A$, $y$~questions of type $B$, and $z$~questions of type~$C$, as follows:
\begin{eqnarray*}
x:= 1, \; \;y:= 2(b-a) -1, \; \;z:= 2(c-a) -1.
\end{eqnarray*}

Note that $x + y + z = 2(b + c) - 4a - 1$ and since $3a = b+c$, we have $x+y+z = 2a -1 = \lfloor \frac{a+b+c}{2}\rfloor -1$. Note also that $y$ and $z$ are odd numbers and, since we are in the case $a < b$, they are also positive. 

We now provide a strategy for the case $3a = b+c$ and $z \geq 5$. It consists of two blocks of questions, where the first block contains $ z-1 $ questions
and the second block contains $ y+2 $ questions.

\begin{strategy}
($\left( \lfloor \frac{a+b+c}{2}\rfloor - 1\right)$-strategy 
for $(a,b,c)$-Mastermind 
with $3a = b+c$ and $ z \ge 5$)
\label{strat_equal}

\begin{itemize}[wide=0pt]
\item[First peg:]
\mbox{}
\begin{itemize}
    \item First block: colors $1,\, 1, \, 2, \, 2, \dots, \, (z-1)/2, \, (z-1)/2$.
    \item Second block: colors $(z+1)/2,\, (z+1)/2,  \, (z+1)/2 +1, \, 
    (z+1)/2 + 1, \, \dots, \, a-1, \, a-1, \, a $.
\end{itemize}
\item[Second peg:]
\mbox{}
\begin{itemize}
    \item First block: colors $1,\, 2,\, 2,\,3,\,3,\,  \dots, \, (z-1)/2,\, (z-1)/2, \, 1 $.
    \item Second block: colors $(z+1)/2,\, (z+1)/2+1, \, (z+1)/2+2, \, 
        \dots, \, b-2, \, b-1, \, (z+1)/2 $.
\end{itemize}

\pagebreak[3]

\item[Third peg:]
\mbox{}
\begin{itemize}
    \item First block: colors $1,\, 2, \, 3, \, 
        \dots, \, z-2,  \, z-1 $.
    \item Second block: colors $z,\, z+1, \, z+1, \, \dots, \, c-2,\, c-2,\, c-1, \, c-1$.
\end{itemize}
\end{itemize}
\end{strategy}

\begin{remark}
\label{rem_equal}
Strategy~\ref{strat_equal} has the following characteristics:
\begin{description}
\item[(a)]
It contains two blocks, each forming a cycle of questions. The first block contains $z-1$ questions of type~$C$, and
the second block contains 
one question of type~$C$, $y$ questions of type~$B$, and one question of type~$A$.
\item[(b)] Since we are in the case $z \geq 5$, the first block contains at least $4$ questions. Furthermore, the second block contains 
at least $3$ questions. Therefore, no double-neighboring questions occur.
\item[(c)] A missing color occurs only on the second and third peg, but not on the first peg.
\end{description}
\end{remark}

Two examples for different triples ($a,b,c)$ are given in Table~\ref{tab3a=bc}(a),(b).

\begin{table}[t]
\resizebox{\linewidth}{!}{
\setbox9=\hbox{\begin{tabular}{@{}c@{}}
    \subcaptionbox[table]{$(a,b,c)=(2,3,3)$, $3$~questions.}{
            \centering
                \begin{tabular}{c || c | c | c || c }
                    Peg & $1$ & $2$ & $3$ & Type
                    \\
                    \hline
                    \hline
                    $q_1$ & $1$ & $1$ & $1$ & C 
                    \\
                    \hline
                    $q_2$ & $1$ & $2$ & $2$ & B 
                    \\
                    \hline
                    $q_3$ & $2$ & $1$ & $2$ & A 
                \end{tabular}}
                \\\\
                \begin{tabular}{c || c | c | c  || c}
                    Peg & $1$ & $2$ & $3$ & Type
                    \\
                    \hline
                    \hline
                    $q_1$ & $1$ & $1$ & $1$ & C
                    \\
                    \hline
                    $q_2$ & $1$ & $2$ & $2$ & C 
                    \\
                    \hline
                    $q_3$ & $2$ & $2$ & $3$ & C 
                    \\
                    \hline
                    $q_4$ & $2$ & $3$ & $4$ & B 
                    \\
                    \hline
                    $q_5$ & $3$ & $1$ & $4$ & A 
                \end{tabular}
\end{tabular}}
\setcounter{subtable}{0}
	\subcaptionbox[table]{$(a,b,c)=(4,5,7)$, yielding $x=1$, $y=1$, $z=5$ and thus $7$ questions.}{
            \centering
    		\raisebox{\dimexpr\ht9-\height}[\ht9][\dp9]{
                \begin{tabular}{c || c | c | c || c}
                    Peg & $1$ & $2$ & $3$ & Type
                    \\
                    \hline
                    \hline
                    $q_1$ & $1$ & $1$ & $1$ & $C$
                    \\
                    \hline
                    $q_2$ & $1$ & $2$ & $2$ & $C$
                    \\
                    \hline
                    $q_3$ & $2$ & $2$ & $3$ & $C$
                    \\
                    \hline
                    $q_4$ & $2$ & $1$ & $4$ & $C$
                    \\
                    \hline
                    \hline
                    $q_5$ & $3$ & $3$ & $5$ & $C$
                    \\
                    \hline
                    $q_6$ & $3$ & $4$ & $6$ & $B$
                    \\
                    \hline
                    $q_7$ & $4$ & $3$ & $6$ & $A$
                \end{tabular}}}
\quad
	\subcaptionbox[table]{$(a,b,c)=(6,9,9)$, yielding $x=1$, $y=5$, $z=5$ and thus $11$ questions.}{
            \centering
    		\raisebox{\dimexpr\ht9-\height}[\ht9][\dp9]{
                \begin{tabular}{c || c | c | c || c }
                    Peg & $1$ & $2$ & $3$ & Type
                    \\
                    \hline
                    \hline
                    $q_1$ & $1$ & $1$ & $1$ & C
                    \\
                    \hline
                    $q_2$ & $1$ & $2$ & $2$ & C
                    \\
                    \hline
                    $q_3$ & $2$ & $2$ & $3$ & C
                    \\
                    \hline
                    $q_4$ & $2$ & $1$ & $4$ & C
                    \\
                    \hline
                    \hline
                    $q_5$ & $3$ & $3$ & $5$ & C
                    \\
                    \hline
                    $q_6$ & $3$ & $4$ & $6$ & B
                    \\
                    \hline
                    $q_7$ & $4$ & $5$ & $6$ & B
                    \\
                    \hline
                    $q_8$ &  $4$ & $6$ & $7$ & B
                    \\
                    \hline
                    $q_9$ & $5$ & $7$ & $7$ & B
                    \\
                    \hline
                    $q_{10}$ & $5$ & $8$ & $8$ & B
                    \\
                    \hline
                    $q_{11}$ & $6$ & $3$ & $8$ & A
                    \end{tabular}}}
\quad
\subcaptionbox[table]{$(a,b,c)=(3,4,5)$, $5$~questions.}{
\centering
\raisebox{\dimexpr\ht9-\height}[\ht9][\dp9]{
\begin{tabular}{@{}c@{}}
    \subcaptionbox[table]{$(a,b,c)=(2,3,3)$, $3$~questions.}{
            \centering
                \begin{tabular}{c || c | c | c || c }
                    Peg & $1$ & $2$ & $3$ & Type
                    \\
                    \hline
                    \hline
                    $q_1$ & $1$ & $1$ & $1$ & C 
                    \\
                    \hline
                    $q_2$ & $1$ & $2$ & $2$ & B 
                    \\
                    \hline
                    $q_3$ & $2$ & $1$ & $2$ & A 
                \end{tabular}}
                \\\\
                \begin{tabular}{c || c | c | c  || c}
                    Peg & $1$ & $2$ & $3$ & Type
                    \\
                    \hline
                    \hline
                    $q_1$ & $1$ & $1$ & $1$ & C
                    \\
                    \hline
                    $q_2$ & $1$ & $2$ & $2$ & C 
                    \\
                    \hline
                    $q_3$ & $2$ & $2$ & $3$ & C 
                    \\
                    \hline
                    $q_4$ & $2$ & $3$ & $4$ & B 
                    \\
                    \hline
                    $q_5$ & $3$ & $1$ & $4$ & A 
                \end{tabular}
\end{tabular}}}}
    \caption{Four examples of strategies for $(a,b,c)$-Mastermind with $3a = b+c$.}
    \label{tab3a=bc}
\end{table}

\begin{lemma}
    \label{lem:3a=bc}
    Strategy~\ref{strat_equal} is feasible for $(a,b,c)$-Mastermind where $a \le b \le c$, $3a = b+c$, $b>a$ and $z \geq 5$.
\end{lemma}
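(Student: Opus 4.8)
The plan is to verify feasibility by invoking Lemma~\ref{lem:super_lemma}: since by Remark~\ref{rem_equal}(a) the strategy consists only of questions of types $A$, $B$ and $C$, it suffices to check that none of the twelve forbidden patterns occurs. I would organize the check into three groups according to which structural feature of Strategy~\ref{strat_equal} rules each pattern out, mirroring the bookkeeping used in the proof of Lemma~\ref{lem:2mod4}.

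First I would dispose of the patterns that rely on missing colors. By Remark~\ref{rem_equal}(c) at most one color is missing on each peg, and the first peg has no missing color at all, so patterns~1 and~2 are immediately excluded. For pattern~3 I would observe that the first peg is a double peg of both type~$B$ and type~$C$, so neither $B$ nor $C$ can have missing colors on both of its double pegs; and since there is only a single question of type~$A$, there is no $AAA$ sequence to worry about.

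Next I would use the absence of double-neighboring questions guaranteed by Remark~\ref{rem_equal}(b) (which holds because $z \ge 5$ forces the first block to have at least $4$ questions and the second block at least $3$). This single fact rules out every pattern that requires a double-neighboring pair, namely patterns~4, 8, 9, 10, 11 and~12.

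The remaining and most delicate part is patterns~5, 6 and~7, which concern ordinary (single-edge) sequences of mixed or repeated types, and here I would argue directly from the type layout of the two cyclic blocks described in Remark~\ref{rem_equal}(a). The first block is a cycle consisting entirely of type~$C$, so it contains no sequence mixing two types. The second block is a cycle whose type sequence reads $C, B, \dots, B, A$ with exactly one $C$ and exactly one $A$; consequently the only type adjacent to itself is $B$, so no sequence of the form $T_1T_1T_2T_2$ or $T_1T_1T_2T_3T_3$ with distinct doubled types can arise (excluding patterns~5 and~6), and each mixed adjacency $CB$, $BA$ and $AC$ occurs exactly once, so there are never two different sequences of the same mixed type (excluding pattern~7). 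The only subtlety I anticipate is to treat the cyclic wrap-around adjacency between the lone $A$ and the lone $C$ at the join of the second block correctly, making sure it contributes only a single $AC$ sequence and does not create a second copy of any mixed pair. Once all twelve patterns are excluded, Lemma~\ref{lem:super_lemma} yields feasibility.
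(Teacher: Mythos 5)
Your proposal is correct and follows essentially the same route as the paper's proof: both invoke Lemma~\ref{lem:super_lemma} and exclude the twelve patterns using Remark~\ref{rem_equal}(a)--(c), with patterns 4, 8--12 ruled out by the absence of double-neighboring questions, patterns 1--3 by the location of the missing colors, and patterns 5--7 by the fact that the first block is all type~$C$ while the second block contains only one question each of types $A$ and $C$. Your treatment of pattern~3 via the first peg being a double peg of both $B$ and $C$, and your explicit handling of the cyclic wrap-around adjacency, are slightly more detailed than the paper's but amount to the same argument.
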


\begin{proof}
We will use Lemma~\ref{lem:super_lemma} to prove that Strategy~\ref{strat_equal} is feasible.
By Remark~\ref{rem_equal}(a), the condition of having only questions of type $A$, $B$ or $C$ is fulfilled.

By Remark~\ref{rem_equal}(b), the patterns 4, 8, 9, 10, 11 and 12 do not occur, since there are no double-neighboring questions.

By Remark~\ref{rem_equal}(c), the patterns 1 and 2 do not occur. Moreover, since there is only one question of type~$A$ and the missing colors are on the second and third pegs (which are the double pegs of type $A$), the pattern 3 does not occur either.

Lastly, since the first block contains only questions of type~$C$ and the second block contains only one question each of types $A$ and $C$, the patterns 5, 6 and 7 cannot occur either.

Therefore, none of the forbidden patterns of Lemma~\ref{lem:super_lemma} occurs, which proves that the strategy is feasible.
\end{proof}

Using Lemma~\ref{lem:3a=bc}, Theorem~\ref{thm_aa2a} and the computer search on the triples for the case 
$z \le 3$ we can now prove Theorem~\ref{thm:3a=bc}.

\begin{proof}[Proof of Theorem~\ref{thm:3a=bc}.]
    Let $a,b,c \in \NN$ with $a \leq b \leq c$ and $3a = b+c$. 
    We consider three cases.

    \pagebreak[3]
    
    \begin{itemize}
    \item $b=a$:
    
     Then we have $c=2a$ and according to Theorem~\ref{thm_aa2a}, 
     $f(a,b,c) = 2a -1 = \lfloor \frac{a+b+c}{2} \rfloor -1$. 
     
    \item $b>a$ and $c \ge a+3$:
     
    Then $z = 2(c-a) - 1 \geq 5$ holds and according to Lemma~\ref{lem:3a=bc}, Strategy~\ref{strat_equal} 
    is feasible and consists of $\lfloor \frac{a+b+c}{2} \rfloor -1$ questions. 
    
    \item $b>a$ and $c \le a+2$:
    
    Since $z = 2(c-a) - 1 \le 3$, Strategy~\ref{strat_equal}
    is not defined for this case. However, this occurs only for finitely many triples $(a,b,c)$. Indeed, since $b \leq c$, 
    we have $3a = b+c \leq 2(a+2)$ and $a \leq 4$. The only triples with $a \leq 4$ and $3a = b+c$ 
    are $(1,1,2)$, $(2,2,4)$, $(2,3,3)$, $(3,3,6)$, $(3,4,5)$, $(4,4,8)$, $(4,5,7)$ and $(4,6,6)$. 
    The cases $(1,1,2)$, $(2,2,4)$, $(3,3,6)$ and $(4,4,8)$ are covered by the case $(a,a,2a)$ and the case $(4,5,7)$ is covered by the case $c \geq a+3$.
    The only remaining cases are the triples $(2,3,3)$, $(3,4,5)$ and $(4,6,6)$. Tables~\ref{tab3a=bc}(c) and (d) give strategies for the cases $(2,3,3)$ and $(3,4,5)$. These strategies use $3 = \frac{2+3+3}{2} -1$ and $5 = \frac{3+4+5}{2} -1$ questions, respectively.
    The computer program that can be found at~\cite{SC22} verifies that these strategies are feasible. For the case $(4,6,6)$, the same program shows by exhaustive search that no feasible strategy with $7$~questions exists.\qed
    \end{itemize}
\end{proof}

\section{The case of $\mathbf{(a,b,c)}$-Mastermind with $ \mathbf{3a < b+c} $}
\label{sec_le}

We divide this case into two subcases.

\subsection{The case of $(a,b,c)$-Mastermind with $ 3a < b+c$ and $2b \le c$}
\label{ssec_2b<=c}

The following theorem shows that if $c \ge 2b$,
the trivial lower bound $c-1$ is attained.

\begin{theorem}
Let $a,b,c \in \NN $  with $a \le b \le c$, $3a < b+c$ and $2b \le c$. Then $f(a,b,c) = c-1$.
\end{theorem}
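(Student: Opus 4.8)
The plan is to prove the two inequalities $f(a,b,c) \ge c-1$ and $f(a,b,c) \le c-1$ separately. The lower bound is immediate: by Corollary~\ref{cor_c-1}, every feasible strategy for $(a,b,c)$-Mastermind has at least $c-1$ questions, so $f(a,b,c) \ge c-1$. The entire content of the theorem therefore lies in the upper bound, i.e.\ in exhibiting a feasible strategy that uses only $c-1$ questions. The key structural observation guiding the construction is that when $2b \le c$, the third peg has so many colors that we can afford to use essentially each of the $c-1$ questions to pin down a new color on the third peg, while the much smaller first and second color sets ($a,b \le c/2$) can be handled ``for free'' by recycling colors across these $c-1$ questions.

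The strategy I would construct uses exactly $c-1$ questions in which the third peg runs through the colors $1,2,\dots,c-1$ (each exactly once, leaving color $c$ as the single missing color permitted by Lemma~\ref{lem-forbidden}\ref{lem-forbiddena}), and in which every color of the first and second pegs occurs at least twice. This is feasible in terms of counting: since $2b \le c$ and $a \le b$, we have $2a + 2b \le 2c \le 2(c-1) + 2$, so the $c-1$ questions provide enough slots to place each of the $a$ first-peg colors and each of the $b$ second-peg colors at least twice (with a little care at the boundary, exactly as in Strategy~\ref{strat2} extended by more type-$C$ questions). Indeed, Table~\ref{tabaa2a}(b) already illustrates such a strategy for $(3,5,11)$, and I would generalize that pattern: a path of questions that is of type $D$ at one end, type $E$ at the other, and type $C$ in between, with the third-peg colors strictly increasing. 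The point is that each question is distinguished from the others by its unique third-peg color, while the doubled first- and second-peg colors guarantee that secrets differing only on the first or second peg are still separated.

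To prove feasibility I would appeal to the general machinery already established. If the generalized strategy consists only of questions of types $A$, $B$, $C$ (or can be recast that way), I would invoke Lemma~\ref{lem:super_lemma} and verify that none of the twelve forbidden patterns occurs; the crucial facts are that there is a single missing color (on the third peg only), that no two questions are double-neighboring, and that the questions of a given type form a single path, so patterns 1--12 are ruled out exactly as in the proofs of Lemmas~\ref{lem:0mod4}, \ref{lem:2mod4} and~\ref{lem:3a=bc}. If instead the cleanest strategy uses the types $C$, $D$, $E$ (as in Strategy~\ref{strat2}), I would argue feasibility directly in the style of Theorem~\ref{thm_aa2a}: embed the strategy into $(a,b,c+1)$-Mastermind by adding one auxiliary question, show the enlarged strategy is feasible, and then conclude that any two secrets not already separated would have to be separated only by the auxiliary question, deriving a contradiction from the doubled colors on the first two pegs.

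The main obstacle I anticipate is the boundary bookkeeping in the construction: with only $c-1$ questions I must distribute $a$ first-peg colors and $b$ second-peg colors each at least twice while keeping all $c-1$ third-peg colors distinct, and I must do so in a way that keeps the strategy graph a single path (or a small number of clean paths) so that Lemma~\ref{lem:super_lemma} or the embedding argument applies without creating a forbidden double-neighboring pair or a short bad subsequence. The inequality $2b \le c$ is exactly what makes this distribution possible, so I expect the proof to hinge on a careful but routine verification that the slot count works out, followed by the feasibility check. Once the explicit strategy is written down and its missing-color and neighboring structure are described (as in the Remarks accompanying the earlier strategies), the feasibility lemma does the heavy lifting and the upper bound $f(a,b,c) \le c-1$ follows, completing the proof together with the lower bound from Corollary~\ref{cor_c-1}.
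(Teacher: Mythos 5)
Your lower bound is exactly the paper's (Corollary~\ref{cor_c-1}), and the strategy you sketch --- third peg running through $1,\dots,c-1$ once each, first and second peg colors recycled, a single path that is type $D$ at one end and type $E$ at the other --- is essentially the paper's explicit Strategy~\ref{strat3}. The gap is in your feasibility argument. When $c$ is large compared to $a$ and $b$ (which is the typical situation here, e.g.\ $(3,5,11)$), the $c-1$ questions force some color on the first peg to occur $c-2a+1\ge 3$ times and some color on the second peg to occur $c-2b+2\ge 2$ times; in the paper's Strategy~\ref{strat3} the color $a$ on the first peg and the color $b$ on the second peg each occur many times. Consequently the strategy is \emph{not} composed only of $(f_1,f_2,f_3)$-questions with all $f_i\le 2$, so it cannot be ``recast'' as a strategy of types $A$, $B$, $C$ and Lemma~\ref{lem:super_lemma} simply does not apply. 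Your fallback, the embedding trick from Theorem~\ref{thm_aa2a}, also fails: that trick works for $(a,a,2a)$ precisely because adding one auxiliary question brings every first- and second-peg multiplicity up to exactly $2$ (so all questions become type $C$), which requires $c=2a=2b$; adding one question to a $(c-1)$-question strategy with $2a,2b$ well below $c$ cannot fix multiplicities of $3$ or more. So neither of the two mechanisms you invoke actually certifies feasibility of the strategy you construct.

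The paper avoids this by never verifying the $(c-1)$-question strategy directly. It starts from Strategy~\ref{strat2} for $(b,b,2b)$-Mastermind, whose feasibility was already proved by a bespoke case analysis in Theorem~\ref{thm_aa2a}; it then applies Remark~\ref{rem_increase} a total of $c-2b$ times to pass to a feasible $(b,b,c)$-strategy with $c-1$ questions (each step duplicates the last question with a new third-peg color and is justified by the proof of Lemma~\ref{lem_increase}, which makes no assumption about question types); and finally it applies Lemma~\ref{lem_proj} a total of $b-a$ times to project the first peg from $b$ colors down to $a$, checking at each step that the hypothesis of Lemma~\ref{lem_proj} holds via the three questions $(x,b-1,2b-3)$, $(x,b,2b-2)$, $(x,b,2b-1)$. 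To repair your proof you would either have to follow this reduction route, or write out a self-contained distinguishing argument for Strategy~\ref{strat3} that does not rely on Lemma~\ref{lem:super_lemma}; as written, the ``feasibility lemma does the heavy lifting'' step is the missing piece.
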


\begin{proof}
By Corollary~\ref{cor_c-1}, the lower bound follows.

To prove the upper bound, consider first Strategy~\ref{strat2} given for $(b,b,2b)$-Mastermind with $2b-1$ questions. 

We first turn this strategy into one for $(b,b,c)$-Mastermind 
using $c-1$ questions. We achieve this by applying 
Remark~\ref{rem_increase} $c-2b$ times. We iteratively copy the last question and modify the 
color of the third peg to be the new color. 
This way, we indeed obtain a strategy for $(b,b,c)$-Mastermind using $c-1$ colors.

We then modify this strategy to a strategy for $(a,b,c)$-Mastermind using the same number of questions.
We do this by applying Lemma~\ref{lem_proj} $b-a$ times. On the first peg, we iteratively project
the largest  color to its immediate predecessor. The first projection is $Q_1(b-1,b)$ and we 
apply Lemma~\ref{lem_proj} because the questions $(b-1,b-1,2b-3)$, $(b-1,b,2b-2)$, and $(b,b,2b-1)$ 
are in the strategy and for all colors $y\in [b] $ and $z\in[c] $,
one of these questions differs from $y$ on the second peg 
and differs from $z$ on the third peg. 
Later on, when applying the projection $Q_1(x-1,x)$ for some value $x$ with $ a+1 \le x \le b-1$,  Lemma~\ref{lem_proj} will still be applicable,
since the questions  $(x,b-1,2b-3)$, $(x,b,2b-2)$ and $(x,b,2b-1)$ will be in the strategy.

Thus, we have obtained a feasible strategy for $(a,b,c)$-Mastermind using $c-1$ questions.
\end{proof}

Below follows an explicit definition of the strategy constructed in the preceding proof.

\begin{strategy}
({$(c-1)$}-strategy for $(a,b,c)$-Mastermind with $3a < b+c$, $ 2b \le c$)
\label{strat3}
\begin{enumerate}[topsep=0pt]
\item First peg: colors
$1, \, 1,\, 2, \, 2, \, \dots, \, a-1, \, a-1, \, a, \, a, \, \dots, a $. 
\item Second peg: colors
$1, \, 2, \, 2, \, \dots, \, b-1, \, b-1, \, b, \, b, \, \dots, b $. 
\item Third peg: colors
$1, \, 2, \, \dots, \, c-2, \, c-1 $. 
\end{enumerate}
\end{strategy}

One example is given in Table~\ref{tabaa2a}(b). 

\subsection{The case of $(a,b,c)$-Mastermind with $ 3a < b+c $ and $ 2b > c$}

To obtain a strategy for the case where $2b>c$, let us first consider the case in which $b + c \equiv 2 \pmod{3}$ and $a = (b+c-2)/3$. Note that for fixed 
values of $b$ and $c$ such that $b+c \equiv 2 \pmod{3}$, $(b+c-2)/3$ is 
the largest integer value for $a$ that still fits in the case  $3a < b + c$.

For the sake of the strategy, we define 
\begin{eqnarray*}
x:= \frac{2}{3} (2b-c-1), \; \; y:= \frac{2}{3} (2c-b-1).
\end{eqnarray*}
Note that $3b - (b+c+1) = 2b-c-1$ and $3c - (b+c+1) = 2c-b-1$ are multiples of $3$, 
since $b+c \equiv 2 \pmod 3$. Therefore, $x$ and $y$ are even integers. 
Moreover, $b -1 = x +y/2$ , $c -1 = y + x/2$ and 
$x+y = \lfloor \frac{2}{3} (b+c-1) \rfloor$ hold.

Now we present our strategy for this case.


\begin{strategy}
({$\lfloor \frac{2}{3} (b+c-1) \rfloor$}-strategy for $(a,b,c)$-Mastermind 
with $ b+c \equiv 2 \pmod{3}$ and $a = \frac{b+c-2}{3}$)
\label{stratbc2}
\begin{enumerate}
\item First peg: 
colors $1, \, 1,\, 2, \, 2, \, \dots, \, a-1, \, a-1, \, a, \, a $ 

(except for $(a,b,c) =(2,4,4)$: colors  $1, \, 1, \, 1, \, 2$).

\pagebreak[3]

\item Second peg: 
\begin{itemize}
    \item First block: $1,\, 2, \, \dots, \, x$.
    \item Second block: $x+1,\, x+2, \, x+2, \, 
        \dots, \, b-1, \, b-1, \, x+1 $.
\end{itemize}
\item Third peg: 
\begin{itemize}
    \item First block: colors $1,\, 2, \, 2, \, 
        \dots, \, x/2, \, x/2, \, 1 $. 
    \item Second block: colors $x/2+1,\, x/2+2, \, \dots, \, c-1$.
\end{itemize}
\end{enumerate}
\end{strategy}

\pagebreak[3]

\begin{remark}
\label{rembc2}
Strategy~\ref{stratbc2} has the following characteristics:
\begin{description}

\item[(a)] For $(a,b,c) \not= (2,4,4) $, it 
contains two (possibly empty) blocks, each forming a cycle of questions:
one cycle consisting of $x$ questions of type~$B$,
and one consisting of $y$ questions of type~$C$.

Note that it is possible that $x = 0$,  in which case the first block 
is empty, or that  $x = 2$, in which case the first block consists 
of a pair of double-neighboring questions. 
\item[(b)] Only one pair of double-neighboring questions may occur in the first block (as shown later).
\item[(c)] There are only two missing colors, one on the second peg and one on the third peg.
\end{description}
\end{remark}

Three examples are given in Table~\ref{tabbcmod2}(a),(b),(c), 
where the third one is the exceptional case $ (a,b,c)=(2,4,4)$.

\begin{lemma}
    Strategy~\ref{stratbc2} is feasible for 
    $2b > c$, $b+c \equiv 2 \pmod{3}$, $a = (b+c-2)/3$ and $c \geq 5$.
\end{lemma}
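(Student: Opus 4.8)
The plan is to establish feasibility by invoking Lemma~\ref{lem:super_lemma}, exactly as in the proofs of Lemmas~\ref{lem:0mod4}, \ref{lem:2mod4} and~\ref{lem:3a=bc}. The first step is to record the structural facts supplied by Remark~\ref{rembc2}: by part~(a), for $(a,b,c)\neq(2,4,4)$ (which is excluded here since $c\geq 5$) the strategy consists solely of questions of types $B$ and $C$, so the hypothesis of Lemma~\ref{lem:super_lemma} is met; by part~(c) the only missing colors are one on the second peg and one on the third peg, while the first peg has none. I would also make explicit the disjointness underlying the two-cycle structure: the $x$ questions of type~$B$ occupy first-peg colors $1,\dots,x/2$, second-peg colors $1,\dots,x$ and third-peg colors $1,\dots,x/2$, whereas the $y$ questions of type~$C$ occupy the complementary ranges $x/2+1,\dots,a$, $x+1,\dots,b-1$ and $x/2+1,\dots,c-1$ on the respective pegs. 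Hence no $B$-question agrees with any $C$-question on even a single peg, so the two blocks are genuinely separate connected components and the strategy contains no sequence of mixed type $T_1T_2$ with $T_1\neq T_2$.

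The main preliminary obstacle is to justify the claim deferred in Remark~\ref{rembc2}(b), namely that a double-neighboring pair can occur only in the first block and at most once. For the $C$-block I would argue that the third-peg colors of the $C$-questions are pairwise distinct, so any two of them already differ there and cannot be double-neighboring; moreover $c\geq 5$ forces $y=\tfrac{2}{3}(2c-b-1)\geq 4$, since $y=2$ would force $b=2c-4$, contradicting $b\leq c$ when $c\geq 5$. Thus the $C$-block is a cycle of length at least four with no double-neighboring pair. For the $B$-block the second-peg colors are pairwise distinct, so a double-neighboring pair is possible only when the block collapses to two questions, i.e.\ when $x=2$; for $x\geq 4$ it is a cycle with no such pair, and for $x=0$ it is empty. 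Therefore there is at most one double-neighboring pair, sitting alone in a two-question $B$-block.

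With these facts in hand, ruling out the twelve patterns is routine. Patterns~1 and~2 fail because no peg carries two missing colors and the first peg carries none. Pattern~3 (and likewise pattern~9) fails because for each of types $B$ and $C$ exactly one of the two double pegs carries a missing color (the third peg for $B$, the second peg for $C$) and never both, since the first peg is a double peg of each type and has no missing color. Patterns~5, 6, 7 and~8 each require either three distinct types or a mixed sequence $T_1T_2$, both excluded by the type list and the block disjointness above. Finally, patterns~4, 10, 11 and~12 all demand either two double-neighboring pairs or a double-neighboring pair inside a block that also carries a $TTT$ sequence, which the double-neighboring analysis rules out, as the unique possible pair stands alone in a size-two $B$-block. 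Having excluded all twelve patterns, Lemma~\ref{lem:super_lemma} gives feasibility. I expect the only genuinely delicate point to be the double-neighboring count together with the $y\geq 4$ estimate; the remaining pattern checks are bookkeeping against Remark~\ref{rembc2}.
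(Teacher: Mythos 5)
Your proposal follows the paper's proof essentially verbatim: invoke Lemma~\ref{lem:super_lemma}, read off patterns 1, 2, 3, 5--9 from Remark~\ref{rembc2} and the block separation, and reduce the double-neighboring patterns 4, 10--12 to the bound $y\ge 4$; your derivation of $y\ge 4$ (from $y=2\Rightarrow b=2c-4>c$) is a slightly slicker route to the same estimate than the paper's case split on $c\in\{5,6\}$ versus $c\ge 7$. One clause, however, is logically backwards: you write that the $C$-questions ``already differ'' on the third peg ``and cannot be double-neighboring,'' but double-neighboring means agreeing on exactly two pegs, so a double-neighboring $CC$ pair \emph{must} differ on the third peg (the single peg of $C$) while agreeing on the first and second; distinctness of the third-peg colors therefore rules out nothing. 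The correct justification --- which you do state for the $B$-block and which the paper uses implicitly --- is that in these cyclic layouts two questions of the block agree on both double pegs only when the block has exactly two questions, and $y\ge 4$ excludes this for the $C$-block. With that clause repaired the argument is complete.
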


\begin{proof}
    We will use Lemma~\ref{lem:super_lemma} to prove that Strategy~\ref{stratbc2} 
    is feasible.
    By Remark~\ref{rembc2}(a), the condition of having only questions of type $A$, $B$ or $C$ is fulfilled.

    By Remark~\ref{rembc2}(c),
    the patterns 1 and 2 do not occur.

    All questions in the first block are of type~$B$, neighboring to 
    other questions of type~$B$, and the questions in the second block are of type~$C$, neighboring to other 
    questions of type~$C$. Thus, the patterns 5, 6, 7 and 8, where questions of different types are 
    neighboring to each other, do not occur.

    \begin{claim}
      $ y \ge 4$.
    \end{claim}

    \begin{proof}[Proof (Claim).]
    We have $ 2b > c$, $ b \le c$ and $ b+c \equiv 0 \pmod{2} $. 

    \begin{itemize}
    \item For $c = 5$, $c=6$, only $b=3$ and $b=5$, respectively, are possible.
    Then $  y = \frac{2}{3} (2c-b-1) = 4 $ follows.

    \item For $c \ge 7$ we have
    \begin{eqnarray*}
    y \; = \;  \frac{2}{3} (2c-b-1) \; \ge \; 
    \frac{2}{3} (2c-c-1) \; = \; 
    \frac{2}{3} (c-1) \; \ge \; 
    \frac{2}{3} \cdot 6 \; = \; 4. 
    \end{eqnarray*}
    \end{itemize}
    \let\qed\relax
    \end{proof}
    
    By the claim, the second block does not consist of a pair of 
    double-neighboring questions, and if $x = 2$ holds, 
    the first block forms the only double-neighboring question. So the 
    patterns 10, 11 and 12, containing at least two pairs of 
    double-neighboring questions, do not occur. Moreover, if $x = 2$ holds, there are only two questions of type~$B$, namely the ones in the double-neighboring pair, so pattern 4 does not occur.
    
    Since there is no missing color on the first peg, the common double peg of types $B$ and $C$, pattern 3 and 9 do not occur. 

    Therefore, none of the forbidden patterns of Lemma~\ref{lem:super_lemma} occurs, and the strategy is feasible.
\end{proof}

\begin{remark}
\label{rem:extra_cases_bc2mod3}
If $c \leq 4$, the only possible cases are $(a,b,c) = (1,2,3)$,
and $(a,b,c) = (2,4,4)$.
The corresponding strategies can be found in Table~\ref{tabbcmod2}(b),(c).
Our computer program shows that both strategies are feasible~\cite{SC22}.

\end{remark}

\begin{theorem}
    Let $a,b,c \in \NN $ with $a\leq b \leq c$ and $3a < b+c$ and $2b > c$. Then it holds that $f(a,b,c) = \lfloor \frac{2}{3} (b+c-1) \rfloor$.
\end{theorem}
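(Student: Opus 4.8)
The lower bound $f(a,b,c)\ge\lfloor\frac{2}{3}(b+c-1)\rfloor$ is exactly the content of Lemma~\ref{lem:lower_bound_2b_greater_c}, so the whole work lies in the matching upper bound. The plan is to reduce every admissible triple to the \emph{extremal} instances already handled, namely those with $b+c\equiv 2\pmod 3$ and $a=(b+c-2)/3$, for which Strategy~\ref{stratbc2} (together with Remark~\ref{rem:extra_cases_bc2mod3} for $c\le 4$) provides a feasible strategy of size $\lfloor\frac{2}{3}(b+c-1)\rfloor$. Two reduction mechanisms are available: Corollary~\ref{cor_proj}, which lets us lower the number of colors on a peg without increasing the number of questions (whenever the hypotheses of Lemma~\ref{lem_proj} hold), and Lemma~\ref{lem_increase}/Remark~\ref{rem_increase}, which raises a color count at the cost of at most one extra question. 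Since $f$ is symmetric in its three arguments, I may freely reorder the triple after a reduction.

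I would split the upper bound along the residue of $s:=b+c$ modulo $3$, writing $m$ for the relevant quotient in each case. For $s\equiv 2\pmod 3$ the extremal value of $a$ is $\hat a=(b+c-2)/3$, and Strategy~\ref{stratbc2} already gives $f(\hat a,b,c)\le\lfloor\frac{2}{3}(b+c-1)\rfloor$; for any smaller admissible $a$ I would apply the first-peg projection of Corollary~\ref{cor_proj} repeatedly, projecting the largest color of the first peg onto its predecessor, to obtain $f(a,b,c)\le f(\hat a,b,c)$. For $s\equiv 1\pmod 3$ (so $s=3m+1$, target $2m$, and the extremal $a^{\ast}=m$) I would pass to the instance $(m,b,c+1)$, which satisfies $b+(c+1)\equiv 2\pmod 3$, has the same extremal value $a^{\ast}=m$ and the same target $2m$, and satisfies $2b>c+1$ because $s\equiv 1\pmod 3$ rules out $2b=c+1$; projecting its third peg from $c+1$ down to $c$ (Corollary~\ref{cor_proj}) yields $f(m,b,c)\le 2m$, after which I descend in $a$ as before. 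For $s\equiv 0\pmod 3$ (so $s=3m$, target $2m-1$, and the extremal $a^{\ast}=m-1$) I would instead start from $(m-1,b,c-1)$, which has $b+(c-1)\equiv 2\pmod 3$, extremal value $m-1$ and target $2m-2$; Lemma~\ref{lem_increase} then gives $f(m-1,b,c)\le f(m-1,b,c-1)+1=2m-1$, and again I descend in $a$. In all three cases the resulting bound coincides with $\lfloor\frac{2}{3}(b+c-1)\rfloor$, matching the lower bound.

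The main obstacle is the repeated verification that the hypotheses of Lemma~\ref{lem_proj} actually hold at each projection step — both for the first-peg descent in $a$ and for the single third-peg step used when $s\equiv 1$. Concretely, when projecting color $j$ onto $i$ one must exhibit, for every pair $(y,z)$, a question whose first entry lies in $\{i,j\}$ while its other two entries avoid $y$ and $z$ respectively. Here I would exploit that in Strategy~\ref{stratbc2} each color of the first peg occurs exactly twice and the questions form cycles, so the (at most four) questions carrying colors $i,j$ on the relevant peg spread their remaining two coordinates over distinct values; a short counting argument then shows that no single pair $(y,z)$ can block all of them, provided $b$ and $c$ are not too small. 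The finitely many genuinely small instances — those with $c\le 4$, together with the few triples where a reduced instance would leave the range $2b'>c'$ or violate $b'\le c'$ (for example when $b=c$, forcing a reordering as in the $s\equiv 0$ case) — I would dispose of directly or by the exhaustive-search program of~\cite{SC22}.

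Finally, I would assemble the pieces: the lower bound from Lemma~\ref{lem:lower_bound_2b_greater_c} and the case-by-case upper bound above together give $f(a,b,c)=\lfloor\frac{2}{3}(b+c-1)\rfloor$ for all $a\le b\le c$ with $3a<b+c$ and $2b>c$.
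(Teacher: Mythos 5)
Your proposal follows essentially the same route as the paper: the lower bound is quoted from Lemma~\ref{lem:lower_bound_2b_greater_c}, and the upper bound is obtained by reducing to the extremal instances $b+c\equiv 2\pmod 3$, $a=(b+c-2)/3$ covered by Strategy~\ref{stratbc2}, adjusting by $\pm 1$ on one peg according to the residue of $b+c$, descending in $a$ via the projection of Lemma~\ref{lem_proj}, and delegating the small exceptional triples to the computer search. One technical caveat in your $s\equiv 1\pmod 3$ branch: you pass to $(m,b,c+1)$ and then invoke Corollary~\ref{cor_proj} to project the third peg from $c+1$ to $c$, but the hypotheses of Lemma~\ref{lem_proj} actually \emph{fail} there --- color $c+1$ does not occur at all in Strategy~\ref{stratbc2} (it is the missing color of the third peg), and the remaining color $i$ occurs at most twice, so the adversarial pair $(x,y)=(q_1,q_2')$ built from those two questions blocks the condition. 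The step you want is nevertheless trivially true for a more elementary reason: since color $c+1$ never appears in the strategy, the strategy is verbatim a feasible strategy for $(m,b,c)$-Mastermind, no projection needed. This is exactly how the paper argues (it increments $b$ rather than $c$ and observes that the missing color on the enlarged peg makes the strategy directly valid for the original instance); with that one substitution your argument goes through.
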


\begin{proof}
First, using Lemma~\ref{lem:lower_bound_2b_greater_c}, we know that $f(a,b,c) \geq \lfloor \frac{2}{3} (b+c-1) \rfloor$.
It remains now to show that there always exists a feasible strategy matching this lower bound. 
We will use Strategy~\ref{stratbc2}.
To do so, we modify $a$, $b$ and $c$ so that the conditions of this 
strategy are fulfilled. 

\begin{itemize}
\item $b+c \equiv 2 \pmod{3}$. 

Let $b' = b$ and $c' = c$. 

\item $b+c \equiv 0 \pmod{3}$. 

Let $b' = b$ and $c' = c-1$. 

(If $b=c$, interchange the roles of the second and the third peg
so that still $ b' \le c'$ holds.)

\item $b+c \equiv 1 \pmod{3}$. 

Let $b' = b+1$ and $c' = c$. 

(If $b=c$, interchange the roles of the second and the third peg
so that still $ b' \le c'$ holds.)

\end{itemize}

Lastly, let $a' = \frac{b'+c'-2}{3}$. 
We now have $b'+ c' \equiv 2\pmod{3}$ and 
since $2b > c$, $b' \geq b$ and $c' \leq c$, 
we also have $2b' > c'$. 
We also have $3a' = b'+c'-2 < b'+c'$. 

We can now apply Strategy~\ref{stratbc2} 
to $(a',b',c')$-Mastermind and we obtain 
a strategy using $\lfloor \frac{2}{3}(b'+c' - 1) \rfloor$ questions.
We will then modify this strategy to obtain a strategy 
for $(a',b,c)$-Mastermind with $\lfloor \frac{2}{3}(b+c - 1) \rfloor$ questions.
\begin{itemize}
\item $b+c \equiv 2 \pmod{3}$. 

$b' = b$ and $c' = c$ hold. Therefore, we already have a strategy for 
$(a',b',c')$-Mastermind. 

\item $b+c \equiv 1 \pmod{3}$. 

Note that $\lfloor \frac{2}{3}(b+c - 1) \rfloor = \lfloor \frac{2}{3}(b+c) \rfloor$. 
Since $b' = b+1$ and $c'=c$, 
the strategy for $(a',b',c')$-Mastermind has the right amount of questions. Moreover, 
in Strategy~\ref{stratbc2}, there is a missing color 
on the second peg. So the strategy is also a valid strategy for 
$(a',b,c)$-Mastermind. 

\item $b+c \equiv 0 \pmod{3}$. 

Note that  $\lfloor \frac{2}{3}(b+c - 1) \rfloor = 
1 + \lfloor \frac{2}{3}(b+c-2) \rfloor$. 
So the strategy for $(a',b,c)$-Mastermind 
needs one more question than the strategy for $(a',b',c')$-Mastermind. 
Using  Remark~\ref{rem_increase} we know that we 
can obtain such a strategy by duplicating 
the last question and replacing the third peg by the color $c$. 
\end{itemize}

Therefore, we have strategies for $(a',b,c)$-Mastermind 
using $\lfloor \frac{2}{3}(b+c - 1) \rfloor$ questions.
If $a < a'$, by using Lemma~\ref{lem_proj} and by noticing that it can be 
applied to the 
strategy of Table~\ref{tabbcmod2}(c) for $(2,4,4)$-Mastermind and whenever a sequence of four questions 
of the same type occurs in a strategy, which happens in Strategy \ref{stratbc2} when $c \geq 5$, we can 
obtain strategies for $(a,b,c)$-Mastermind using $\lfloor \frac{2}{3}(b+c - 1) \rfloor$ questions. 
This matches with the lower bound and finishes the proof.
\end{proof}

In summary, we have obtained the following explicit strategy.

\begin{strategy}
({$\lfloor \frac{2}{3} (b+c-1) \rfloor$}-strategy for $(a,b,c)$-Mastermind with $3a < b+c$ and $ 2b  > c$)
\label{strat4}

\begin{enumerate}[topsep=0pt]
\item First peg: colors
$1, \, 1,\, 2, \, 2, \, \dots, \, a-1, \, a-1, \, a, \, a, \, \dots, a $, 

except in three cases:

$(a,b,c)\in\{(2,3,4),(2,4,4)\} $: 
colors $1, \, 1, \, 1, \, 2$,

$ (a,b,c) = (2,4,5)$:  colors $1, \, 1, \, 1, \, 2, \, 2$.

(Note that the triples $(1,1,3)$ and $(1,2,4)$ leading to the exceptional triple
$(1,2,3)$ do not fall in the case $3a<b+c$ and $ 2b > c $.)

\pagebreak[3]

\item For $ b+c = 2 \pmod{3}$: 
\begin{itemize}
    \item Second and third peg: use the same colors as in Strategy~\ref{stratbc2}.
\end{itemize}
\item For $ b+c = 1 \pmod{3}$: 
\begin{itemize}
    \item Set $b':=b+1$.
    \item Second and third peg: use the same colors as in Strategy~\ref{stratbc2} for $(a,b',c)$-Mastermind.
\end{itemize}

\pagebreak[3]

\item For $ b+c = 0 \pmod{3}$: 
\begin{itemize}
    \item Set $c':=c-1$.
    \item Second and third peg: use the same colors as in Strategy~\ref{stratbc2} for $(a,b,c')$-Mastermind.
    \item Add the question $ (a,x+1,c)$.
\end{itemize}
\end{enumerate}

\end{strategy}

One further example is given in Table~\ref{tabbcmod2}(d). 

\begin{table}[H]
\resizebox{\linewidth}{!}{
\setbox9=\hbox{\begin{tabular}{@{}c@{}}
    \subcaptionbox[table]{$(a,b,c)=(2,3,3)$, $3$~questions.}{
            \centering
                \begin{tabular}{c || c | c | c || c }
                    Peg & $1$ & $2$ & $3$ & Type
                    \\
                    \hline
                    \hline
                    $q_1$ & $1$ & $1$ & $1$ & C 
                    \\
                    \hline
                    $q_2$ & $1$ & $2$ & $2$ & B 
                    \\
                    \hline
                    $q_3$ & $2$ & $1$ & $2$ & A 
                \end{tabular}}
                \\\\
                \begin{tabular}{c || c | c | c  || c}
                    Peg & $1$ & $2$ & $3$ & Type
                    \\
                    \hline
                    \hline
                    $q_1$ & $1$ & $1$ & $1$ & C
                    \\
                    \hline
                    $q_2$ & $1$ & $2$ & $2$ & C 
                    \\
                    \hline
                    $q_3$ & $2$ & $2$ & $3$ & C 
                    \\
                    \hline
                    $q_4$ & $2$ & $3$ & $4$ & B 
                    \\
                    \hline
                    $q_5$ & $3$ & $1$ & $4$ & A 
                \end{tabular}
\end{tabular}}
\setcounter{subtable}{0}
	\subcaptionbox[table]{$(a,b,c)=(5,8,9)$, yielding $x=4$ and $y=6$ and thus $10$ questions.}{
            \centering
    		\raisebox{\dimexpr\ht9-\height}[\ht9][\dp9]{
 \begin{tabular}{c || c | c | c || c}
 Peg & $1$ & $2$ & $3$ & Type
 \\
 \hline
 \hline
 $q_1$ & $1$ & $1$ & $1$ & $B$
 \\
 \hline
 $q_2$ & $1$ & $2$ & $2$ & $B$
 \\
 \hline
 $q_3$ & $2$ & $3$ & $2$ & $B$
 \\
 \hline
 $q_4$ & $2$ & $4$ & $1$ & $B$
 \\
 \hline
 \hline
 $q_5$ & $3$ & $5$ & $3$ & $C$
 \\
 \hline
 $q_6$ & $3$ & $6$ & $4$ & $C$
 \\
 \hline
 $q_7$ & $4$ & $6$ & $5$ & $C$
 \\
 \hline
 $q_8$ &  $4$ & $7$ & $6$ & $C$
 \\
 \hline
 $q_9$ & $5$ & $7$ & $7$ & $C$
 \\
 \hline
 $q_{10}$ & $5$ & $5$ & $8$ & $C$
 \end{tabular}}}
\quad
\subcaptionbox[table]{$(a,b,c)=(2,4,4)$, yielding $x=2$ and $y=2$ and thus $4$~questions.}{
\centering
\raisebox{\dimexpr\ht9-\height}[\ht9][\dp9]{
\begin{tabular}{@{}c@{}}
    \subcaptionbox[table]{$(a,b,c)=(1,2,3)$, yielding $x=0$ and $y=2$ and thus $2$~questions.}{
            \centering
                \begin{tabular}{c || c | c | c }
                    Peg & $1$ & $2$ & $3$ 
                    \\
                    \hline
                    \hline
                    $q_1$ & $1$ & $1$ & $1$ 
                    \\
                    \hline
                    $q_2$ & $1$ & $1$ & $2$ 
                \end{tabular}}
                \\\\
                \begin{tabular}{c || c | c | c }
                    Peg & $1$ & $2$ & $3$ 
                    \\
                    \hline
                    \hline
                    $q_1$ & $1$ & $1$ & $1$ 
                    \\
                    \hline
                    $q_2$ & $1$ & $2$ & $1$ 
                    \\
                    \hline\hline
                    $q_3$ & $1$ & $3$ & $2$ 
                    \\
                    \hline
                    $q_4$ & $2$ & $3$ & $3$ 
                \end{tabular}
\end{tabular}}}
\quad
	\subcaptionbox[table]{$(a,b,c)=(4,8,10)$, yielding $x=4$ and $y=6$ and thus 
    $11$~questions.}{
            \centering
    		\raisebox{\dimexpr\ht9-\height}[\ht9][\dp9]{
                    \begin{tabular}{c || c | c | l }
                    Peg & $1$ & $2$ & $3$ 
                    \\
                    \hline
                    \hline
                    $q_1$ & $1$ & $1$ & $1$
                    \\
                    \hline
                    $q_2$ & $1$ & $2$ & $2$
                    \\
                    \hline
                    $q_3$ & $2$ & $3$ & $2$
                    \\
                    \hline
                    $q_4$ & $2$ & $4$ & $1$
                    \\
                    \hline
                    \hline
                    $q_5$ & $3$ & $5$ & $3$
                    \\
                    \hline
                    $q_6$ & $3$ & $6$ & $4$
                    \\
                    \hline
                    $q_7$ & $4$ & $6$ & $5$
                    \\
                    \hline
                    $q_8$ &  $4$ & $7$ & $6$
                    \\
                    \hline
                    $q_9$ & $4$ & $7$ & $7$
                    \\
                    \hline
                    $q_{10}$ & $4$ & $5$ & $8$
                    \\
                    \hline
                    \hline
                    $q_{11}$ & $4$ & $5$ & $10$ 
                    \end{tabular}}}}
 \caption{Four examples of strategies for $(a,b,c)$-Mastermind with $3a < b+c$ and $2b > c $.}
 \label{tabbcmod2}
\end{table}

\section{Discussion and Future Work}
\label{sec_future}

In this paper we have determined the metric dimension of $K_a \times K_b \times K_c$ for $a,b,c \in \NN$.
The only not completely solved case is $3a>b+c$, 
where two values  $\left\lfloor \frac{a+b+c}{2} \right\rfloor -1 $
and $\left\lfloor \frac{a+b+c}{2} \right\rfloor $ are possible, see Table~\ref{sum_tab1}.

Thus, we performed an exhaustive search by our computer program~\cite{SC22}
on all of those triples $ (a,b,c)$ with $a+b+c \le 21$, for which $f(a,b,c) $ has not been obtained by the theoretical results of this paper,
namely $3a>b+c$ and $ (a,b,c) \not \in \{(a,a,a), (a,a,2a-1)\}$.
More concretely, these are the following triples: 
\begin {eqnarray*}
(3,3,4), (3,4,4), (4,4,5), (4,4,6), (4,5,5),  (4,5,6),
\\
(5,5,6), (5,6,6), (5,
5,7), (5,5,8),  (5,6,7),  (5,6,8),
\\
(5,7,7), (6,6,7),  (6,6,8), (6,7,7), (6,6,9), (6,7,8).
\end{eqnarray*}

The computer runs showed that for all these cases, the upper bound is attained.
Thus, we make the following conjecture:

\begin{conjecture}
Let $a,b,c \in \NN$ with $a \leq b \leq c$. If $3a > b+c$ then it holds that $f(a,b,c) = \left \lfloor \frac{a+b+c}{2} \right \rfloor$.
\end{conjecture}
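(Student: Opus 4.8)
The plan is to prove the conjecture by establishing the matching lower bound $f(a,b,c)\ge\left\lfloor\frac{a+b+c}{2}\right\rfloor$, since the upper bound $f(a,b,c)\le\left\lfloor\frac{a+b+c}{2}\right\rfloor$ is already supplied by Theorem~\ref{thm:3a_larger} (valid whenever $3a\ge b+c$). Writing $S=a+b+c$, it therefore suffices to rule out a feasible strategy $Q$ with $|Q|=\lfloor S/2\rfloor-1$ under the hypothesis $3a>b+c$; equivalently, to show that the value $k=1$ in the proof of Theorem~\ref{thm_nearopt} cannot occur in this regime.

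First I would run the counting of Theorem~\ref{thm_nearopt} in reverse to extract the \emph{extremal structure} forced by $k=1$. Let $m_1,m_2,m_3\in\{0,1\}$ record whether a color is missing on each peg (at most one each, by Lemma~\ref{lem-forbidden}\ref{lem-forbiddena}), and set $M=m_1+m_2+m_3$. Combining the sharpened occurrence bounds $x\ge 2a-2m_1-|Q|$ (and analogues for $y,z$) with the caps on single-color questions from Lemma~\ref{lem-forbidden}\ref{lem-forbiddenb}--\ref{lem-forbiddene} shows that $k=1$ is numerically possible only for restricted pairs $(M,\,S\bmod 2)$: when $S$ is odd it forces $M=3$, equality in every occurrence bound, and $x+y+z=|Q|$. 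Hence every question is exactly of type $A$, $B$ or $C$, each non-singleton color occurs exactly twice, there is precisely one missing color on each peg, and the type counts are pinned to $x=\frac{3a-b-c-1}{2}$, $y=\frac{3b-a-c-1}{2}$, $z=\frac{3c-a-b-1}{2}$ with $z\ge y\ge x\ge 0$. It is worth stressing that $3a>b+c$ is \emph{exactly} the inequality making $x\ge 0$, so the count is internally consistent and cannot by itself yield a contradiction: the obstruction must be structural. The even-$S$ case, where $M\in\{1,2,3\}$ becomes admissible, would be handled in parallel with the same bookkeeping and looser tightness.

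The argument then splits according to how concentrated the type counts are. In the \emph{imbalanced} sub-regime, one type dominates strongly enough (e.g.\ $z$ large relative to $x+y$) that the $z$ questions of type~$C$ cannot all be separated by the few questions of the other types: a pigeonhole count on the paths and cycles of the strategy graph forces either a sequence $CCC$ or a double-neighboring $CC$-pair. Since $M=3$ puts a missing color on both double pegs of type~$C$, the former realizes pattern~3 of Lemma~\ref{lem:super_lemma} (which, unlike pattern~2, genuinely destroys feasibility) and the latter triggers Lemma~\ref{lem-forbidden}\ref{lem-forbiddenf}/\ref{lem-forbiddeng}; either way we obtain a contradiction. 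This step essentially generalizes Case~3 of the proof of Theorem~\ref{thm_aa2a-1}, and I expect it to dispose of all cases in which some $x,y,z$ exceeds a small absolute constant, leaving only finitely many triples with $c$ bounded, to be cleared by the exhaustive search of~\cite{SC22}.

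The hard part — and the reason the statement remains a conjecture — is the \emph{balanced} sub-regime near $a\approx b\approx c$ (already visible at $(a,a,a)$, where $x=y=z=\frac{a-1}{2}$). There the three types occur in comparable numbers, so the questions can be threaded as $ABCABC\dots$, keeping every maximal monochromatic run of length at most~$2$; pattern~3 and the double-neighboring lemmas never fire, and the pigeonhole argument above collapses. In this regime one must instead exhibit an indistinguishable pair of secrets directly, built from the three missing colors together with the colors flanking the short runs (in the spirit of the pair $(2,1,2),(1,3,3)$ of Table~\ref{tab_projection}), and show that \emph{no} arrangement of the strategy graph separates it. Concretely, the plan is to generalize the bespoke lower-bound argument used by~\cite{JD20} for $(a,a,a)$ to the full range $3a>b+c$, most likely by proving a new structural lemma that strengthens Lemma~\ref{lem-forbidden} for short runs adjacent to missing colors on both double pegs; feeding such a lemma back into the count of Theorem~\ref{thm_nearopt} would push $x+y+z$ below the threshold required for $k=1$ and close the proof. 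Producing this lemma uniformly across the balanced cases is the main obstacle, and the computer verification for $a+b+c\le 21$ in~\cite{SC22} is what guides which configurations it must exclude.
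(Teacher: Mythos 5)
There is a genuine gap here, and it is one you yourself acknowledge: the statement you are addressing is the paper's open \emph{conjecture}, not a theorem. The paper offers no proof of it --- only the upper bound of Theorem~\ref{thm:3a_larger}, the lower bound of Theorem~\ref{thm_nearopt} (which falls short by exactly $1$), and an exhaustive computer search over the finitely many unresolved triples with $a+b+c\le 21$. Your proposal is a research plan rather than a proof, and its load-bearing step is explicitly deferred. The preliminary bookkeeping you do is sound and worth having: for $S=a+b+c$ odd, rerunning the counting of Theorem~\ref{thm_nearopt} with $k=1$ does force exactly one missing color on each peg, every question of type $A$, $B$ or $C$, and the type counts $x=\frac{3a-b-c-1}{2}$, $y=\frac{3b-a-c-1}{2}$, $z=\frac{3c-a-b-1}{2}$, and your observation that $3a>b+c$ is precisely the condition $x\ge 0$ correctly explains why no purely numerical contradiction is available. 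But from that point on nothing is proved. The ``imbalanced'' pigeonhole step is asserted, not carried out: you would need an actual count over the paths and cycles of the strategy graph showing that a $CCC$ run or a double-neighboring $CC$ pair is unavoidable, together with a quantification of which triples this covers; ``some $x,y,z$ exceeds a small absolute constant'' is not a statement one can check. And the ``balanced'' regime --- which contains the cases $(a,a,a)$ and $(a,a,2a-1)$ where the paper \emph{does} prove the lower bound, by bespoke arguments (see the proof of Theorem~\ref{thm_aa2a-1}) that do not obviously generalize --- is left entirely open, resting on a ``new structural lemma'' that is named but never stated, let alone proved.

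Two smaller points. First, the even-$S$ case is waved away as ``handled in parallel with the same bookkeeping and looser tightness,'' but looser tightness is exactly what makes it harder: with $M\le 2$ admissible the extremal structure is no longer pinned down, so the case analysis genuinely branches there. Second, your use of pattern~3 of Lemma~\ref{lem:super_lemma} as an \emph{obstruction} to feasibility relies on the converse direction of that lemma; the paper only remarks that this converse ``is easy to see'' and never proves it, so a complete argument would need to supply that implication (or route the contradiction through Lemma~\ref{lem-forbidden} instead, as the proof of Theorem~\ref{thm_aa2a-1} does). In short: the framing and the extremal-structure extraction are correct and consistent with what the paper knows, but the conjecture remains unproved by this proposal, for essentially the same reason it remains unproved in the paper.
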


Naturally, one aim of our future work is to prove or disprove this conjecture and close the remaining gap of this work.






\section*{Acknowledgement}

We would like to thank Frank Drewes for many helpful discussions and proofreading of this paper.

This research was supported by the Kempe Foundation Grant No.~JCK-2022.1 (Sweden).

\bibliographystyle{elsarticle-num-names} 
\bibliography{main}

\end{document}